\documentclass[reqno]{amsart}
\usepackage{amsmath}
\usepackage{amssymb}
\usepackage{amsthm}
\usepackage{youngtab}
\usepackage{cases}
\usepackage{stmaryrd}
\usepackage{mathrsfs}
\usepackage{hyperref}
\usepackage{mathtools}
\mathtoolsset{showonlyrefs,showmanualtags}
\numberwithin{equation}{section}
\pagestyle{plain}
\title{Schur partition theorems via perfect crystal}

\author{Shunsuke Tsuchioka}
\address{\mbox{Department of Mathematical and Computing Sciences, Tokyo Institute of Technology, Tokyo, Japan}}
\email{tshun@kurims.kyoto-u.ac.jp}

\author{Masaki Watanabe}
\address{Graduate School of Mathematical Sciences, University of Tokyo, Tokyo, Japan (current affiliation : Preferred Networks, Inc. Chiyoda-ku, Tokyo, Japan)}
\email{masakiw04@gmail.com}

\date{Feb 11, 2024}
\keywords{Rogers-Ramanujan identities,
quantum groups,
modular representation theory,
affine Lie algebras,
vertex operators}
\subjclass[2020]{Primary~11P84, Secondary~05E10}

\newtheorem{Thm}{Theorem}[section]
\newtheorem{Def}[Thm]{Definition}

\newtheorem{Prop}[Thm]{Proposition}
\newtheorem{Lem}[Thm]{Lemma}

\newtheorem{Rem}[Thm]{Remark}
\newtheorem{Cor}[Thm]{Corollary}
\newtheorem{Ex}[Thm]{Example}

\usepackage{graphicx}
\usepackage{tikz}

\def\node#1#2{\overset{#1}{\underset{#2}{\circ}}}

\def\ver#1#2{\overset{{\llap{$\scriptstyle#1$}\displaystyle\circ{\rlap{$\scriptstyle#2$}}}}{\scriptstyle\vert}}

\usetikzlibrary{arrows}
\tikzstyle{every picture}+=[remember picture]
\tikzstyle{na} = [baseline=-.5ex]
\tikzstyle{mine}= [arrows={angle 90}-{angle 90},thick]

\def\Llleftarrow{%
\lower2pt\hbox{\begingroup
\tikz
\draw[shorten >=0pt,shorten <=0pt] (0,3pt) -- ++(-1em,0) (0,1pt) -- ++(-1em-1pt,0) (0,-1pt) -- ++(-1em-1pt,0) (0,-3pt) -- ++(-1em,0) (-1em+1pt,5pt) to[out=-105,in=45] (-1em-2pt,0) to[out=-45,in=105] (-1em+1pt,-5pt);
\endgroup}
}

\def\Rrrightarrow{\vcenter{\hbox{\rotatebox{180}{\Llleftarrow}}}}

\newcommand{\hooklongrightarrow}{\lhook\joinrel\longrightarrow}

\DeclareMathOperator*{\restprod}%
{\mathchoice{\ooalign{\ensuremath{\displaystyle\prod}\crcr\ensuremath{\displaystyle\coprod}}}%
  {\ooalign{\ensuremath{\textstyle\prod}\crcr\ensuremath{\textstyle\coprod}}}%
  {\ooalign{\ensuremath{\scriptstyle\prod}\crcr\ensuremath{\scriptstyle\coprod}}}%
  {\ooalign{\ensuremath{\scriptscriptstyle\prod}\crcr\ensuremath{\scriptscriptstyle\coprod}}}%
}

\makeatletter
\renewcommand{\boxed}[2][\fboxsep]{{%
  \setlength{\fboxsep}{#1}\fbox{\m@th$\displaystyle#2$}}}
\makeatother

\DeclareMathOperator{\CL}{cl}
\newcommand{\SPE}{$\circ$}
\newcommand{\SEIGEN}{{\times'}}

\newcommand{\PRODPERFCRY}{\otimes'_{j\geq 1}S_p^{[(j-1)p+1,jp-1]}}

\newcommand{\CHECK}{\vee}
\newcommand{\RANGLE}{\rangle_{\CL}}
\newcommand{\AIZ}{i_0}
\newcommand{\HEC}{\mathcal{H}}
\newcommand{\FS}{Z}
\newcommand{\FSS}{W}
\newcommand{\CIR}{\circ}
\newcommand{\FI}{\mathbb{F}}
\newcommand{\FIF}[1]{\mathbb{F}_{#1}}

\newcommand{\GOOD}{\beta}
\newcommand{\LANGLEL}{\langle\!\langle}
\newcommand{\RANGLER}{\rangle\!\rangle}
\newcommand{\GOODZERO}[1]{\beta_{#1}}

\newcommand{\NONAME}[2]{\Theta_{#1}^{#2}}

\newcommand{\ORD}{\preceq}
\newcommand{\FORB}{\mathsf{Forb}}
\newcommand{\PARITY}{\mathsf{parity}}
\newcommand{\FORBB}[1]{\mathsf{Forb}^{\mathsf{str}}_{#1}}
\newcommand{\LONGHOOKRIGHTARROW}{\hooklongrightarrow}

\newcommand{\PAR}{\mathsf{Par}}
\newcommand{\STRICT}{\mathsf{Str}}
\newcommand{\SCHUR}{\mathsf{Schur}}

\newcommand{\PCL}{P_{\textrm{cl}}}
\newcommand{\PCLL}{P_{\normalfont{\textrm{cl}}}}
\newcommand{\PCLC}{P^{\CHECK}_{\textrm{cl}}}
\newcommand{\MC}{\mathcal{C}}
\newcommand{\MD}{\mathcal{D}}

\newcommand{\PFN}{\mathsf{Pfn}}
\newcommand{\KE}{\tilde{e}}
\newcommand{\KF}{\tilde{f}}

\newcommand{\AFP}[1]{\overline{\mathbb{F}}_{#1}}
\newcommand{\DSYM}[1]{\widehat{\mathfrak{S}}_{#1}}
\newcommand{\SYM}[1]{\mathfrak{S}_{#1}}

\newcommand{\KRC}[2]{B^{#1,#2}}
\newcommand{\KRM}[2]{W^{#1,#2}}
\newcommand{\KEP}[1]{\tilde{e}'_{#1}}
\newcommand{\KFP}[1]{\tilde{f}'_{#1}}

\newcommand{\MM}[1]{\nu_{#1}}
\newcommand{\MMM}[1]{\nu'_{#1}}

\newcommand{\SC}[1]{D^{\mathsf{odd}}_{#1}}
\newcommand{\SCC}[1]{D^{\mathsf{str}}_{#1}}
\newcommand{\AD}[1]{(A^{(2)}_{#1})^{\dagger}}
\newcommand{\NODE}[1]{\boxed[.4\fboxsep]{\mathstrut{#1}}}

\newcommand{\AR}[3]{{#1}\stackrel{#2}{\to}{#3}}
\newcommand{\LONGTO}{\longrightarrow}

\newcommand{\RES}[3]{{#1}|_{[{#2},{#3}]}}
\newcommand{\CONCAT}[2]{{{#1}\sqcup{#2}}}
\newcommand{\STHREE}[2]{\textrm{(S3)}_{#1,#2}}
\newcommand{\SSTHREE}[2]{\textup{(S3)}${}_{#1,#2}$}
\newcommand{\SSSTHREE}[2]{\normalfont{(S3)}${}_{#1,#2}$}

\newcommand{\SSONE}[2]{\textup{(S1)}\phantom{${}_{#1,#2}$}}
\newcommand{\SSTWO}[2]{\textup{(S2)}\phantom{${}_{#1,#2}$}}
\newcommand{\CTWO}[1]{\textrm{(C2)}_{#1}}
\newcommand{\CONE}[1]{\textrm{(C1)}_{\phantom{#1}}}
\newcommand{\CTHREE}[1]{\textrm{(C3)}_{#1}}
\newcommand{\CFOUR}[1]{\textrm{(C4)}_{#1}}

\newcommand{\EXS}[1]{Y_{#1}}
\newcommand{\SFOUR}[2]{\textrm{(S4)}_{#1,#2}}
\newcommand{\SSFOUR}[2]{\textup{(S4)}${}_{#1,#2}$}
\newcommand{\SSSFOUR}[2]{\normalfont{(S4)}${}_{#1,#2}$}
\newcommand{\SFOURC}{\textrm{(S4)}}

\newcommand{\SFIVE}[2]{\textrm{(S5)}_{#1,#2}}
\newcommand{\SSFIVE}[2]{\textup{(S5)}${}_{#1,#2}$}
\newcommand{\SSSFIVE}[2]{\normalfont{(S5)}${}_{#1,#2}$}
\newcommand{\SFIVEC}{\textrm{(S5)}}

\newcommand{\KEO}[1]{\tilde{e}_{1,#1}}
\newcommand{\KFO}[1]{\tilde{f}_{1,#1}}
\newcommand{\KET}[1]{\tilde{e}_{2,#1}}
\newcommand{\KFT}[1]{\tilde{f}_{2,#1}}
\newcommand{\KNB}[2]{\mathbb{B}_{#1}^{#2}}

\newcommand{\SET}{\mathsf{Set}}
\newcommand{\SETS}{\SET_{\ast}}
\newcommand{\PT}{\stackrel{\mathsf{PT}}{\sim}}
\newcommand{\ISOM}{\stackrel{\sim}{\longrightarrow}}
\newcommand{\LONGMAPSTO}{\longmapsto}
\newcommand{\ZERO}{\boldsymbol{0}}
\newcommand{\Z}{\mathbb{Z}}
\newcommand{\Q}{\mathbb{Q}}
\newcommand{\DIAMOND}{\diamond}
\newcommand{\DIAMONDD}{\bullet}
\newcommand{\INVMAP}[1]{\Psi_{#1}}
\newcommand{\INVMAPSTR}[1]{\Psi^{\mathsf{str}}_{#1}}
\newcommand{\AI}[1]{I_{#1}}
\newcommand{\AII}[1]{I^{\circ}_{#1}}

\newcommand{\CONC}{C}
\newcommand{\CONCC}{C'}

\DeclareMathOperator{\SHIFT}{\mathsf{shift}}

\DeclareMathOperator{\HOM}{Hom}
\DeclareMathOperator{\RANK}{rank}
\DeclareMathOperator{\GCD}{gcd}
\DeclareMathOperator{\NOT}{\mathsf{Not}}

\DeclareMathOperator{\IRR}{Irr}

\DeclareMathOperator{\WT}{\mathsf{wt}}
\DeclareMathOperator{\RP}{\mathsf{RP}}
\DeclareMathOperator{\REG}{\mathsf{Reg}}
\DeclareMathOperator{\CREG}{\mathsf{CReg}}
\DeclareMathOperator{\CG}{\mathsf{CG}}
\DeclareMathOperator{\AUT}{Aut}
\DeclareMathOperator{\BETAA}{\Omega}
\DeclareMathOperator{\BETAAA}{\Upsilon}
\DeclareMathOperator{\ID}{\mathsf{id}}
\DeclareMathOperator{\RAD}{\mathsf{rad}}
\DeclareMathOperator{\CHAR}{\mathsf{char}}

\begin{document}
\maketitle

\begin{abstract}
  Motivated by spin modular representations 
  of the symmetric groups, 
  we propose two generalizations of the Schur regular partitions for an odd integer $p\geq 3$. One forms a subset of the set of $p$-strict partitions, and the other forms that of strict partitions. We prove that each set has a basic $A^{(2)}_{p-1}$-crystal structure. For $p=3$, it reproves Schur's 1926 partition theorem, a mod 6 analog of Rogers-Ramanujan partition theorem (RRPT).  For $p=5$, it gives a computer-free proof of a conjecture by Andrews during his 3-parameter generalization of RRPT, which was first proved by Andrews-Bessenrodt-Olsson. 
\end{abstract}

\section{Introduction}\label{intro}
The set of partitions is denoted by $\PAR$. For a partition $\lambda=(\lambda_1,\lambda_2,\dots,\lambda_{\ell})\in\PAR$, we define
$|\lambda|=\lambda_1+\cdots+\lambda_{\ell}$, $m_i(\lambda)=|\{1\leq j\leq\ell\mid\lambda_j=i\}|$ for $i\in\Z$, and $\ell(\lambda)=\ell$. 
The symbol $\varnothing$ is reserved for the empty partition. 
For $a\geq 1$, we define
\begin{enumerate}
\item $\STRICT_a$, the set of $a$-strict partitions (i.e., $m_i(\lambda)>1$ implies $i\in a\Z_{\geq 1}$),
\item $\REG_a$, the set of $a$-regular partitions (i.e., $m_i(\lambda)<a$ for $i\geq 1$),
\item $\CREG_a$, the set of $a$-class regular partitions (i.e., $m_{ai}(\lambda)=0$ for $i\geq 1$),
\end{enumerate}
and define $\STRICT=\REG_2$ to be the set of strict partitions (a.k.a., distinct partitions).


\begin{Def}\label{PSI}
We say that a partition $\lambda$ satisfies the $p$-Schur inequality if
we have
$\lambda_i-\lambda_{i+h}\geq p$ for $1\leq i\leq\ell(\lambda)-h$, and the inequality is strict if $\lambda_i\in p\mathbb{Z}$.
\end{Def}

For $a\geq 1$, an $a$-step shift 
of $\lambda$
is defined as $\SHIFT^k_a(\lambda)=(\lambda_1+ka,\dots,\lambda_\ell+ka)$, where $k\in\mathbb{Z}$ and $\lambda_{\ell}+ka>0$. 
We say that $\lambda$ contains a subpattern $\mu=(\mu_1,\dots,\mu_{\ell'})\in\PAR$ if there exists 
$0\leq i\leq \ell-\ell'$ such that $\lambda_{i+j}=\mu_{j}$ for $1\leq j\leq \ell'$.

\begin{Def}\label{DEFSP}
For an odd integer $p=2h+1\geq 3$, we denote by $S_p$ the set of $p$-strict partitions
$\lambda\in\STRICT_p$ which satisfy the $p$-Schur inequality
and do not contain any subpatterns $\SHIFT^k_p(\mu)$, where $\mu\in \FORB_p$ and $k\geq 0$.
\end{Def}

Here, $\FORB_p$ is a finite set
consisting of the following partitions.
\begin{center}
\begin{tabular}{ccc}
$(p+1,p-1)$, & $(h+1,h)$, & $(2p+1,\ast^h,p-1)$, \\
$(p+2,\ast,p-2)$, & $(h+2,\ast,h-1)$, & $(2p+2,\ast^{h+1},p-2)$, \\
$\vdots$ & $\vdots$ & $\vdots$ \\
  $(p+h-1,\ast^{h-2},p-(h-1))$, &
  $(2h-1,\ast^{h-2},2)$, &
  $(2p+h-1,\ast^{2h-2},p-(h-1))$.
\end{tabular}
\end{center}
A wild card $\ast$ is a placeholder where we can fill any integer without violating $p$-strictness, and $\ast^b$ stands for the $b$-repetition of $\ast$.
Note that $\FORB_3=\emptyset$.

\subsection{The main results}
We have two goals.
One is to prove Theorem \ref{maintheorem}, and the other is
to define $\SCHUR_p\subseteq\STRICT$ which has similar properties (see Theorem \ref{maintheorem3}).

\begin{Def}[{\cite[Definition 3]{An3}}]\label{PTDEF}
  We say that subsets $\MC,\MD\subseteq\PAR$ are partition theoretically equivalent (written as $\MC\PT\MD$) if $|\MC(n)|=|\MD(n)|$ for $n\geq 0$, 
  where $\MC(n)=\{\lambda\in\MC\mid|\lambda|=n\}$ is the set of partitions of $n$ in $\MC$, etc.
\end{Def}


\begin{Thm}\label{maintheorem}
For an odd integer $p\geq 3$, we have $S_p\PT\SC{p}$ (resp. $S_p\PT\SCC{p}$).
\end{Thm}

Here, $\SC{p}$ (resp. $\SCC{p}$) is the set of odd (resp. strict) $p$-class regular partitions,
i.e., $\SC{p}=\CREG_2\cap\CREG_p$ (resp. $\SCC{p}=\STRICT\cap\CREG_p$).
Note that it is not difficult to see $\SC{p}\PT\SCC{p}$ for an odd integer $p\geq 3$.

The partition theorems in the main results are related to the Rogers-Ramanujan identities
and spin modular representations of the symmetric groups, as we will see in \S\ref{anRRPT} and in \S\ref{expschur}, \S\ref{brakcr}, respectively.
The statements for $p=3,5$ are known 
and located in a distinguished place in the theory of partitions~\cite[\S6]{An2}~\cite{An4}. 

\subsection{Andrews' 3-parameter generalization of RRPT}\label{anRRPT}
Throughout the paper, we abbreviate Rogers-Ramanujan (resp. partition theorem) with RR (resp. PT).

In the first letter to Hardy, Ramanujan wrote an astonishing formula
\begin{align*}
\cfrac{1}{1+\cfrac{e^{-2\pi}}{1+\cfrac{e^{-4\pi}}{\vdots}}}
=\left(\sqrt{\frac{5+\sqrt{5}}{2}}-\frac{\sqrt{5}+1}{2}\right)e^{2\pi/5},
\end{align*}
which is now known as
the RR continuous fraction.
For the history, we refer ~\cite[\S7]{An1},~\cite[\S1]{An2}, ~\cite{Har} and ~\cite{Sil}.
A proof can be reduced to the RR identities~\cite{Rog}.
As observed by Schur and MacMahon, they are equivalent to RRPT.


\begin{Thm}[RRPT]\label{RRPT}
We have $R\PT T^{(5)}_{1,4}$ and $R'\PT T^{(5)}_{2,3}$, where
\begin{align*}
R &= \{\lambda\in \PAR\mid
 \textrm{$\lambda_i-\lambda_{i+1}\geq 2$ for $1\leq i<\ell(\lambda)$}\},\\
R' &= \{\lambda\in R\mid m_1(\lambda)=0 \textrm{ (i.e., $\lambda_{\ell(\lambda)}\geq 2$ if $\lambda\ne\varnothing$)}\}, \\
T^{(N)}_{a,\dots,b} &= \{\lambda\in\PAR\mid \textrm{$m_i(\lambda)>0$ implies $i\equiv a,\dots,b\!\!\pmod{N}$}\}.
\end{align*}
\end{Thm}


Based on a classical method~\cite[p.1037]{An3}, which utilizes
the 2-parameter generating functions, 
Andrews established a 3-parameter RRPT.
The parameter $\ell=0$, $k=2$, $a=2$ (resp. $a=1$) duplicates $R\PT T^{(5)}_{1,4}$ (resp. $R'\PT T^{(5)}_{2,3}$).

\begin{Thm}[{\cite{An4}}]\label{AndrewGen}
For $\ell,k,a\geq 0$ with $0\leq \ell/2<a\leq k\geq \ell$, we have $A_{\ell,k,a}\PT B_{\ell,k,a}$, 
where $B_{\ell,k,a} = \{\lambda\in\STRICT_{\ell+1}\mid\normalfont{\text{(B1), (B2)}}\}$, and 
\begin{align*}
A_{\ell,k,a} = \begin{cases}
\{\lambda\in\STRICT_{\ell+1}\mid\normalfont{\textrm{(A1)}}\} & \textrm{if $\ell$ is even}, \\
\{\lambda\in\STRICT_{(\ell+1)/2}\mid\normalfont{\text{(A1), (A2)}}\} & \textrm{if $\ell$ is odd}.
\end{cases}
\end{align*}
\begin{enumerate}
\item[(A1)] $m_i(\lambda)=0$ for $i\equiv 0,\pm(2a-\ell)(\ell+1)/2\pmod{(2k-\ell+1)(\ell+1)}$.
\item[(A2)] $m_i(\lambda)=0$ for $i\equiv \ell+1\pmod{2(\ell+1)}$.
\item[(B1)] $\lambda_i-\lambda_{i+k-1}\geq \ell+1$ for $1\leq i\leq \ell(\lambda)-(k-1)$, and the inequality is strict 
if $\lambda_i\in(\ell+1)\Z$.
\item[(B2)] $\sum_{i=1}^{\ell+1}m_i(\lambda)\leq a-1$ and $\sum_{i=j}^{\ell-j+1}m_i(\lambda)\leq a-j$ for $1\leq j\leq (\ell+1)/2$.
\end{enumerate}
\end{Thm}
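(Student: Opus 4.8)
The plan is to run the classical three-step analytic scheme of \cite[pp.1037]{An3} recalled just above, treating $A_{\ell,k,a}$ as the ``easy'' side and $B_{\ell,k,a}$ as the ``linked'' side. First note that $A_{\ell,k,a}$ is cut out purely by congruence restrictions on the part sizes: (A1) deletes the residues $0,\pm(2a-\ell)(\ell+1)/2$ modulo $(2k-\ell+1)(\ell+1)$, and for odd $\ell$ the condition (A2) further deletes $\ell+1$ modulo $2(\ell+1)$. Apart from the $(\ell+1)/2$-strictness clause in the odd case, $A_{\ell,k,a}$ is thus a partition ideal of order $1$ in the sense of \cite[\S2]{An3}, so $f_{A_{\ell,k,a}}(1,q)$ is an explicit infinite product (an eta-quotient), which I would write down at the outset.

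The core is the analysis of the two-variable generating function $f_{B_{\ell,k,a}}(x,q)$ of \eqref{twogen}. Since $B_{\ell,k,a}$ is \emph{linked} in the sense of \cite[\S4]{An3}, I would (for fixed $\ell$, $k$) introduce the family $C_a(x):=f_{B_{\ell,k,a}}(x,q)$ with the truncation parameter $a$ allowed to vary, and decompose a partition counted by $C_a$ by either subtracting $\ell+1$ from every part or peeling off the block of parts of size $\le\ell+1$, tracking how (B1) --- in particular its ``$>$ at multiples of $\ell+1$'' clause --- and the truncated condition (B2) behave under these moves. This should produce a closed system of $q$-difference equations relating the $C_a(x)$ among themselves and to the shift $x\mapsto xq^{\ell+1}$, with boundary conditions at the extreme values of $a$ coming from Step 1.

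The third step is to verify that this system has a unique solution once the constant term in $q$ of each $C_a$ is fixed to $1$, and then to exhibit the two-variable refinement of the product from Step 1 --- the appropriate multiple $q$-series of Andrews--Gordon type for even $\ell$, and its G\"ollnitz--Gordon-type analogue for odd $\ell$ --- which manifestly satisfies the same $q$-difference equations and the same boundary conditions. Comparing the two solutions and specializing $x=1$ then yields $p_n(A_{\ell,k,a})=p_n(B_{\ell,k,a})$ for every $n$, i.e.\ $A_{\ell,k,a}\PT B_{\ell,k,a}$. (One could instead hunt for a direct bijection, e.g.\ by the Alladi--Gordon method of weighted words or a Bressoud-type recursive insertion, but the $q$-difference route is the most economical.)

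I expect the main obstacle to be Step 2: arranging the peeling-and-shift moves so that the recurrences genuinely close up. The clause ``$\geq$ is $>$ when $\lambda_i\in(\ell+1)\Z$'' in (B1) forces one to carry congruence information about each part through every move, and the secondary inequalities (B2) must be propagated consistently; pinning down the exact indexing of the $C_a$ and the boundary cases --- so that no term is lost or double-counted --- is the delicate bookkeeping. A second, more routine, point is the genuinely different flavour of the odd-$\ell$ case, where only multiples of $(\ell+1)/2$ may repeat and the extra restriction (A2) intervenes; there I would either adapt the scheme directly or first reduce to the even case via the classical dissection of $(\ell+1)/2$-strict partitions.
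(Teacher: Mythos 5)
First, a point of framing: the paper does not prove Theorem \ref{AndrewGen} at all --- it is quoted from Andrews' memoir \cite{An4}, and the surrounding text only records that Andrews obtained it by the three-step method of \cite[pp.1037]{An3} applied to a linked partition ideal. So there is no in-paper argument to compare against, and your proposal has to be measured against the cited source.

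Against that standard, you have correctly identified the strategy: $A_{\ell,k,a}$ is a partition ideal of order $1$ (note that even the $(\ell+1)$- or $(\ell+1)/2$-strictness clause is a condition on individual multiplicities, so all of $A_{\ell,k,a}$, not merely its congruence part, has a product generating function), $B_{\ell,k,a}$ is linked, and the two are to be matched through a system of $q$-difference equations for the two-variable functions $C_a(x)=f_{B_{\ell,k,a}}(x,q)$. This is indeed the route of \cite{An4}. However, what you have written is a plan, not a proof. Every step carrying mathematical content is deferred: the closed system of $q$-difference equations is never derived (you yourself flag the interaction of the strict inequality in (B1) at multiples of $\ell+1$ with the peeling move as the ``main obstacle''); the explicit Andrews--Gordon/G\"ollnitz--Gordon-type multiple series that is supposed to solve the same system is not exhibited; the uniqueness argument for the solution of the system is asserted but not set up; and the role of the hypotheses $0\leq\ell/2<a\leq k\geq\ell$ --- which are precisely what make the recurrences close and the product side come out right --- is never located. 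These are not routine verifications; they occupy the better part of Andrews' memoir. As it stands, the proposal establishes nothing beyond a correct guess at the method, so it cannot be accepted as a proof of the theorem.
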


Andrews observed that even if the condition $0\leq \ell/2<a\leq k\geq \ell$ is violated,
we still have a chance to have a PT $A_{\ell,k,a}\PT B^{\CIR}_{\ell,k,a}$ by defining a suitable 
$B^{\CIR}_{\ell,k,a}\subseteq B_{\ell,k,a}$. As an example, see ~\cite[p.84]{An4} for $\ell=3$, $k=a=2$.

\begin{Thm}[{\cite[Conjecture 2]{An4}}$=${\cite[Theorem 3.1]{ABO}}]\label{AndrewCon}
We have $(A_{4,3,3}=)\SCC{5}\PT B^{\CIR}_{4,3,3}$, where 
$B^{\CIR}_{4,3,3}:=\{\lambda\in\STRICT_5\mid\textrm{\normalfont{(C1), $\CTWO{j}$, $\CTHREE{j}$, $\CFOUR{j}$} for $j\geq 0$}\}$. 
\begin{enumerate}
\item[$\CONE{j}$]\label{ancon1} $\lambda_i-\lambda_{i+2}\geq 5$ for $1\leq i\leq \ell(\lambda)-2$, and the inequality is strict 
  if $\lambda_i\in 5\mathbb{Z}$.
\item[$\CTWO{j}$]\label{ancon2} $m_{5j+3}(\lambda)+m_{5j+2}(\lambda)\leq1$. 
\item[$\CTHREE{j}$]\label{ancon3} $m_{5j+6}(\lambda)+m_{5j+4}(\lambda)\leq1$. 
\item[$\CFOUR{j}$]\label{ancon4} $m_{5j+11}(\lambda)+m_{5j+10}(\lambda)+m_{5j+5}(\lambda)+m_{5j+4}(\lambda)\leq3$. 
\end{enumerate}
\end{Thm}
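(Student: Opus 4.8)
The plan is to obtain Theorem~\ref{AndrewCon} as the case $p=5$ of Theorem~\ref{maintheorem}, so that essentially all the work is to match the two sides of the asserted $\PT$ with the two sides of Theorem~\ref{maintheorem}. The left-hand side costs nothing: specializing Theorem~\ref{AndrewGen} to $\ell=4$, $k=a=3$, condition (A1) reads $m_i(\lambda)=0$ for $i\equiv 0,\pm5\pmod{15}$, that is for every $i\in5\Z$, and this together with $5$-strictness is exactly the defining condition of $\SC{5}$; hence $D_5=A_{4,3,3}=\SC{5}$.

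It remains to identify $B^{\CIR}_{4,3,3}$ with $S_5$, the $p=5$ instance of the family $S_p$ in Theorem~\ref{maintheorem}. Here $p=2h+1=5$ forces $h=2$, so the index $d$ in (S1)--(S5) takes only the value $d=1$, and the five conditions become: (S1) is ``$5$-strict''; (S2) is (C1); \SSTHREE{k}{1} forbids $\lambda_i=5k+1$, $\lambda_{i+1}=5k-1$; \SSFOUR{k'}{1} forbids $\lambda_i=5k'+3$, $\lambda_{i+1}=5k'+2$; and \SSFIVE{k}{1} forbids $\lambda_i=5k+6$, $\lambda_{i+3}=5k-1$. I would then rewrite each forbidden-pattern condition as a multiplicity inequality, using $5$-strictness (so $m_i(\lambda)\le1$ whenever $i\notin5\Z$) together with (C1). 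For \SSFOUR{k'}{1}: since $5k'+2,5k'+3\notin5\Z$, the pattern occurs iff both values are parts iff $m_{5k'+3}(\lambda)+m_{5k'+2}(\lambda)\ge2$, which is the failure of $\CTWO{k'}$. For \SSTHREE{k}{1}: (C1) already forbids $5k+1,5k,5k-1$ from all being parts (they would occupy three consecutive positions with total drop $\le2<5$), so if $5k+1$ and $5k-1$ are both parts then the larger is immediately followed by the smaller; hence \SSTHREE{k}{1} is equivalent to $m_{5k+1}(\lambda)+m_{5k-1}(\lambda)\le1$, that is to $\CTHREE{k-1}$. (This is the reformulation underlying Remark~\ref{anotherdescSp}.) Granting the analogous statement for \SSFIVE{k}{1}, one obtains $B^{\CIR}_{4,3,3}=S_5$, and Theorem~\ref{maintheorem} at $p=5$ then gives $|\PAR(n)\cap\SC{5}|=|\PAR(n)\cap S_5|$ for all $n\ge0$, that is $D_5\PT B^{\CIR}_{4,3,3}$.

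The main obstacle is exactly the equivalence of \SSFIVE{k}{1} with $\CFOUR{k-1}$. Unlike $\CTWO{j}$ and $\CTHREE{j}$, the condition $\CFOUR{j}$ bounds the sum $m_{5j+11}(\lambda)+m_{5j+10}(\lambda)+m_{5j+5}(\lambda)+m_{5j+4}(\lambda)$ by $3$, and two of these multiplicities, $m_{5j+5}$ and $m_{5j+10}$, sit at multiples of $5$ and hence are not a priori bounded by $1$; so $\CFOUR{j}$ cannot be read off a single consecutive-pair pattern. The argument must be local: using (C1) --- and, where it helps, $\CTWO{j}$ and $\CTHREE{j}$ --- one shows that among the parts with values in the window $\{5j+4,5j+5,5j+10,5j+11\}$ a fourth part forces the spread-out configuration $\lambda_i=5j+11$, $\lambda_{i+3}=5j+4$ of \SSFIVE{j+1}{1}, and conversely that this configuration forces a fourth such part; the bookkeeping with the refinement ``$\ge$ is $>$ when $\lambda_i\in5\Z$'' and with the small indices $k=1$, $k'=0$ is where I expect the check to be delicate, though it stays finite. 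An alternative would be Andrews' two-parameter generating-function machinery of Theorem~\ref{AndrewGen}, but its hypothesis $k\ge\ell$ fails for $(\ell,k,a)=(4,3,3)$ --- which is precisely why \cite{ABO} needed computer assistance --- so the passage through the crystal-theoretic Theorem~\ref{maintheorem} is the cleaner route.
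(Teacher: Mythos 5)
Your proposal is correct and is exactly the paper's own route: Theorem \ref{AndrewCon} is deduced by specializing Theorem \ref{maintheorem} to $p=5$ (so $h=2$ and only $d=1$ occurs) after identifying $D_5=A_{4,3,3}=\SC{5}$ and $B^{\CIR}_{4,3,3}=S_5$. The one equivalence you leave ``granted'' does close: under $5$-strictness and (C1) one has $m_{5k+5},m_{5k}\le 2$, and (C1) rules out every configuration with one of these equal to $2$ from reaching the sum $4$, so a failure of $\CFOUR{k-1}$ forces all four of $5k+6,5k+5,5k,5k-1$ to be parts of multiplicity one, which (C1), together with $\CTHREE{k}$ and $\CTHREE{k-1}$ already matched to \SSTHREE{k+1}{1} and \SSTHREE{k}{1}, pins to four consecutive positions --- precisely a violation of \SSFIVE{k}{1}, and conversely.
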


Note that $B^{\CIR}_{4,3,3}=S_5$ 
and $B_{4,3,3}=\{\lambda\in\STRICT_5\mid\textrm{(C1), $\CTWO{0}$}\}$.
Theorem \ref{AndrewCon} was proposed as a conjecture in the last section of ~\cite{An4} with
a concluding phrase, ``the methods of proof should have interesting ramifications in the theory of partition identities''.
Nearly 20 years later, Andrews-Bessenrodt-Olsson established Theorem \ref{AndrewCon}
by using a truncated approximation of the 2-parameter generating function.
The method is flexible (e.g., see an application to G\"ollnitz PT~\cite[\S10.6]{An2}), yet
it requires heavy computer usage (see formulas in ~\cite[\S3]{ABO}).

Theorem \ref{AndrewCon} is the same as Theorem \ref{maintheorem} for $p=5$. Thus, Theorem \ref{maintheorem} gives not only
a 
computer-free proof of Theorem \ref{AndrewCon}, but also
a definition of $B^{\CIR}_{p-1,h+1,h+1}:=S_p\subseteq B_{p-1,h+1,h+1}$ (see Proposition \ref{andsub}) for $p=2h+1\geq 3$ such that we have $(\SCC{p}=)A_{p-1,h+1,h+1}\PT B^{\CIR}_{p-1,h+1,h+1}(=S_p)$.


\subsection{Schur PT and $p$-modular representations of the Schur cover $\DSYM{n}$}\label{expschur}
We mention an incarnation of Theorem \ref{maintheorem} for $p=3,5$ other than RRPT.
In 1926, Schur found a PT, which is duplicated as Theorem \ref{AndrewGen} for $\ell=k=a=2$. 

\begin{Thm}[{\cite[Satz V]{Sch}}]\label{SchurPT}
  We have $S_{3}\PT\SCC{3}$.
\end{Thm}

For a proof by bijection, see ~\cite{Be1,Bre}. 
Because of $\FORB_3=\emptyset$, the statement $S_3\PT T^{(6)}_{1,5}(=\SC{3}\PT\SCC{3})$
can be regarded as a mod 6 analog of RRPT $R\PT T^{(5)}_{1,4}$.


Let $\SYM{n}=\AUT(\{1,\dots,n\})$ be the symmetric group of $n$ letters. 
For $\lambda\in\PAR$, one can construct a $\Z$-free of finite rank,  $\Z\SYM{n}$-module $S^{\lambda}$ with an $\SYM{n}$-invariant bilinear form
$\langle,\rangle:S^{\lambda}\times S^{\lambda}\to\Z$ (Specht module~\cite{Jam}). The construction gives bijections
\begin{align}
\begin{split}
\PAR(n) &\ISOM \IRR(\mathbb{Q}\SYM{n}),\quad\lambda\LONGMAPSTO \Q\otimes S^{\lambda}, \\
\REG_p(n) &\ISOM \IRR(\FIF{p}\SYM{n}),\quad\lambda\LONGMAPSTO D^{\lambda}_{\FIF{p}}:=(\FIF{p}\otimes S^{\lambda})/\RAD \langle,\rangle.
\end{split}
\label{parasym}
\end{align}
Here, $\IRR(\FI\SYM{n})$ for a field $\FI$ stands for the set of isomorphism classes of irreducible $\FI\SYM{n}$-modules.
The Glaisher bijection $\REG_p\PT\CREG_p$ restricted to a prime $p\geq 2$ can be understood as 
an incarnation of a theorem of Brauer (see ~\cite[(15.11)]{Isa}).

The parameterization \eqref{parasym} has nice features. For example, for a prime $p\geq 2$, 
the $p$-decomposition matrix of $\SYM{n}$ is regularized as the form
\begin{align*}
\begin{pmatrix}
1      & {}       & O \\
{}     & {\ddots} & {} \\
{\ast} & {}       & 1  \\ \hline
{} & {\ast}   & {}
\end{pmatrix},
\end{align*}
where the row is indexed by $\REG_p(n)$ in lexicographically decreasing order followed by $\PAR(n)\setminus\REG_p(n)$, and the column is again indexed by $\REG_p(n)$ in lexicographically decreasing order~\cite{FMP}. 

Let $\DSYM{n}$ be the Schur cover of $\SYM{n}$ (see ~\cite[\S1]{Ste}) 
with the short exact sequence
\begin{align*}
1
\LONGTO
\langle z\mid z^2=1\rangle
\LONGTO
\DSYM{n}
\LONGTO
\SYM{n}
\LONGTO
1.
\end{align*}

In the rest, $\IRR(\FI\DSYM{n})$ for a field $\FI$ with $\CHAR\FI\ne2$ stands for the set of isomorphism classes (modulo association, see ~\cite[\S4]{Ste}, ~\cite[\S2.5]{KKT}) 
of irreducible $\FI\DSYM{n}$-modules, where $z$  acts as $-1$.
Since Schur established the parameterization $\IRR(\mathbb{C}\DSYM{n})\cong \STRICT(n)$~\cite{Sch2}, it is natural to expect $\SCHUR_p\subseteq\STRICT$ with the following properties.
\begin{enumerate}
\item We have a bijection $\IRR(\AFP{p}\DSYM{n})\cong \SCHUR_p(n)$, which implies $\SCHUR_p\PT\SC{p}$ by a version of the Brauer theorem (see ~\cite[\S3]{Be2}).
\item The parameterization is nice, e.g., an analogous 
regularization result for the $p$-modular decomposition matrix of $\DSYM{n}$ holds modulo power of 2.
\end{enumerate}

Bessenrodt-Morris-Olsson showed that 
$\SCHUR_3=S_3$ satisfies the properties above
for $p=3$~\cite[Theorem 4.5]{BMO}. 
Furthermore, for $p=5$, they found a class $\SCHUR_5$.

\begin{Thm}[{\cite[\S5, Conjecture]{BMO}}$=${\cite[Theorem 3.1]{ABO}}]\label{BMOGen}
We have $\SCHUR_{5}\PT\SC{5}$, where 
$\SCHUR_5$ is a set of strict partitions $\lambda\in\STRICT$ such that
\begin{enumerate}
\item[(D1)] $\lambda_i-\lambda_{i+2}\geq 5$ for $1\leq i\leq \ell(\lambda)-2$, and the inequality is strict if $\lambda_i\in 5\mathbb{Z}$ or $\lambda_i+\lambda_{i+1}\in 5\mathbb{Z}$,
\item[(D2)] $\lambda$ does not contain $\SHIFT^k_5(\mu)$ for $k\geq 0$ and
$\mu\in\{(3,2), (11,9,5), (10,6,4), (11,10,5,4)\}$.
\end{enumerate}
\end{Thm}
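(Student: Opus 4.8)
The plan is to derive Theorem~\ref{BMOGen} from Theorem~\ref{maintheorem} specialized to $p=5$, together with one elementary combinatorial identification. Taking $p=5$ (hence $h=2$), Theorem~\ref{maintheorem} gives $\SC{5}\PT S_5$, so it will be enough to establish $S_5\PT\SCHUR_5$; by transitivity of $\PT$ this yields $\SC{5}\PT\SCHUR_5$, which is the assertion. First I would unwind Definition~\ref{sdef} at $p=5$: the range $1\le d<h$ collapses to the single value $d=1$, so $S_5$ is the set of partitions obeying (S1) ($5$-strictness), (S2) (the difference-two condition $\lambda_i-\lambda_{i+2}\ge 5$, with strict inequality when $\lambda_i\in 5\Z$), and the three families $\STHREE{k}{1}$ ($k\ge1$), $\SFOUR{k'}{1}$ ($k'\ge0$), $\SFIVE{k}{1}$ ($k\ge1$), which respectively forbid the consecutive pairs $(5k+1,5k-1)$, the consecutive pairs $(5k'+3,5k'+2)$, and the configuration $\lambda_i=5k+6$ with $\lambda_{i+3}=5k-1$.

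The core is an explicit bijection $\phi\colon S_5\to\SCHUR_5$, preserving $|\cdot|$ and length, that trades the only possible failure of strictness in a member of $S_5$ for a ``near-miss'' pair. Since (S2) bars three equal consecutive parts, a $\lambda\in S_5$ can repeat a value only at an isolated block $(\lambda_i,\lambda_{i+1})=(5t,5t)$ with $m_{5t}(\lambda)=2$; I would let $\phi(\lambda)$ be $\lambda$ with every such block replaced by $(5t+1,5t-1)$, and take as candidate inverse the reverse substitution $(5t+1,5t-1)\mapsto(5t,5t)$ performed at each such consecutive pair occurring in $\mu\in\SCHUR_5$ (distinct such pairs are automatically disjoint, as $5t-1\neq 5t'+1$). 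What makes this run is the identity $(5t+1)+(5t-1)=10t\in 5\Z$: the reinforced inequality of (D1) — precisely its extra clause $\lambda_i+\lambda_{i+1}\in 5\Z$ — forces a near-miss pair to obey exactly the spacing that a block $(5t,5t)$ obeyed through (S2). Dually, each BMO subpattern of (D2) is matched: $\SHIFT^{k'}_5((3,2))=(5k'+3,5k'+2)$ is literally the pair barred by $\SFOUR{k'}{1}$, while $\SHIFT^k_5((10,6,4))$, $\SHIFT^k_5((11,9,5))$ and $\SHIFT^k_5((11,10,5,4))$ cannot appear in $\phi(\lambda)$, because tracing which of their entries are original in $\lambda$ and which come from a replaced block shows their presence would force $\lambda$ to contain a configuration already excluded by (S2) or by $\SFIVE{k+1}{1}$.

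The step I expect to be the main obstacle is exactly this last verification: checking, uniformly in the shift index $k$, that $\phi$ and the reverse substitution send $S_5$ into $\SCHUR_5$ and $\SCHUR_5$ into $S_5$ respectively and are mutually inverse. The delicate points are (i) that a near-miss pair inside a $\SCHUR_5$-partition is never entangled with an adjacent near-miss pair in a way that would spoil the reversal — which is exactly what the $\lambda_i+\lambda_{i+1}\in 5\Z$ clause of (D1) governs — and (ii) the bookkeeping matching the four subpatterns in (D2) against the families $\STHREEC,\SFOURC,\SFIVEC$ and (S2). Once $S_5\PT\SCHUR_5$ is proved, combining it with $\SC{5}\PT S_5$ from Theorem~\ref{maintheorem} gives $\SC{5}\PT\SCHUR_5$, completing the proof. (The same scheme degenerates harmlessly at $p=3$: there $h=1$ makes $\STHREEC,\SFOURC,\SFIVEC$ vacuous while (S2) already forces all parts distinct, so $S_3=\SCHUR_3$ with no bijection needed, recovering Schur's Theorem~\ref{SchurPT}.)
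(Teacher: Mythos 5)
Your proposal is correct and follows essentially the same route as the paper: Theorem \ref{maintheorem} at $p=5$ gives $D_5\PT S_5$ (with $S_5=B^{\CIR}_{4,3,3}$), and the paper then invokes exactly your map $\phi$ — the size-preserving bijection $\BETAA_5$ of \S\ref{expschur} replacing each block $(5k,5k)$ by $(5k+1,5k-1)$, which is also $\BETAAA_{\GOODZERO{5}}$ of Example \ref{exschur} — to transport this to $\SCHUR_5$. The only difference is that you spell out the verification that $\phi$ lands in $\SCHUR_5$ and is invertible, which the paper leaves to the cited reference.
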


Theorem \ref{BMOGen} appeared as a concluding conjecture of ~\cite{BMO} with
a notice $(B^{\CIR}_{4,3,3}=)S_5\PT\SCHUR_5$ via a size preserving bijection
$\BETAA_5:S_5\ISOM\SCHUR_5$. Here, $\BETAA_5(\lambda)$ for $\lambda\in S_5$ is defined 
by replacing every occurrence of $(5k,5k)$ 
with $(5k+1,5k-1)$ for $k\geq 1$. Thus, Theorem \ref{AndrewCon} and Theorem \ref{BMOGen} are
equivalent. 
Furthermore, a weaker regularization result analogous to ~\cite{BMO} holds (see ~\cite[Theorem 3.10]{Be2}).
Subsequently,
a definition of $\SCHUR_7$ was searched, resulting in partial results (see ~\cite{ABO,Be2,LT}).


\subsection{Modular branching rules and Kashiwara crystal theory}\label{brakcr}
Lascoux-Leclerc-Thibon-Ariki theory (see ~\cite{Kl2} and references therein)
started from Leclerc's observation that 
Kleshchev's modular branching rule is an $A^{(1)}_{p-1}$-crystal isomorphism
\begin{align}
{\textstyle{\bigsqcup_{n\geq 0}}}\IRR(\FIF{p}\SYM{n})\cong \REG_p(\cong B(\Lambda_0)),\quad
D^{\lambda}_{\FIF{p}}\mapsfrom\lambda.
\label{MMrea}
\end{align}
Here, $p\geq 2$ is a prime, and $\REG_p$ has an $A^{(1)}_{p-1}$-crystal structure through Misra-Miwa realization~\cite{MM}.
For Kashiwara crystal theory, see a brief review in \S\ref{kcsec}.

Later, relying on a characterization of crystals due to Kashiwara-Saito~\cite[Proposition 3.2.3]{KS},
Grojnowski devised a method of establishing a connection between Kashiwara crystals and a tower of ``Hecke algebras'' $\{\HEC_n\}_{n\geq0}$~\cite{Gr1,GV}.
Based on that, Brundan-Kleshchev established a modular branching rule for $\DSYM{n}$.
\begin{Thm}[{\cite[Theorem 8.11]{BK}}]\label{BKthm}
The set $\bigsqcup_{n\geq 0}\IRR(\AFP{p}\DSYM{n})$ has an $A^{(2)}_{p-1}$-crystal structure (see Figure \ref{twisted}), which is isomorphic to $B(\Lambda_0)$
for an odd prime $p\geq 3$.
\end{Thm}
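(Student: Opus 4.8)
The plan is to realize $B:=\bigsqcup_{n\geq 0}\IRR(\AFP{p}\DSYM{n})$ as the crystal attached to a tower of algebras equipped with mutually biadjoint induction/restriction functors, and then to recognize it as $B(\Lambda_0)$ via Grojnowski's machine~\cite{Gr1} built on the Kashiwara--Saito recognition theorem~\cite[Proposition 3.2.3]{KS}. First I would trade the Schur covers for a more tractable tower: over the algebraically closed field $\AFP{p}$, the category of $\AFP{p}\DSYM{n}$-modules on which $z$ acts as $-1$ is superequivalent to the module category of the spin symmetric group superalgebra (Sergeev), which is in turn Morita superequivalent to the level-one cyclotomic quotient $\HEC^{\Lambda_0}_n$ of the affine Hecke--Clifford superalgebra $\HEC_n$ (Nazarov, Brundan--Kleshchev). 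Under this dictionary $\IRR(\AFP{p}\DSYM{n})$ is identified with $\IRR(\HEC^{\Lambda_0}_n)$ --- the ``type~$\mathtt{M}$ versus type~$\mathtt{Q}$'' bookkeeping of Clifford theory being exactly what is absorbed upon passing to association classes --- so that $B$ becomes $\bigsqcup_{n\geq 0}\IRR(\HEC^{\Lambda_0}_n)$.

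Next I would set up the Lie-theoretic skeleton. The algebra $\HEC_n$ carries commuting Jucys--Murphy elements $x_1,\dots,x_n$, and the Clifford relations make the $x_j^2$ the invariant datum; their eigenvalues on any finite-dimensional module lie in $\{q(i):i\in I\}$, where $I=\{0,1,\dots,h\}$ is the node set of $\mathfrak{g}:=A^{(2)}_{p-1}=A^{(2)}_{2h}$ and $q(0)=0$ is the special node. For $i\in I$ let $e_i$ be the functor sending an $\HEC^{\Lambda_0}_n$-module to the generalized $q(i)$-eigenspace of $x_n^2$ acting on its restriction to $\HEC^{\Lambda_0}_{n-1}$, and let $f_i$ be the matching $i$-induction. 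The first block of work is a categorification statement: $e_i,f_i$ are exact, carry irreducibles to modules with simple socle and simple head, and on $\bigoplus_{n\geq 0}\KK_0(\HEC^{\Lambda_0}_n)$ they induce the action of the derived algebra $\mathfrak{g}'$ integrating to the basic representation, with $\bigoplus_{n\geq 0}\KK_0(\HEC^{\Lambda_0}_n)\cong V(\Lambda_0)_{\Z}$ as $\mathfrak{g}'$-modules. This requires the Mackey filtration for $\operatorname{Res}\circ\operatorname{Ind}$ along the $\HEC_n$-tower, the intertwiner identities among the $x_j$ and the Coxeter/Clifford generators, and the computation of the Kostant $\Z$-form; the one genuinely special point is $i=0$, where the Clifford twist forces $e_0,f_0$ to obey the rank-one relations of $A^{(2)}_2$ rather than of $\mathfrak{sl}_2$, and this is exactly what makes the ambient type twisted.

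Then I would put a candidate crystal structure on $B$ and appeal to the recognition theorem. For $L\in B$ set $\KE_i L=\operatorname{soc}(e_iL)$, $\KF_i L=\operatorname{head}(f_iL)$, $\varepsilon_i(L)=\max\{k\geq 0:e_i^kL\neq 0\}$, $\varphi_i(L)=\varepsilon_i(L)+\langle\WT L,\alpha_i^{\CHECK}\rangle$, with $\WT L$ the weight recorded by the $x_j^2$-content, normalized so that $\WT$ of the trivial $\HEC^{\Lambda_0}_0$-module is $\Lambda_0$; at $i=0$ one also retains the divided-power operators $\KEP{0},\KFP{0}$ demanded by type $A^{(2)}_2$. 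The second block of work is to verify the axioms of the highest-weight-crystal form of~\cite[Proposition 3.2.3]{KS}: $\KE_i$ and $\KF_i$ are mutually inverse partial bijections on their domains (the \emph{refined branching rule}: $e_iL$ and $f_iL$ have socle isomorphic to head, and $\varepsilon_i$ changes by exactly one), the colour-$i$ string relations hold locally, including the $A^{(2)}_2$ version at $i=0$, and the separation axiom distinguishing $B(\Lambda_0)$ from its proper subcrystals holds because $\bigoplus_{n}\KK_0(\HEC^{\Lambda_0}_n)\cong V(\Lambda_0)_{\Z}$ pins down the character in the extremal layer of each weight space. Since $B$ is connected with top vertex the trivial $\DSYM{0}$-module, of weight $\Lambda_0$ and killed by every $\KE_i$, the recognition theorem yields $B\cong B(\Lambda_0)$, the edges being those of Figure~\ref{twisted}.

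The main obstacle is the second block, and inside it the node $i=0$: establishing the refined branching rule and the separation axiom requires the full representation theory of $\HEC_n$ --- the classification of its simple modules by Clifford analogues of Kleshchev's multisegments, the shuffle/Mackey computations that evaluate $\varepsilon_i$, and a careful treatment of self-associate modules at the special node, where the $\mathfrak{sl}_2$ heuristics break down and one must instead work with $\KEP{0},\KFP{0}$ and the $A^{(2)}_2$ string combinatorics. Everything else --- the super-Morita reduction, exactness and biadjunction of $e_i,f_i$, connectedness of $B$, and the weight bookkeeping --- is either formal or a direct eigenvalue computation.
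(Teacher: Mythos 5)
This statement is imported verbatim from \cite[Theorem 8.11]{BK}; the paper supplies no proof of its own, only the citation and a one-paragraph indication of the method (Grojnowski's machine built on \cite[Proposition 3.2.3]{KS}, applied to a tower of Hecke--Clifford-type superalgebras). Your outline faithfully reproduces the actual Brundan--Kleshchev strategy: the super-Morita reduction from $\AFP{p}\DSYM{n}$-modules to cyclotomic Hecke--Clifford (Sergeev) superalgebras, $i$-induction/restriction cut out by generalized eigenvalues of the squared Jucys--Murphy elements, the categorification $\bigoplus_n\KK_0\cong V(\Lambda_0)_{\Z}$, and the crystal recognition theorem, with the genuinely delicate point correctly located at the special node where the $A^{(2)}_2$ (rather than $\mathfrak{sl}_2$) string relations appear. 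As a blind reconstruction of the cited proof this is accurate in structure; of course the ``second block'' you flag (the refined branching rule, the shuffle/Mackey evaluation of $\varepsilon_i$, and the treatment of self-associate modules at $i=0$) is exactly the technical bulk of \cite{BK} and is asserted rather than carried out here.
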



The method does not require a detail of 
$\IRR(\HEC_n)$ in advance,
and starts from a branching rule of $\bigsqcup_{n\geq0}\IRR(\HEC_n)$ 
in a spirit similar to Okounkov-Vershik~\cite{OV}.
To get concrete as \eqref{MMrea},
Kyoto path realization via perfect crystal~\cite{KKMMNN2,KKMMNN} (see \S\ref{PERF}) is used. 
An application gives an $A^{(2)}_{p-1}$-crystal isomorphism
\begin{align*}
{\textstyle{\bigsqcup_{n\geq 0}}}\IRR(\AFP{p}\DSYM{n})\cong \RP_p(\cong B(\Lambda_0)),
\end{align*}
where $\RP_p$ is a set of $p$-strict $p$-restricted partitions in the sense of ~\cite[\S9-a]{BK}. Namely,
$\RP_p$ is a set of $p$-strict partitions $\lambda$ such that
$\lambda_i-\lambda_{i+1}\leq p$ for $1\leq i\leq\ell(\lambda)$, and
the inequality is strict if $\lambda_i\in p\Z$ (see also ~\cite{Kan}),
where we promise $\lambda_i=0$ for $i>\ell(\lambda)$.

The parameterization by $\RP_p$, proposed first in ~\cite[\S7]{LT}, was obtained by a different method~\cite{BK2}. 
Moreover, a certain regularization result holds~\cite{BK3}.
While the explanation so far 
may be a reason for the abandonment of searching for
the aforementioned $\SCHUR_p\subseteq\STRICT$,
it gives us a clue to prove Theorem \ref{maintheorem} as follows.

\begin{Thm}\label{maintheorem2}
For an odd integer $p=2h+1\geq 3$, the set $S_p$ has an $A^{(2)}_{p-1}$-crystal structure, which satisfies the two conditions below.
\begin{enumerate}
\item\label{maintheorem2a} $S_p$ is isomorphic to the basic crystal $B(\Lambda_0)$.
\item\label{maintheorem2b} $\KF_i\lambda$ is obtained by adding a residue $i$-box to $\lambda$ 
  if $\KF_i\lambda$ is defined for $\lambda\in S_p$ and $i\in \AI{p}=\{0,1,\dots,h\}$.
\end{enumerate}
\end{Thm}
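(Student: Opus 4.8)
The plan is to realize $S_p$ as an explicit subset of a tensor product of Kirillov--Reshetikhin crystals and then invoke the Kyoto path realization. Concretely, for $A^{(2)}_{p-1}$ with $p=2h+1$ one has a family of perfect crystals $B^{r,s}$; the right choice here is a single-column type crystal $\KRC{1}{s}$ (or a suitable product $\PRODPERFCRY$ over the ``blocks'' $[(j-1)p+1,\,jp-1]$ suggested by the macro names in the preamble), whose elements are described combinatorially by Kashiwara--Nakashima tableaux. First I would set up a bijection $S_p \ISOM \bigotimes'_{j\geq 1} S_p^{(j-1)p+1,\,jp-1}$ (a semi-infinite restricted tensor product, stabilizing to the ground-state path), under which a partition $\lambda\in S_p$ is cut into the ``windows'' $\lambda$ occupies in each interval $[(j-1)p+1, jp-1]\cup p\Z$, and each window is read off as a single tensor factor living in a finite perfect crystal. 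The conditions \SSONE{k'}{d}--\SSFIVE{k'}{d} are precisely engineered so that this map is well defined and bijective: \SSONE{k'}{d} and \SSTWO{k'}{d} control what can appear inside and across one window (this is the KR-crystal membership condition plus the tensor-product admissibility between adjacent factors), while the subpattern-avoidance conditions \SSTHREEC, \SSFOURC, \SSFIVEC remove exactly the ``non-highest-weight collisions'' that would otherwise make the map fail to land in the restricted tensor product. This identification is the technical heart and I expect it to be the main obstacle — one must check, window by window and pair-of-windows by pair-of-windows, that the five families of conditions match the combinatorics of $B^{r,s}$ and of $\KE_i, \KF_i$ on it, which is a finite but intricate case analysis depending only on $d$ with $1\leq d<h$ (so uniform in $p$).

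Granting that identification, part \eqref{maintheorem2a} follows formally. The Kyoto path realization~\cite{KKMMNN2,KKMMNN} states that for a perfect crystal $B$ of the appropriate level (level $1$ here, matching $\Lambda_0$), the set of $\lambda$-paths $\mathscr{P}(\Lambda_0, B)$ — eventually-ground-state semi-infinite tensor words in $B$ — carries an $A^{(2)}_{p-1}$-crystal structure isomorphic to $B(\Lambda_0)$. So it suffices to check that, under the bijection above, $S_p$ corresponds exactly to $\mathscr{P}(\Lambda_0, B)$ with the ground-state path being the image of the empty partition $\emptypartition$, and that the $\KE_i,\KF_i$ defined combinatorially on $S_p$ (via the usual signature/bracketing rule on $i$-strings, to be spelled out as in Definition~\ref{sdef}) agree with the crystal operators transported from the path model. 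The ``eventually ground-state'' condition translates to: all but finitely many windows of $\lambda$ are empty, which is automatic since $\lambda$ is a (finite) partition — this is why one gets all of $B(\Lambda_0)$ and not a proper sub-crystal. One also needs that the weight and the $\varepsilon_i,\varphi_i$ functions read off from the path match, but these are determined by the local data on each finite factor, so they reduce to the finite check already done.

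Part \eqref{maintheorem2b}, the statement that $\KF_i$ raises $|\lambda|$ by exactly $1$, is where a genuinely new verification is needed beyond abstract crystal theory, because the path model has no a priori notion of ``size $|\lambda|$''. The plan is to define, on the path side, a grading by declaring each tensor factor $b\in B^{r,s}$ to have a ``content'' $c(b)\in\Z_{\geq 0}$ equal to $|\cdot|$ of the corresponding window, and then to prove the \emph{uniform} statement: if $\KF_i b$ (in the full path, so possibly altering one factor) is defined then $c$ of that factor increases by $1$ for every $i\in\AI{p}$. Because $\KF_i$ changes a single entry of a single Kashiwara--Nakashima tableau by a ``$\pm1$ in the labels'' move, and because the windows are the intervals $[(j-1)p+1, jp-1]$ together with the multiplicity slots at multiples of $p$, one checks entry-by-entry that each elementary crystal move on $B^{r,s}$ shifts exactly one part of $\lambda$ up by one or toggles one multiplicity at a $p$-divisible part — hence $|\lambda|\mapsto|\lambda|+1$ in every case. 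This again is a finite check over $i\in\{0,1,\dots,h\}$ and over the possible local configurations; combined with \eqref{maintheorem2a} it gives Theorem~\ref{maintheorem}, since $B(\Lambda_0)$ has a unique element of each weight-level / each ``$n$'' once the size grading is pinned down, so $|S_p(n)| = |\RP_p(n)| = |\SC{p}(n)|$ by comparing with the known $\RP_p$-realization.
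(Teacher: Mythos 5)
Your overall architecture (cut $\lambda$ into windows, identify each window with an element of a perfect crystal, invoke the Kyoto path realization, then verify the size-increase property by a separate local analysis) is indeed the shape of the paper's argument. But two essential ingredients are missing or wrong, and they are exactly where the difficulty lives. First, the perfect crystal is not a single-row/column crystal $\KRC{1}{s}$: the paper uses the Kirillov--Reshetikhin crystal $\KRC{h}{2}$ for $\AD{p-1}$, realized as the Kashiwara--Nakashima set $\KNB{h}{h}\subseteq(\KNB{h}{1})^{\otimes h}$, and the choice matters --- taking $s=1$ in the same Kyoto-path setup produces the realization $B(\Lambda_0)\cong\RP_p$ ($p$-strict $p$-restricted partitions), not $S_p$. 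Second, and more seriously, the bijection $S_p\ISOM\PRODPERFCRY$ cannot be the naive restriction to the intervals $[(j-1)p+1,jp-1]$: parts of $\lambda$ divisible by $p$ (which may occur with multiplicity) do not lie in any such interval. The paper's resolution is the map $\lambda\mapsto(\lambda^+,\lambda^-)$ of Definition \ref{magicmap}, which splits each window $\RES{\lambda}{jp-h}{jp+h}$ centered at $jp$ into two halves by converting the multiplicity $m_{jp}(\lambda)$ into parts redistributed on either side of $jp$ (governed by the threshold $i_0(\lambda)$), together with the zigzag property (Theorem \ref{zigzagplus}) guaranteeing that this rearrangement preserves membership in $S_p$ across adjacent windows. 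Without this nonlocal rearrangement your map is not even well defined, and the claim that conditions \textup{(S1)}--\textup{(S5)} ``are precisely engineered'' to match tensor-product admissibility has no content: the matching happens only after the $\pm$-splitting and regrouping into the factors $S_p^{(j-1)p+1,jp-1}$, and proving it occupies \S\ref{zzsec}--\S\ref{pkcsec} of the paper.

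The same gap undermines your treatment of part (\ref{maintheorem2b}). Because the correspondence between a path and the partition $\lambda$ passes through the inverse map $\INVMAP{j}$, a single Kashiwara-operator move on one tensor factor does \emph{not} visibly ``shift one part up by one'': it changes $(\lambda^+,\lambda^-)$, and the effect on $\lambda=\INVMAP{j}(\lambda^+,\lambda^-)$ involves a possible change of the splitting threshold $j_0$ and of the multiplicity at $jp$. Proposition \ref{onebox} handles this by a case analysis on the functions $b_i$ and $j_0$ in all four tensor-rule cases, and Theorem \ref{finonebox} needs an additional argument for $i=h$ (where the operator moves a box between adjacent windows) showing $m_{jp+h}((\lambda^{(j)})^+)=0$ via the reduced signature. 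So the verification is finite, as you say, but it is a check on the composite with $\INVMAP{j}$, not an entry-by-entry check on Kashiwara--Nakashima tableaux. Finally, your closing deduction of Theorem \ref{maintheorem} via ``$B(\Lambda_0)$ has a unique element of each weight-level'' is not right as stated; the paper instead compares $|B(\Lambda_0)_n|$ either with the principally specialized character of $V(\Lambda_0)$ or with $|\RP_p(n)|$ using the known $\RP_p\PT\SC{p}$.
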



Though the proof of Theorem \ref{maintheorem2} is an application of Kyoto path realization, 
we would like to 
emphasize that the method is different from what a specialist might imagine. 
A natural way to realize an affine crystal as a subset of $\PAR$
uses a perfect crystal that does not have a branching vertex in the sense of ~\cite[\S1]{Tsu}. 
The aforementioned $A^{(2)}_{p-1}$-crystal isomorphism $\RP_p\cong B(\Lambda_0)$ is an example. 
However, 
Kirillov-Reshetikhin perfect crystal $\KRC{h}{2}$ for $\AD{p-1}$ (see \S\ref{PERF}), which we use for the proof, 
has many branching vertices. 
We remark that $\RP_p\PT S_p$ can be understood as a kind of transpose of Young diagrams (for $p=3$, see ~\cite[\S3]{Be1}).

\subsection{A definition of $\SCHUR_p$ and future directions}
\label{finalsubsection}
Note that $S_p$ is not a subset of $\STRICT$ for $p\ne3$.
We define a size and length preserving injection $\GOOD_p:S_p^{[p-h,p+h]}\hookrightarrow \STRICT^{[p-h,p+h]}$ for $p=2h+1\geq 3$ (for notation, see \S\ref{noco}), and 
define $\SCHUR_p\subseteq\STRICT$ as ``the image of $S_p$'' (see \S\ref{defsch1}, \S\ref{defsch2}).
This generalizes $\SCHUR_3=S_3$ and $\SCHUR_5$ (see \S\ref{exschur}). The results for $S_p$ have $\SCHUR_p$ versions as follows.
\begin{Thm}\label{maintheorem3}
  For an odd integer $p\geq 3$, $\SCHUR_p$ has the following properties.
  \begin{enumerate}
  \item\label{maintheorem3a} Any partition in $\SCHUR_p$ satisfies the $p$-Schur inequality.
  \item\label{maintheorem3b} $\SCHUR_p$ is characterized in terms of pattern avoidance, i.e., there is a finite subset $\FORBB{p}\subseteq\STRICT$ such that
    $\lambda\in\STRICT$ belongs to $\SCHUR_p$ if and only if
    $\lambda$ does not contain
any subpatterns $\SHIFT^k_p(\mu)$, where $\mu\in \FORBB{p}$ and $k\geq 0$.
    \item\label{maintheorem3c} Theorem \ref{maintheorem2} holds when replacing $S_p$ with $\SCHUR_p$ in the statement.
  \end{enumerate}
\end{Thm}

In particular, we have $\SCHUR_p\PT S_p$ (see also Remark \ref{remscpPT}), and $\IRR(\AFP{p}\DSYM{n})$ has a parameterization by $\SCHUR_p(n)$ compatible with the modular branching rule.
It is interesting if a regularization theorem holds for this label
(see also ~\cite[\S8.4]{Cra}).

Finally, it is natural to ask if we can interpret $\SCHUR_p$ by Lepowsky-Wilson machinery~\cite{LW}, which gives a Lie theoretic interpretation of RRPT by the level 3 standard modules of the affine Lie algebra of type $A^{(1)}_1$. A conjectural statement would be that the set of $Z$-monomials
\begin{align*}
\{Z_{-\lambda_1}\cdots Z_{-\lambda_{\ell}}v_0\mid (\lambda_1,\dots,\lambda_{\ell})\in\SCHUR_p\}
\end{align*}
is a basis of the vacuum space $\Omega(V(\Lambda_0+\Lambda_{p-1}))$ with respect to the principal Heisenberg subalgebra for the affine Lie algebra of type $D^{(2)}_{p}$ (see Figure \ref{twisted}, regarding $D_3=A_3$).
Here, $v_0$ is a highest weight vector of the standard module $V(\Lambda_0+\Lambda_{p-1})$, and we put $Z_{2n}=Z_{2n}(\alpha_p)$ and $Z_{2n+1}=Z_{2n+1}(\alpha_{1})$ for $n\in\mathbb{Z}$ in the notation of ~\cite[\S9.3(ii)(3)]{Fig} and ~\cite[\S2]{Tsu2} (See also ~\cite{Ito} on a study for $A^{(2)}_{\textrm{odd}}$ level 2).
It is interesting if  $\SCHUR_p$ is related to $D^{(2)}_p$ through vertex operators, as well as $A^{(2)}_{p-1}$ through Kashiwara crystals.


\subsection{Notations and Conventions}\label{noco}
Let $a\leq b$ integers. For $\MC\subseteq\PAR$, we define $\MC^{[a,b]}=\{\lambda\in \MC\mid \textrm{$m_i(\lambda)>0$ implies $a\leq i\leq b$}\}$. For $\lambda\in\PAR$, we denote by $\RES{\lambda}{a}{b}$ a partition $\mu$ defined by $m_i(\mu)=m_i(\lambda)$ if $a\leq i\leq b$, and $m_i(\mu)=0$ if otherwise.

For $\lambda\in \PAR^{[a,b]}$ and $\mu\in\PAR^{[c,d]}$ such that $[a,b]\cap[c,d]=\emptyset$, we denote by
$\lambda\sqcup\mu$ a partition $\nu$ such that
$m_i(\nu)=m_i(\lambda)+m_i(\mu)$
for $i\geq 1$.

For $\lambda\in\STRICT$ and $i\geq 1$, $i\not\in\lambda$ (resp. $i\in\lambda$) stands for $m_i(\lambda)=0$ (resp. $m_i(\lambda)=1$), and $\lambda\sqcup\{i\}$ (resp. $\lambda\setminus\{i\}$) means a strict partition $\nu$ such that $m_j(\nu)=m_j(\lambda)+\delta_{i,j}$ (resp. $m_j(\nu)=m_j(\lambda)-\delta_{i,j}$) for $j\geq 1$. 

For a sequence $(\MC_n)_{n\geq1}$ with 
$\varnothing\in\MC_n\subseteq\PAR$, a restricted direct product 
$\times'_{n\geq1}\MC_n=\cdots\SEIGEN\MC_2\SEIGEN\MC_1$ stands for 
the set $\{(\lambda_n)_{n\geq 1}\in\prod_{n\geq 1}\MC_n\mid
\textrm{$\lambda_n=\varnothing$ for sufficiently large $n$}\}$.


\hspace{0mm}

\noindent{\bf Organization of the paper.} In \S\ref{schurdef}, 
we study $S_p(\subseteq\STRICT_p)$ introduced in Definition \ref{DEFSP}.
The zigzag property in \S\ref{zzsec} is a key result, and based on that, we put a precrystal structure on $S_p$ in \S\ref{pkcsec}.
Theorem \ref{finonebox} is another key result for the proof.
By these combinatorial results and Kashiwara crystal theory reviewed in \S\ref{kcsec}, 
we prove Theorem \ref{maintheorem2} in \S\ref{pfsec}, which implies Theorem \ref{maintheorem}. Finally, in \S\ref{defsch},
by modifying the map $\INVMAP{j}$ in \S\ref{magicmapsec},
we define $\SCHUR_p(\subseteq\STRICT)$, whose
representation theoretic motivation was explained in \S\ref{expschur}, and we prove Theorem \ref{maintheorem3}.

\hspace{0mm}

\noindent{\bf Acknowledgments.}
The authors thank Masato Okado for help them understand perfect crystals better.
The first (resp. second) author was
supported by the Research Institute for Mathematical
Sciences, an International Joint Usage/Research Center located in Kyoto
University, JSPS Kakenhi 26800005, 17K14154, 20K03506, 23K03051,
Inamori Foundation, JST CREST Grant Number JPMJCR2113, Japan and Leading Initiative for Excellent Young Researchers, MEXT, Japan (resp. JSPS Kakenhi 15J05373).


\section{Combinatorics of $S_p$}\label{schurdef}
\subsection{Definition and routine checks}
We begin by an equivalent definition of $S_p$. 

\begin{Def}\label{sdef}
For an odd integer $p=2h+1\geq 3$, the subset $S_p\subseteq\PAR$ consists of $\lambda\in\PAR$ with
the following conditions, where $1\leq d\leq h$, $0\leq d'\leq h$, $k\geq 1$, $k'\geq 0$.
\begin{center}
\begin{tabular}{ll}
\SSONE{k\phantom{'}}{d\phantom{'}} & $\lambda$ is $p$-strict (i.e., $m_{i}(\lambda)>1$ implies $i\in p\mathbb{Z}_{\geq 1}$).\\
\SSTWO{k\phantom{'}}{d\phantom{'}} & $\lambda_i-\lambda_{i+h}\geq p$ for $1\leq i\leq \ell(\lambda)-h$.\\
\SSTHREE{k\phantom{'}}{d\phantom{'}} & there is no $i$ such that $\lambda_i=pk+d,\lambda_{i+d}=pk-d$.\\
\SSFOUR{k'}{d\phantom{'}} & there is no $i$ such that $\lambda_i=pk'+h+d,\lambda_{i+d}=pk'+h+1-d$.\\
\SSFIVE{k\phantom{'}}{d'} & there is no $i$ such that $\lambda_i=pk+p+d',\lambda_{i+(h+d)}=pk-d'$.
\end{tabular}
\end{center}
\end{Def}

\begin{Rem}\label{redudef}
  Definition \ref{sdef} is equivalent to Definition \ref{DEFSP}. In fact,
  the $p$-Schur inequality is the same as \normalfont{(S2)} and \SSSFIVE{k}{0} for all $k\geq 1$.
Note also that \SSSTHREE{k}{h}, \SSSFOUR{k'}{h} and \SSSFIVE{k}{h} follow from \normalfont{(S2)}.
\end{Rem}


The following five lemmas are immediate.
Lemma \ref{invshift} (resp. Lemma \ref{invweakideal2}) states that $S_p$ is closed
under $p$-step shifts (resp. reversal).
Lemma \ref{weakideal} states that a weak form of closure property holds, although $S_p$ is not a partition ideal~\cite[\S2]{An3} if $p\geq 7$.


\begin{Lem}\label{ideal3}
We have $S_p^{[kp-h,kp-1]}=\STRICT^{[kp-h,kp-1]}$ for $k\geq 1$, and $S_p^{[jp+1,jp+h]}=\STRICT^{[jp+1,jp+h]}$ for $j\geq 0$.
\end{Lem}

\begin{Lem}\label{invshift}
We have $\SHIFT^k_p(\lambda)\in S_p$ for $\lambda\in S_p$ and $k\in\mathbb{Z}$ if defined.
\end{Lem}


\begin{Lem}\label{invweakideal2}
For $a\leq b$, $k\geq 1$ and $\lambda\in S^{[a,b]}_p$ such that $pk>\lambda_1$, 
we have $\mu\in S_p$, where $\mu$ is defined by 
$\ell(\mu)=\ell(\lambda)(=:\ell)$ and $\mu_i+\lambda_{\ell+1-i}=pk$ for $1\leq i\leq \ell$.
\end{Lem}


\begin{Lem}\label{weakideal}
For any integers $a\leq b$ and $\lambda\in S_p$, we have $\RES{\lambda}{a}{b}\in S_p$.
\end{Lem}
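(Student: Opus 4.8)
The plan is to prove that $S_p$ is closed under the restriction operation $\lambda\mapsto\RES{\lambda}{a}{b}$ by checking each of the five defining conditions \SSONE{}{}--\SSFIVE{}{} directly. Observe first that $\RES{\lambda}{a}{b}$ is obtained from $\lambda$ by deleting all parts outside the interval $[a,b]$; write $\mu=\RES{\lambda}{a}{b}$ and note that the multiset of parts of $\mu$ is a sub-multiset of that of $\lambda$, with the surviving parts appearing in the same relative order. Since \SSONE{}{} is a condition on multiplicities alone ($m_i(\mu)\leq m_i(\lambda)$), and \SSTHREE{}{},\SSFOUR{}{},\SSFIVE{}{} are non-existence (subpattern avoidance) conditions, these four conditions are inherited immediately: any violating configuration of parts in $\mu$ would, after reinserting the deleted parts, give a violating configuration in $\lambda$ — one only has to check that the index gaps required by \SSTHREE{}{},\SSFOUR{}{},\SSFIVE{}{} are preserved, and indeed deleting intervening parts can only increase the gap between two fixed surviving parts, never decrease it, so $\lambda_i = pk+d,\ \lambda_{i+d}=pk-d$ would force the corresponding surviving parts of $\mu$ to sit at positions $i'<j'$ with $j'-i'\leq d$; but here one must be slightly careful, since \SSTHREE{}{} etc. demand the gap be \emph{exactly} $d$, not at most $d$. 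This is where the work lies.

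To handle the subpattern conditions cleanly I would use the reformulation in Remark \ref{anotherdescSp}: the conditions \STHREEC,\SFOURC,\SFIVEC\ together forbid the existence of $1\leq i<j\leq\ell(\lambda)$ with $\lambda_i+\lambda_j\in p\Z$ and $\lambda_i-\lambda_j$ lying in a prescribed small set depending on the gap $j-i$. The key point is that this reformulated condition is \emph{monotone in the gap}: shrinking $j-i$ (which is exactly what deletion does) moves us from the second bullet ($h\leq j-i\leq 2h$, difference $=2(j-i)+1$) toward the first ($j-i<h$, difference $\in\{2(j-i),2(j-i)-1\}$), and a careful case analysis shows that the forbidden difference for a given sum $\lambda_i+\lambda_j$ and a given gap is determined, so a pair that is \emph{legal} in $\lambda$ at gap $g$ cannot become an \emph{illegal} pair in $\mu$ at gap $g'<g$ — because illegality at gap $g'$ would, by the same monotonicity, already have made some pair in $\lambda$ illegal. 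Concretely, I would argue the contrapositive: suppose $\mu$ violates one of \SSTHREE{}{},\SSFOUR{}{},\SSFIVE{}{} via parts $\mu_{i'}=\lambda_{i},\ \mu_{j'}=\lambda_{j}$ (with $i<j$ the original positions, $i'<j'$ and $j'-i'\leq j-i$); then $\lambda_i+\lambda_j\in p\Z$ and $\lambda_i-\lambda_j$ is one of the small forbidden values attached to the gap $j'-i'$. Using that $\lambda$ is $p$-strict and satisfies (S2)' one deduces $\lambda_i-\lambda_j$ is also a forbidden value for the \emph{larger} gap $j-i$, contradicting $\lambda\in S_p$. The verification of (S2)' for $\mu$ is the easiest: $\mu_{i'}-\mu_{i'+h}$ bounds from below (after reinsertion) some $\lambda_i-\lambda_{i+h'}$ with $h'\leq h$, and since the parts of $\lambda$ between positions $i$ and $i+h$ are $\leq\lambda_i$, the required inequality $\mu_{i'}-\mu_{i'+h}\geq p$ follows from \SSTWO{}{} for $\lambda$ together with $p$-strictness; actually it is cleanest to invoke Lemma \ref{trivell} locally, or to note directly that any $h$ consecutive parts of $\mu$ are $h$ parts of $\lambda$ no two of which are more than $h-1$ apart, hence span at least $p$.

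The main obstacle, then, is the bookkeeping in the subpattern case: matching the precise forbidden-difference sets in Remark \ref{anotherdescSp} across the change of gap, and in particular ruling out that a pair sitting in the ``safe zone'' $j-i=h-1$ vs.\ $j-i=h$ (the boundary between the two bullets) slips into a forbidden configuration under deletion. I expect this to reduce, after the Remark \ref{anotherdescSp} reformulation, to a short finite check on residues mod $p$ and on the integer $\lambda_i-\lambda_j$, using only that $\lambda$ is $p$-strict (so consecutive equal parts are multiples of $p$, constraining how much the difference can change) and (S2)' (so the difference across $h$ steps is at least $p$, which caps how many distinct values $\lambda_i-\lambda_j$ can take for fixed endpoints). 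Once that monotonicity lemma is in hand, the proof of Lemma \ref{weakideal} is a two-line assembly: \SSONE{}{} by $m_i(\mu)\leq m_i(\lambda)$, \SSTWO{}{} as above, and \SSTHREE{}{}--\SSFIVE{}{} by the contrapositive argument.
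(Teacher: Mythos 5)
Your proposal reaches the right conclusion but misidentifies where (if anywhere) the difficulty lies, and the step you single out as ``where the work lies'' is left unproven. The key fact you have overlooked is that $\RES{\lambda}{a}{b}$ never deletes a part lying \emph{between} two surviving parts: if $\lambda_i,\lambda_j\in[a,b]$ and $i<m<j$, then $a\leq\lambda_j\leq\lambda_m\leq\lambda_i\leq b$, so $\lambda_m$ survives as well. Hence the surviving parts occupy a contiguous block of indices, i.e.\ $\mu_k=\lambda_{k+s-1}$ for all $k$ and some fixed offset $s$, and every index gap between surviving parts is preserved \emph{exactly} --- not merely bounded above or below. Since each of the five conditions of Definition \ref{sdef} is a statement about multiplicities or about pairs of parts at a prescribed exact index gap, all of them pass from $\lambda$ to $\mu$ by the tautological reindexing $i\mapsto i+s-1$. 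This is presumably why the paper states the lemma without proof.

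Your second and third paragraphs are therefore devoted to a non-issue: a pair that is legal in $\lambda$ at gap $g$ cannot reappear in $\mu$ at a smaller gap $g'<g$, because no intervening part is ever removed by restriction to a value interval. The ``monotonicity lemma'' you build the argument around is unnecessary, and since you explicitly defer its verification (``I expect this to reduce \dots to a short finite check''), your proposal is incomplete at exactly the point you declare to be the heart of the proof. Your treatment of the difference condition (S2)$'$ also has the comparison running the wrong way: if intervening parts could be deleted, positions $i'$ and $i'+h$ of $\mu$ would correspond to positions $i$ and $i+h'$ of $\lambda$ with $h'\geq h$ (not $h'\leq h$), and the closing deduction ``no two of which are more than $h-1$ apart, hence span at least $p$'' does not follow as stated. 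All of these issues evaporate once the contiguity observation is made, after which the lemma really is a two-line verification.
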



\begin{Lem}\label{invweakideal}
Let $\lambda\in\PAR$. If $\RES{\lambda}{(j-1)p-h}{jp+h}\in S_p$ for all $j\geq 1$, then we have $\lambda\in S_p$.
\end{Lem}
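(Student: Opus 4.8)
The plan is to prove the contrapositive: if $\lambda\notin S_p$, then some window $\RES{\lambda}{(j-1)p-h}{jp+h}$ already fails one of \SSONE{}{},\dots,\SSFIVE{}{} (using Definition \ref{sdef}, with \SONE{}{} read as $p$-strictness). The point is that each of the five defining conditions is \emph{local} in the sense that a violation involves only parts lying in a band of length at most $2p$, and such a band is always contained in one of the overlapping windows $[(j-1)p-h,\,jp+h]$, which have width $p+2h+1=2p$ and whose left endpoints advance by $p$; consecutive windows overlap in the interval $[jp-h,jp+h]$ of length $2h+1=p$, so the windows cover every integer band of length $\leq p$ without gaps. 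First I would record precisely this covering fact: for every pair of integers $a\leq b$ with $b-a\leq p$ there exists $j\geq 1$ with $(j-1)p-h\leq a\leq b\leq jp+h$ (take $j$ so that $jp$ is the multiple of $p$ nearest to the band, or the relevant one of the two nearest).

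Next I would go through the conditions one at a time, in each case locating the violation inside a single band of length $\leq p$ and invoking Lemma \ref{weakideal} to transfer it to the window. For \SSONE{}{}: if $m_i(\lambda)>1$ with $i\notin p\Z_{\geq1}$, the offending repeated part $i$ lies in the trivial band $[i,i]$, hence in some window, and $\RES{\lambda}{(j-1)p-h}{jp+h}$ still has $m_i>1$, contradicting its $p$-strictness. For \SSTHREE{}{},\SSFOUR{}{},\SSFIVE{}{}: each forbidden subpattern is a pair of parts $\lambda_i>\lambda_{i+d}$ (resp. $\lambda_i>\lambda_{i+h+d}$) with prescribed values; by Remark \ref{anotherdescSp} the two values $\lambda_i$ and $\lambda_{i+\ast}$ differ by at most $2h+1\leq p$ (indeed $\lambda_i-\lambda_j\in\{2(j-i),2(j-i)-1,2(j-i)+1\}$ and $j-i\leq 2h$, but in fact the largest gap occurring is $2h+1=p$, realized only in the \SSFIVE{}{} case with $d'=0$; one checks this gives exactly a band of length $p$). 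So both parts lie in a band of length $\leq p$, hence in a common window, and restricting to that window preserves all the relevant parts \emph{and their positions relative to one another} (Lemma \ref{weakideal} only deletes parts, so the index difference between two surviving parts can only shrink, which is exactly what we need: the pattern is then realized in the restricted partition possibly with a smaller gap $d$, but the conditions $\STHREEC,\SFOURC,\SFIVEC$ are required for \emph{all} $d$, so we still land in a forbidden configuration). I would state this monotonicity-of-index-gaps remark explicitly since it is what makes the restriction argument work.

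The subtle case, and the one I expect to be the main obstacle, is \SSTWO{}{} (the statement $\lambda_i-\lambda_{i+h}\geq p$), because a failure $\lambda_i-\lambda_{i+h}<p$ already constrains $h+1$ consecutive parts $\lambda_i\geq\lambda_{i+1}\geq\cdots\geq\lambda_{i+h}$ all lying within a band of length $<p$, and I must check that (a) this band sits inside a single window and (b) after restricting to that window, all $h+1$ of these parts survive and remain $h+1$-consecutive. Part (a) is fine since the band has length $\leq p$. For (b): since $\lambda_i-\lambda_{i+h}<p\leq 2h$ and the parts are weakly decreasing with at most the multiples of $p$ repeating (by $p$-strictness), the values $\lambda_i,\dots,\lambda_{i+h}$ all lie in $[\lambda_{i+h},\lambda_{i+h}+p-1]$; I need to argue that this interval is engulfed by one window, that no part of $\lambda$ \emph{outside} this interval but with index between $i$ and $i+h$ exists (there is none, by monotonicity), and hence $\RES{\lambda}{(j-1)p-h}{jp+h}$ retains exactly these $h+1$ parts consecutively, so it violates Lemma \ref{trivell} / \SSTWO{}{} via having a run of $h+1$ parts in a band of width $<p$ — wait, more carefully, it violates \SSTWO{}{} directly. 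The one thing to be careful about is the boundary: if $\lambda_{i+h}$ or $\lambda_i$ happens to straddle a window edge exactly, one picks the other of the two candidate windows; this is where the precise arithmetic of the covering fact is used, and I would dispatch it with a short explicit computation rather than a slick argument.
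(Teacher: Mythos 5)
The paper states this lemma without proof, and your overall strategy (contrapositive, locality of the five conditions, covering every relevant value band by one of the overlapping windows $[(j-1)p-h,jp+h]$) is certainly the intended one; your covering lemma for bands $[a,b]$ with $b-a\leq p$ is correct, and your treatment of \SSONE{}{}, \SSTWO{}{}, \SSTHREE{}{}, \SSFOUR{}{} goes through. However, there is a concrete error in the \SSFIVE{}{} case. You assert that the largest value gap in a forbidden pair is $2h+1=p$, ``realized only in the \SSFIVE{}{} case with $d'=0$.'' This is false for $p\geq 5$: the pattern $\SFIVE{k}{d'}$ involves $\lambda_i=pk+p+d'$ and $\lambda_{i+(h+d')}=pk-d'$, whose values differ by $p+2d'$, which exceeds $p$ for every $d'\geq 1$ (and $1\leq d'\leq h-1$ gives non-redundant conditions). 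Indeed, in the notation of Remark \ref{anotherdescSp} the gap is $2(j-i)+1$ with $j-i$ as large as $2h$, i.e.\ up to $2p-1$, not $2h+1$; you appear to have substituted $j-i\leq h$ where $j-i\leq 2h$ is what occurs. Consequently your covering lemma does not apply to these bands, and a generic band of length $p+2d'$ is genuinely not always contained in a window (e.g.\ $p=5$: the band $[6,13]$ lies in no window). The repair is easy but uses the arithmetic of the pattern rather than its length alone: the \SSFIVE{}{} band is $[pk-d',\,(k+1)p+d']$, which sits inside the single window $[kp-h,\,(k+1)p+h]$ precisely because $d'\leq h$; since all parts indexed between $i$ and $i+(h+d')$ have values in this band, they all survive the restriction and the violation persists.

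A secondary point: your ``index gaps can only shrink, and the conditions hold for all $d$'' fallback is both unnecessary and unsound. It is unnecessary because for each forbidden pattern every part between the two offending ones has value sandwiched between them, hence lies in the same band and survives, so the index gap is preserved exactly. It is unsound as a fallback because in $\STHREEC,\SFOURC,\SFIVEC$ the value offsets are tied to the index gap $d$ (e.g.\ $\STHREE{k}{d}$ requires $\lambda_i=pk+d$ \emph{and} $\lambda_{i+d}=pk-d$), so realizing the same two values at a smaller index gap $d''<d$ would not match the pattern $\STHREE{k}{d''}$. You should replace that discussion with the exact-preservation observation.
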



$\SFOURC$ and $\SFIVEC$ can be strengthened as follows.

\begin{Prop}\label{indnum}
Let $\lambda\in S_p$. We have 
\begin{enumerate}
\item\label{indnuma} $\sum_{k=jp+h+1-d}^{jp+h+d}m_k(\lambda)\leq d$ for $1\leq d\leq h$ and $j\geq 0$.
\item\label{indnumb} $\sum_{k=jp-d}^{jp+p+d}m_k(\lambda)\leq h+d$ for $0\leq d\leq h$ and $j\geq 0$.
\end{enumerate}
\end{Prop}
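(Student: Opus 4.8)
The plan is to deduce both bounds from the subpattern conditions $\SFOURC$ and $\SFIVEC$ together with $\SSTWO{}{}{}$, by an inductive argument on $d$. For part \eqref{indnuma}, fix $j\geq 0$ and consider the window of parts with values in $[jp+h+1-d,\,jp+h+d]$, which has $2d$ consecutive values, none of which lies in $p\Z$ (since $jp+h$ and $jp+h+1$ straddle no multiple of $p$ for $1\leq d\leq h$); hence by $\SSONE{}{}{}$ every such value occurs at most once, so $\sum_{k=jp+h+1-d}^{jp+h+d}m_k(\lambda)$ simply counts the distinct values of $\lambda$ in this window. First I would treat $d=1$: the values available are $jp+h$ and $jp+h+1$, and $\SFOUR{j}{1}$ forbids $\lambda_i=jp+h+1,\lambda_{i+1}=jp+h$, i.e. forbids both values occurring (as adjacent parts, which they must be since they are consecutive integers each appearing once), giving $\leq 1$. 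For the inductive step, suppose the bound holds for $d-1$ and that the window for $d$ contains $\geq d+1$ distinct values; then in particular both endpoints $jp+h+d$ and $jp+h+1-d$ occur, say as $\lambda_i$ and $\lambda_{i'}$. Using $\SSTWO{}{}{}$ (in the form of Lemma \ref{trivell} applied to $\RES{\lambda}{jp+h+1-d}{jp+h+d}$, whose value-spread is $<p$) one sees the parts in the window occupy at most $h$ consecutive rows, and a counting of how many rows separate $\lambda_i$ from $\lambda_{i'}$ shows $i'-i\leq d$; but then $\SFOUR{j}{d}$ (noting $\lambda_i=jp+h+d$ and $\lambda_{i+d}$, which is wedged between $\lambda_i$ and $\lambda_{i'}$, is forced to equal $jp+h+1-d$ by strictness and the pigeonhole on the $\geq d+1$ distinct values over $\leq d$ rows) is violated — contradiction.

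For part \eqref{indnumb}, the window $[jp-d,\,jp+p+d]$ for $0\leq d\leq h$ contains the single multiple of $p$, namely $jp$ (and, when $j\geq 1$, one could also worry about $jp+p$, but $jp+p\leq jp+p+d$ forces reconsideration — in fact for $d\leq h$ the window $[jp-d,jp+p+d]$ has length $p+2d+1\leq 2p$, so it meets $p\Z$ in at most the two points $jp$ and $jp+p$; I would split on whether the top endpoint reaches $jp+p$, which happens only for... ). Rather than belabor this, the cleaner route is: by $\SSONE{}{}{}$ all values in the window except multiples of $p$ occur at most once, and by $\SSTWO{}{}{}$ (Lemma \ref{trivell}) the sub-partition $\RES{\lambda}{jp-d}{jp+p+d}$ — whose spread $p+2d<2p$ is still less than... no, it can exceed $p$ — must be handled by splitting at $jp$: the parts $<jp$ form a $p$-strict partition in a window of spread $<p$, hence by Lemma \ref{trivell} there are $\leq h$ of them, and similarly $\leq h$ parts $>jp$; but this only gives $\leq 2h$, which is too weak. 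The improvement to $h+d$ comes from $\SFIVE{j}{d'}$ for $0\leq d'\leq d$, which forbids a part equal to $jp+p+d'$ sitting $h+d$ rows above a part equal to $jp-d'$; combined with the already-established part \eqref{indnuma} applied around $jp+h$ (if $d\geq 1$) and a careful bookkeeping of which of the $2h+1+2d$ available values can simultaneously occur without triggering some $\SFIVE{j}{d'}$, one extracts the bound $h+d$. The induction here again runs on $d$, with $d=0$ (the bound $\sum_{jp}^{jp+p}m_k\leq h$) being the base case coming directly from $\SSTWO{}{}{}$ plus the $p$-strictness of the endpoints.

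The main obstacle I anticipate is the bookkeeping in part \eqref{indnumb}: unlike \eqref{indnuma}, where the window avoids $p\Z$ so everything is genuinely strict and the $\SFOURC$ pattern pins down the offending rows cleanly, the window in \eqref{indnumb} straddles $jp$ (where $m_{jp}$ may be $\geq 2$), so the relationship between "number of distinct values present" and "number of parts" is looser, and the $\SFIVEC$ patterns come with a vertical gap of $h+d$ rows that must be reconciled with the $\leq h$-rows-per-spread-$<p$ estimate from Lemma \ref{trivell} on each side of $jp$. I would organize this by first proving the $d=0$ case and part \eqref{indnuma} in full, then running a simultaneous downward/upward induction: assuming $\sum_{jp-(d-1)}^{jp+p+(d-1)}m_k(\lambda)\leq h+d-1$, I add back the two boundary values $jp-d$ and $jp+p+d$ and show that if both occur then some $\SFIVE{j}{d'}$ (with $d'=d$, or a smaller $d'$ if an intermediate value is also forced) is violated; the technical heart is identifying, via $\SSTWO{}{}{}$, the precise row separation between an occurrence of $jp+p+d$ and an occurrence of $jp-d$, for which I expect to need the explicit gap count from Lemma \ref{trivell} applied to the restriction on the interval $[jp-d,\,jp+p+d]$ after possibly deleting copies of $jp$.
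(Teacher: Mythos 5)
Part (\ref{indnuma}) of your proposal is essentially the paper's argument and is correct: induct on $d$, use that the window avoids $p\Z$ so every value there has multiplicity at most one, and observe that if the $(d-1)$-window already carries $d-1$ parts and both new endpoints $jp+h+d$ and $jp+h+1-d$ occur, these two parts are forced to sit exactly $d$ rows apart, violating $\SFOUR{j}{d}$. Your detour through (S2)' and Lemma \ref{trivell} is unnecessary here --- the row separation is pinned down by the induction hypothesis and strictness alone --- but it does no harm.

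Part (\ref{indnumb}) contains a genuine gap, in two places. First, the base case $\sum_{k=jp}^{jp+p}m_k(\lambda)\leq h$ does not come ``directly from (S2)' plus the $p$-strictness of the endpoints'': if $h+1$ parts lie in a window of spread exactly $p$, then (S2)' only forces $\lambda_i-\lambda_{i+h}=p$ with $\lambda_i=jp+p$ and $\lambda_{i+h}=jp$, which is perfectly consistent with (S2)' as stated in Definition \ref{sdef}; one must invoke $\SFIVE{j}{0}$ (equivalently, the strict form of (S2) in Theorem \ref{maintheorem}) to exclude this configuration. Second, and more seriously, your inductive step claims that if both boundary values $jp-d$ and $jp+p+d$ occur then some $\SFIVE{j}{d'}$ is violated. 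That is false: for $p=5$, $j=d=1$ the partition $(11,4)\in S_5$ has both boundary values of the window $[4,11]$ and violates nothing. The correct dichotomy --- which is exactly the paper's and which dissolves all the ``bookkeeping'' you anticipate --- runs in parallel with part (\ref{indnuma}): the two values added at step $d$, namely $jp-d$ and $jp+p+d$, are not multiples of $p$ (since $1\leq d\leq h<p$) and hence each has multiplicity at most one; so either the $(d-1)$-window carries at most $h+d-2$ parts and the total stays $\leq h+d$, or it carries exactly $h+d-1$ parts and both new endpoints occur, in which case the part equal to $jp+p+d$ and the part equal to $jp-d$ are separated by precisely those $h+d-1$ intermediate parts, so they occupy rows $i$ and $i+h+d$ and $\SFIVE{j}{d}$ is violated. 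In particular the multiplicities $m_{jp}$ and $m_{jp+p}$, which you flag as the main obstacle, are handled once and for all in the base case $d=0$ and never reenter the induction, since all subsequently added values are non-multiples of $p$.
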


\begin{proof}
We prove (\ref{indnuma}), (\ref{indnumb}) by induction on $d$.
The base case $d=1$ for (\ref{indnuma}) follows from $\SFOUR{j}{1}$. For $2\leq d\leq h$, 
assume that (\ref{indnuma}) holds for $d-1$. If $\sum_{k=jp+h+1-(d-1)}^{jp+h+(d-1)}m_k(\lambda)\leq d-2$, then (\ref{indnuma}) holds for $d$ by the $p$-strictness of $\lambda$. 
Since $\sum_{k=jp+h+1-(d-1)}^{jp+h+(d-1)}m_k(\lambda)= d-1$ and $m_{jp+h+1-d}(\lambda)=m_{jp+h+d}(\lambda)=1$ violate $\SFOUR{j}{d}$, (\ref{indnuma}) holds in this case, too.
The base case $d=0$ for (\ref{indnumb}) follows from (S2) and $\SFIVE{j}{0}$. Then, the rest of the argument is the same as in the proof of (\ref{indnuma}), 
replacing the role of $\SFOURC$ with $\SFIVEC$.
\end{proof}

As we mentioned in \S\ref{anRRPT}, Theorem \ref{maintheorem} gives a definition of $B^{\CIR}_{p-1,h+1,h+1}$.
\begin{Prop}\label{andsub}
$S_p\subseteq B_{p-1,h+1,h+1}$ (see Theorem \ref{AndrewGen}).
\end{Prop}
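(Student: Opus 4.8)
The plan is to verify the two defining conditions (B1), (B2) of $B_{p-1,h+1,h+1}$ (with $\ell=p-1$, $k=h+1$, $a=h+1$, so that $\ell+1=p$ and $k-1=h$) for an arbitrary $\lambda\in S_p$. Note first that $S_p\subseteq\PARRR{\textrm{$p$-str}}$ by $\SONE{}{}$, so the ambient set matches. Condition (B1) reads $\lambda_i-\lambda_{i+h}\geq p$ for $1\leq i\leq\ell(\lambda)-h$ with $\geq$ strict when $\lambda_i\in p\Z$; the inequality part is exactly (S2)', and the strictness when $\lambda_i\in p\Z$ is precisely the content of $\SFIVE{k}{0}$ for the relevant $k$ (as recorded in Remark~\ref{redudef}). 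So (B1) is immediate.

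The substance is (B2), which for these parameters asks
\begin{align*}
\sum_{i=1}^{p}m_i(\lambda)\leq h\qquad\text{and}\qquad \sum_{i=j}^{p-j}m_i(\lambda)\leq h+1-j\quad\text{for }1\leq j\leq h.
\end{align*}
The second family of inequalities is (B2) with the index range $[j,p-j]$; rewriting $j=h+1-d$ (so $d$ runs over $1\leq d\leq h$ and $p-j=h+d$, $j=h+1-d$) we want $\sum_{i=h+1-d}^{h+d}m_i(\lambda)\leq d$, which is exactly Proposition~\ref{indnum}(\ref{indnuma}) at $j=0$. The first inequality $\sum_{i=1}^{p}m_i(\lambda)\leq h$ is the $j=0$, $d=h$ case of $\sum_{i=jp-d}^{jp+p+d}m_k(\lambda)\leq h+d$... wait — more directly, it is the bound $\sum_{k=jp-d}^{jp+p+d}m_k(\lambda)\leq h+d$ of Proposition~\ref{indnum}(\ref{indnumb}) at $j=1$, $d=h-1$ restricted: at $j=1$, $d=h-1$ we get $\sum_{k=p-h+1}^{p+p+h-1}m_k(\lambda)=\sum_{k=h+2}^{3h+1}m_k$, which is not quite it. Instead I would get $\sum_{i=1}^{p}m_i(\lambda)\leq h$ as follows: by $p$-strictness $m_p(\lambda)$ may be large, but $S_p^{1,p-1}=\PARR{\textrm{$p$-str}}{1}{p-1}$ (a special case adjacent to Lemma~\ref{ideal3}) together with Lemma~\ref{trivell} applied to $\RES{\lambda}{1}{p-1}$ gives $\ell(\RES{\lambda}{1}{p-1})\leq h$, i.e. $\sum_{i=1}^{p-1}m_i(\lambda)\leq h$; and then $\SFIVE{k}{0}$-type strictness in (B1) forbids $\lambda$ from having both a part equal to $p$ and $h$ parts in $[1,p-1]$ simultaneously when that would force $\lambda_i-\lambda_{i+h}$ too small — more cleanly, Proposition~\ref{indnum}(\ref{indnumb}) at $j=0$, $d=0$ reads $\sum_{k=0}^{p}m_k(\lambda)=\sum_{k=1}^{p}m_k(\lambda)\leq h$, which is exactly what we need.

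So the whole proof reduces to: (B1) from (S2)' and Remark~\ref{redudef}; the first part of (B2) from Proposition~\ref{indnum}(\ref{indnumb}) at $j=d=0$; the remaining parts of (B2) from Proposition~\ref{indnum}(\ref{indnuma}) at $j=0$ after reindexing $j\mapsto h+1-d$. The only point requiring care — and the main (if modest) obstacle — is matching the index conventions of Theorem~\ref{AndrewGen} (the range $[j,\ell-j+1]=[j,p-j]$ and the bound $a-j=h+1-j$) against the shifted indexing $[h+1-d,h+d]$ with bound $d$ used in Proposition~\ref{indnum}, and checking that Proposition~\ref{indnum}(\ref{indnumb}) at $j=0$ genuinely covers the sum $\sum_{i=1}^{p}m_i(\lambda)$ (it does, since its range is $k=0$ to $k=p$ and $m_0(\lambda)=0$). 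No new combinatorial input beyond Proposition~\ref{indnum} is needed.
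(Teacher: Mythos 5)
Your proof is correct and follows essentially the paper's own route: (B1) is \textup{(S2)'} together with Remark \ref{redudef}, and the second family in (B2) is exactly Proposition \ref{indnum} (\ref{indnuma}) at $j=0$ after the reindexing $d=h+1-j$. The only cosmetic difference is that for $\sum_{i=1}^{p}m_i(\lambda)\leq h$ you settle on Proposition \ref{indnum} (\ref{indnumb}) at $j=d=0$, whereas the paper cites Lemma \ref{trivell} (applied to $\RES{\lambda}{1}{p}$); these are the same observation, since the base case of Proposition \ref{indnum} (\ref{indnumb}) is itself just \textup{(S2)'} on parts lying in an interval of length $<p$.
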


\begin{proof}
(B2) follows from (S2) and
$j=0$ of Proposition \ref{indnum} (\ref{indnuma}).
\end{proof}

\subsection{The map $\lambda\mapsto(\lambda^+,\lambda^-)$}\label{magicmapsec}
In this subsection, we fix $j\geq 0$ and put 
\begin{align*}
\EXS{j}=\{\lambda\in\STRICT_p^{[jp-h,jp+h]}\mid\ell(\lambda)\leq h\},
\end{align*}
as an auxiliary set for constructing a bijection $S_p^{[jp-h,jp+h]}\cong S_p^{[jp+1,jp+h]}\times S_p^{[jp-h,jp-1]}$.

\begin{Def}\label{magicmap}
For $\lambda\in \EXS{j}$, we put 
\begin{align}
\begin{split}
a_i=a_i(\lambda) &:= -i+\sum_{k=jp-i}^{jp+i}m_k(\lambda)\quad\textrm{for\quad$0\leq i\leq h$},\\
i_0=i_0(\lambda) &:= \min \{0\leq i\leq h\mid a_i=0\},
\label{henkanm2}
\end{split}
\end{align}
and define $\lambda^+\in S_p^{[jp+1,jp+h]}$ and $\lambda^-\in S_p^{[jp-h,jp-1]}$ (see Lemma \ref{ideal3}) by
\begin{align}
m_{jp\pm i}(\lambda^{\pm}) = \begin{cases}
1-m_{jp\mp i}(\lambda) & \textrm{if $1\leq i\leq i_0$}, \\
m_{jp\pm i}(\lambda) & \textrm{if $i_0< i\leq h$}.
\end{cases}
\label{henkanm}
\end{align}
\end{Def}

Note that $i_0(\lambda)$ is well-defined by $a_0(\lambda) = m_{jp}(\lambda) \geq 0$, $a_h(\lambda) = \ell(\lambda)-h \leq 0$ and
\begin{align}
a_i(\lambda)-a_{i-1}(\lambda)=m_{jp+i}(\lambda)+m_{jp-i}(\lambda)-1 \in\{-1,0,1\}.
\label{aexreason}
\end{align}

\begin{Rem}\label{consteng}
Put another way, $\lambda^\pm$ is constructed by the following rule.
\begin{enumerate}
\item For $1\leq i\leq i_0$, we have, for any $(\DIAMOND,\DIAMONDD)=(\in,\not\in), (\not\in,\in)$,
\begin{itemize}
\item[{}] $jp+i\DIAMOND\lambda^+$ and $jp-i\DIAMOND\lambda^-$, if $jp+i\DIAMONDD\lambda$ and $jp-i\DIAMONDD\lambda$,
\item[{}] $jp+i\DIAMOND\lambda^+$ and $jp-i\DIAMONDD\lambda^-$, if $jp+i\DIAMOND\lambda$ and $jp-i\DIAMONDD\lambda$.
\end{itemize}
\item For $i_0<i\leq h$, we have, for any $(\DIAMOND,\DIAMONDD)=(\in,\in), (\in,\not\in), (\not\in,\in), (\not\in,\not\in)$,
\begin{itemize}
\item[{}] $jp+i\DIAMOND\lambda^+$ and $jp-i\DIAMONDD\lambda^-$, if $jp+i\DIAMOND\lambda$ and $jp-i\DIAMONDD\lambda$.
\end{itemize}
\end{enumerate}
\end{Rem}

\begin{Prop}\label{charSp}
We have
  \begin{align*}
  S_p^{[jp-h,jp+h]}=\{\lambda\in \EXS{j}\mid
  \textrm{$(a_{d-1}(\lambda),a_d(\lambda))\ne(0,1)$
  for $1\leq d\leq h$}\}.
\end{align*}
\end{Prop}

\begin{proof}
Note $S_p^{[-h,h]}=\STRICT^{[1,h]}=\EXS{0}$, and note $a_d(\lambda)\leq 0$ for $0\leq d\leq h$, $\lambda\in\EXS{0}$. 
Let $\lambda\in \STRICT_p^{[jp-h,jp+h]}$ for $j\geq 1$.
Notice that $\SFOURC,\SFIVEC$ for $\lambda$ are vacuously true, (S2) is equivalent to $\ell(\lambda)\leq h$, and $(a_{d-1}(\lambda),a_{d}(\lambda))=(0,1)$ violates $\STHREE{j}{d}$.
\end{proof}

\begin{Def}\label{invdef}
For $\mu_+\in S_p^{[jp+1,jp+h]}$ and $\mu_-\in S_p^{[jp-h,jp-1]}$, we put 
\begin{align*}
b_i=b_i(\mu_+,\mu_-) &:= -i+\sum_{k=jp-i}^{jp-1}m_k(\mu_-)+\sum_{k=jp+1}^{jp+i}m_k(\mu_+)\quad\textrm{for\quad$0\leq i\leq h$},\\
j_0=j_0(\mu_+,\mu_-) &:= \min \{0\leq i\leq h\mid b_i=b \},
\quad\textrm{where $b=b(\mu_+,\mu_-):=\max\{b_0,\dots,b_h\}$},
\end{align*}
and define $\INVMAP{j}(\mu_+,\mu_-)\in \STRICT_p^{[jp-h,jp+h]}$ by
\begin{align*}
\INVMAP{j}(\mu_+,\mu_-)_{jp\pm i}=\begin{cases}
b & \textrm{if $i=0$}, \\
1-m_{jp\mp i}(\mu_{\mp}) & \textrm{if $1\leq i\leq j_0$}, \\
m_{jp\pm i}(\mu_{\pm}) & \textrm{if $j_0<i\leq h$}.
\end{cases}
\end{align*}
\end{Def}

There is an obvious operational description for the construction of $\INVMAP{j}(\mu_+,\mu_-)$, similar to the map $\lambda\mapsto(\lambda^+,\lambda^-)$ 
as in Remark \ref{consteng}. 

\begin{Ex}\label{pict}
  Let $p=33$, $j=1$, and take
\begin{align*}
  \mu_+=(47,46,44,41,40,37,36),\quad
  \mu_-=(31,30,29,27,26,25,23,21,20,19,18).
  \end{align*}
The picture below gives $j_0=14$, $b=3$ and
\begin{align*}
  \INVMAP{1}(\mu_+,\mu_-)=(44,42,38,34,33,33,33,32,31,28,27,24,23,21,18).
\end{align*}
\begin{align*}
  \begin{array}{l|ccccccccccccccccc}
    i & 16 & 15 & 14 & 13 & 12 & 11 & 10 & 9 & 8 & 7 & 6 & 5 & 4 & 3 & 2 & 1 & 0 \\ \hline
    b_i & 2 & 3 & 3 & 2 & 1 & 1 & 1 & 1 & 2 & 1 & 0 & 0 & 1 & 0 & -1 & -1 & 0 \\ \hline
    \mu_+ & \circ & \circ & 47 & 46 & \circ & 44 & \circ & \circ & 41 & 40 & \circ & \circ & 37 & 36 & \circ & \circ \\
    \mu_- & \circ & 18 & 19 & 20 & 21 & \circ & 23 & \circ & 25 & 26 & 27 & \circ & 29 & 30 & 31 & \circ \\ \hline
    \phantom{\mu_+} & \circ & \circ & \circ & \circ & \circ & 44 & \circ & 42 & \circ & \circ & \circ & 38 & \circ & \circ & \circ & 34 \\
    \phantom{\mu_-} & \circ & 18 & \circ & \circ & 21 & \circ & 23 & 24 & \circ & \circ & 27 & 28 & \circ & \circ & 31 & 32  
\end{array}
\end{align*}
\end{Ex}

\begin{Thm}\label{pmbij}
The correspondence (see Definition \ref{magicmap})
\begin{align*}
S_p^{[jp-h,jp+h]}\LONGTO S_p^{[jp+1,jp+h]}\times S_p^{[jp-h,jp-1]},\quad
\lambda\LONGMAPSTO (\lambda^+,\lambda^-),
\end{align*}
is a bijection with the inverse $\INVMAP{j}:S_p^{[jp+1,jp+h]}\times S_p^{[jp-h,jp-1]}\ISOM S_p^{[jp-h,jp+h]}$.
\end{Thm}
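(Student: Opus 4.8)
The plan is to show that the two maps $\lambda\mapsto(\lambda^+,\lambda^-)$ and $\INVMAP{j}$ are mutually inverse; since both are explicitly defined, this reduces to bookkeeping that I would organize around the two auxiliary sequences $(a_i)$ and $(b_i)$. First I would verify that the target of each map is correctly stated. For $\lambda\mapsto(\lambda^+,\lambda^-)$ this is essentially Lemma \ref{ideal3} together with the fact that $\lambda^{\pm}$ are honest partitions supported in $[jp+1,jp+h]$ (resp. $[jp-h,jp-1]$), which is immediate from \eqref{henkanm} and $m_{jp\mp i}(\lambda)\in\{0,1\}$ for $i\geq 1$ (the $p$-strictness of $\lambda\in\EXS{j}$). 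For $\INVMAP{j}$ I must check that $\INVMAP{j}(\mu_+,\mu_-)$ lands in $\EXS{j}$ and in fact in $S_p^{jp-h,jp+h}$: that it is a partition needs $b_{j_0}\geq 0$ (clear, as $b_0=0$ so the max is $\geq 0$) and that the multiplicity at $jp$ does not force a violation, while membership in $S_p^{jp-h,jp+h}$ will follow from Proposition \ref{charSp} once I have the formula \eqref{ab1}--\eqref{ab3} for $a_i(\INVMAP{j}(\mu_+,\mu_-))$ in hand, because the defining condition $\NOT(a_d=1\wedge a_{d-1}=0)$ translates into a statement about where $b_i$ attains its maximum — namely, that $j_0$ being the \emph{first} index achieving $\max b_{i'}$ precisely rules out the bad jump.

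The heart of the argument is the pair of identities in the unnumbered Proposition preceding the theorem, \eqref{ba123} and \eqref{ab1}--\eqref{ab3}. I would prove \eqref{ba123} by a direct computation: from \eqref{henkanm}, for $i\leq i_0$ the contribution of the pair $(jp-k,jp+k)$ to $\sum_{k=jp-i}^{jp-1}m_k(\mu_-)+\sum_{k=jp+1}^{jp+i}m_k(\mu_+)$ is $m_{jp+k}(\lambda^+)+m_{jp-k}(\lambda^-)=(1-m_{jp-k}(\lambda))+(1-m_{jp+k}(\lambda))=2-(m_{jp+k}(\lambda)+m_{jp-k}(\lambda))$, whereas for $i>i_0$ it is $m_{jp+k}(\lambda)+m_{jp-k}(\lambda)$ unchanged; summing and comparing with the telescoping identity \eqref{aexreason} gives exactly the three cases, the middle case $i=i_0$ being the transition point where $a_{i_0}(\lambda)=0$. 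Similarly \eqref{ab1}--\eqref{ab3} follows from unwinding Definition \ref{invdef}. I would also record the companion fact that $i_0(\lambda)=j_0(\lambda^+,\lambda^-)$ and $j_0(\mu_+,\mu_-)=i_0(\INVMAP{j}(\mu_+,\mu_-))$: the first because, by \eqref{ba123}, $b_i(\lambda^+,\lambda^-)$ is $a_0(\lambda)$ minus a nonnegative quantity for $i<i_0$, equals $a_0(\lambda)$ at $i=i_0$, and for $i>i_0$ equals $a_0(\lambda)+a_i(\lambda)$ with $a_i(\lambda)\leq 0$ by Proposition \ref{charSp} (here I crucially use $\lambda\in S_p^{jp-h,jp+h}$, not merely $\lambda\in\EXS{j}$, to know $a_i(\lambda)\le 0$ for $i\ge i_0$) — so the maximum of $b$ is $a_0(\lambda)$, first attained at $i_0$; the second is symmetric, using that $j_0$ is by definition the first maximizer.

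With these in place, both composites collapse. For $\INVMAP{j}\circ(-)$: given $\lambda\in S_p^{jp-h,jp+h}$, we have $j_0(\lambda^+,\lambda^-)=i_0(\lambda)$, so for $1\leq i\leq i_0$ the $\INVMAP{j}$-rule returns $1-m_{jp\mp i}(\lambda^{\mp})=1-(1-m_{jp\pm i}(\lambda))=m_{jp\pm i}(\lambda)$, for $i_0<i\leq h$ it returns $m_{jp\pm i}(\lambda^{\pm})=m_{jp\pm i}(\lambda)$, and at $i=0$ it returns $b_{j_0}=a_0(\lambda)=m_{jp}(\lambda)$; hence $\INVMAP{j}(\lambda^+,\lambda^-)=\lambda$. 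The other composite $(-)\circ\INVMAP{j}$ is identical with the roles of $(a,i_0)$ and $(b,j_0)$ interchanged, using $i_0(\INVMAP{j}(\mu_+,\mu_-))=j_0(\mu_+,\mu_-)$ and the definitions of $(\cdot)^{\pm}$ applied to $\INVMAP{j}(\mu_+,\mu_-)$. The one subtlety I expect to be the main obstacle is the well-definedness of $\INVMAP{j}$ as a map \emph{into $S_p^{jp-h,jp+h}$} rather than just into $\PARR{\textrm{$p$-str}}{jp-h}{jp+h}$: I need that $\INVMAP{j}(\mu_+,\mu_-)$ has length $\leq h$ (equivalently $a_h\leq 0$, which by \eqref{ab3} is $b_h-b_{j_0}\leq 0$, automatic) and satisfies the Proposition \ref{charSp} criterion; the latter amounts to showing that, for the sequence $a_i:=a_i(\INVMAP{j}(\mu_+,\mu_-))$, one never has $a_d=1$ and $a_{d-1}=0$, and by \eqref{ab1}--\eqref{ab3} this is exactly the statement that $b$ never increases by $1$ at a step that lands it at its previous running behavior near the maximum — a case check on whether $d\le j_0$ or $d>j_0$, where the choice of $j_0$ as the \emph{minimal} maximizer does the work. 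Once that case analysis is dispatched, the theorem follows.
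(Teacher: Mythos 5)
Your proposal is correct and follows essentially the same route as the paper: both arguments rest on the formulas \eqref{ba123} and \eqref{ab1}--\eqref{ab3}, the characterization in Proposition \ref{charSp}, the identification $i_0=j_0$ under both composites, and the key fact that $a_i(\lambda)\leq 0$ for $i\geq i_0(\lambda)$ (which the paper isolates as Lemma \ref{aproperty} and proves by taking a minimal $d>i_0$ with $a_d>0$ and invoking \eqref{aexreason} to force the forbidden pattern $a_d=1$, $a_{d-1}=0$ — the one-line argument you compress into ``by Proposition \ref{charSp}'').
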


\begin{proof} On the composite of the operations in Definition \ref{magicmap}
  and Definition \ref{invdef}, it is easy to see, for
  $\lambda\in \EXS{j}$ and 
  $\mu_+\in S_p^{[jp+1,jp+h]}$, $\mu_-\in S_p^{[jp-h,jp-1]}$, we have
\begin{align}
b_i(\lambda^+,\lambda^-) &=
\begin{cases}
a_0(\lambda)-a_i(\lambda) & \textrm{if $0\leq i<i_0(\lambda)$}, \\
a_0(\lambda) & \textrm{if $i=i_0(\lambda)$}, \\
a_0(\lambda)+a_i(\lambda) & \textrm{if $i_0(\lambda)<i\leq h$},
\end{cases}
\label{ba123}
\\
  a_i(\INVMAP{j}(\mu_+,\mu_-)) &=
\begin{cases}
b(\mu_+,\mu_-)-b_i(\mu_+,\mu_-) & \textrm{if $0\leq i<j_0(\mu_+,\mu_-)$}, \\
0 & \textrm{if $i=j_0(\mu_+,\mu_-)$}, \\
b_i(\mu_+,\mu_-)-b(\mu_+,\mu_-) & \textrm{if $j_0(\mu_+,\mu_-)<i\leq h$}.
\end{cases}
\label{ab123}
\end{align}

First, we prove $\nu=\INVMAP{j}(\mu_+,\mu_-)$ for $\mu_+\in S_p^{[jp+1,jp+h]}, \mu_-\in S_p^{[jp-h,jp-1]}$ belongs to $S_p^{[jp-h,jp+h]}$. 
By \eqref{ab123}, we have $a_h(\nu)=\ell(\nu)-h\leq 0$. 
If we have $a_d(\nu)=1$ (resp. $a_{d-1}(\nu)=0$) for $1\leq d\leq h$, then we must have $d<j_0(\mu_+,\mu_-)$ (resp. $d-1\geq j_0(\mu_+,\mu_-)$) by \eqref{ab123}. Thus $\nu\in S_p^{[jp-h,jp+h]}$ by Proposition \ref{charSp}.

To prove $\mu_\pm=\nu^\pm$ for $\nu=\INVMAP{j}(\mu_+,\mu_-)\in S_p^{[jp-h,jp+h]}$, where $\mu_+\in S_p^{[jp+1,jp+h]}, \mu_-\in S_p^{[jp-h,jp-1]}$, it is enough to prove $i_0(\nu)=j_0(\mu_+,\mu_-)$. This follows from \eqref{ab123}.

To prove $\lambda=\INVMAP{j}(\lambda^+,\lambda^-)$ for $\lambda\in S_p^{[jp-h,jp+h]}$, 
it is enough to prove $j_0(\lambda^+,\lambda^-)=i_0(\lambda)$ and $b(\lambda^+,\lambda^-)=a_0(\lambda)(=m_{jp}(\lambda))$.
For this purpose, by \eqref{ba123}, we just need to see $a_i(\lambda)\leq 0$ for $i_0(\lambda)<i\leq h$.
Assume the contrary, and take the minimum instance $i_0(\lambda)<d\leq h$ with $a_d(\lambda)>0$. Then, we must
have $a_d(\lambda)=1$ and $a_{d-1}(\lambda)=0$ by \eqref{aexreason}. Thanks to Proposition \ref{charSp}, this contradicts $\lambda\in S_p^{[jp-h,jp+h]}$.
\end{proof}

In the latter half of the proof of Theorem \ref{pmbij}, we proved the following.

\begin{Lem}\label{aproperty}
For $\lambda\in S_p^{[jp-h,jp+h]}$, we have $a_i(\lambda)\leq 0$ whenever $i_0(\lambda)\leq i\leq h$.
\end{Lem}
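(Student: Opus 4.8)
The plan is to reuse, and slightly repackage, the argument that already appears in the second half of the proof of Theorem \ref{pmbij}, where precisely this inequality was needed (to check $j_0(\lambda^+,\lambda^-)=i_0(\lambda)$). First I would dispose of the boundary index $i=i_0(\lambda)$ for free: by the very definition of $i_0(\lambda)$ in \eqref{henkanm2} as $\min\{0\leq i\leq h\mid a_i(\lambda)=0\}$, we have $a_{i_0(\lambda)}(\lambda)=0$, so the claim is trivial there. It remains to treat $i_0(\lambda)<i\leq h$, and here I would argue by contradiction: assume some such $i$ has $a_i(\lambda)>0$, and let $d$ be the \emph{minimal} index with $i_0(\lambda)<d\leq h$ and $a_d(\lambda)>0$.

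Next I would pin down $a_{d-1}(\lambda)$. Since $d>i_0(\lambda)$ we have $d-1\geq i_0(\lambda)$; if $d-1=i_0(\lambda)$ then $a_{d-1}(\lambda)=0$ by the definition of $i_0(\lambda)$, while if $d-1>i_0(\lambda)$ then minimality of $d$ forces $a_{d-1}(\lambda)\leq 0$. In either case $a_{d-1}(\lambda)\leq 0$. Now invoke \eqref{aexreason}, i.e.\ $a_d(\lambda)-a_{d-1}(\lambda)=m_{jp+d}(\lambda)+m_{jp-d}(\lambda)-1\in\{-1,0,1\}$: together with $a_d(\lambda)\geq 1$ and $a_{d-1}(\lambda)\leq 0$ this forces $a_d(\lambda)=1$ and $a_{d-1}(\lambda)=0$. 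Finally, Proposition \ref{charSp} describes $S_p^{jp-h,jp+h}$ as exactly those $\lambda\in \EXS{j}$ admitting no $1\leq d\leq h$ with $a_d(\lambda)=1$ and $a_{d-1}(\lambda)=0$; since $1\leq d\leq h$ here, this contradicts the hypothesis $\lambda\in S_p^{jp-h,jp+h}$, and the proof is complete.

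There is essentially no obstacle: the whole content is the ``changes by at most one'' property \eqref{aexreason} combined with the forbidden-configuration characterization of Proposition \ref{charSp}. The only point that needs a moment's care is that the minimal counterexample $d$ may satisfy $d-1=i_0(\lambda)$, so one must appeal to $a_{i_0(\lambda)}(\lambda)=0$ rather than to minimality to conclude $a_{d-1}(\lambda)\leq 0$ in that sub-case. As an alternative phrasing one could run a downward induction from $a_h(\lambda)=\ell(\lambda)-h\leq 0$, but the minimal-counterexample formulation is the cleanest and matches the bookkeeping already used for Theorem \ref{pmbij}.
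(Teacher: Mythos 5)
Your proof is correct and is essentially the paper's own argument: the paper also takes the minimal $d$ with $i_0(\lambda)<d\leq h$ and $a_d(\lambda)>0$, deduces $a_d(\lambda)=1$, $a_{d-1}(\lambda)=0$ from \eqref{aexreason}, and contradicts Proposition \ref{charSp} (the case $i=i_0(\lambda)$ being immediate from the definition of $i_0$). Your extra care about the sub-case $d-1=i_0(\lambda)$ is a harmless elaboration of the same reasoning.
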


\subsection{Zigzag property}\label{zzsec}
The rest of this section is devoted to proving Theorem \ref{zigzagplus}, which plays a key role in the proof of Theorem \ref{maintheorem}.
We only use Corollary \ref{ketsuron} after this section, and
readers can safely skip the proof, if they accept it.

\begin{Thm}\label{zigzagplus}
For $j\geq 0$ and take $\lambda\in S_p^{[jp-h,jp+h]}$ and $\mu\in S_p^{[(j+1)p-h,(j+1)p+h]}$.
Then, we have $\CONCAT{\mu}{\lambda}\in S_p$ if and only if $\CONCAT{\mu}{\lambda^+}\in S_p$.
\end{Thm}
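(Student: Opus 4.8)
The plan is to localise the membership test to a single window and then to a short explicit list of ``interface'' constraints. Since $\lambda$ and $\lambda^{+}$ have all parts at most $jp+h$ while $\mu$ has all parts at least $(j+1)p-h=jp+h+1$, both $\CONCAT{\mu}{\lambda}$ and $\CONCAT{\mu}{\lambda^{+}}$ are supported in the two neighbouring $p$-blocks around $jp$ and $(j+1)p$, and Lemma \ref{trivell} gives $\ell(\mu),\ell(\lambda)\le h$. Using Lemma \ref{weakideal}, Lemma \ref{invweakideal} and $\lambda,\lambda^{+},\mu\in S_p$, one sees that $\CONCAT{\mu}{\lambda}\in S_p$ (resp. $\CONCAT{\mu}{\lambda^{+}}\in S_p$) is equivalent to the conjunction of precisely those instances of \textup{(S1)}--\textup{(S5)} that couple a part of $\mu$ with a part of $\lambda$ (resp. $\lambda^{+}$), all other instances being forced by $\lambda,\lambda^{+},\mu\in S_p$. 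A value-reach inspection then pins these interface instances down: \textup{(S1)} is a pure multiplicity condition and never couples two blocks; the forbidden pattern of \textup{(S3)} is centred at a multiple of $p$ and no multiple of $p$ lies near the block boundary, so \textup{(S3)} never couples; the patterns of \textup{(S4)}${}_{k'}$ and \textup{(S5)}${}_{k}$ are centred at $p\mathbb{Z}+h+\tfrac12$ and across $[pk,p(k+1)]$ respectively, so only $k'=j$ and $k=j$ can couple; and \textup{(S2)}$'$ couples because $\ell(\mu),\ell(\lambda)\le h$. So the interface constraints are the cross part of \textup{(S2)}$'$, the pattern \textup{(S4)}${}_{j}$, and the pattern \textup{(S5)}${}_{j}$.

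Next I would isolate the asymmetry that makes the statement true. Since $\lambda^{+}$ has all parts in $[jp+1,jp+h]$, the interface instances of \textup{(S5)}${}_{j}$ for $\CONCAT{\mu}{\lambda^{+}}$, which require a part $\le jp$, are automatically vacuous. So it remains to show that for $\CONCAT{\mu}{\lambda}$ the conjunction of the interface \textup{(S2)}$'$, \textup{(S4)}${}_{j}$ and \textup{(S5)}${}_{j}$ is equivalent to, for $\CONCAT{\mu}{\lambda^{+}}$, the conjunction of the interface \textup{(S2)}$'$ and \textup{(S4)}${}_{j}$. I would rewrite each interface constraint explicitly as an identity among $(m_{(j+1)p+i}(\mu))_i$, $(m_{jp+i}(\lambda))_i$ or $(m_{jp+i}(\lambda^{+}))_i$, and $\ell(\mu)$, using that the only multiple of $p$ expressible as $\mu_a+\lambda_b$ is $(2j+1)p$, and translate the $\lambda^{+}$-side via Definition \ref{magicmap}: the complementation $m_{jp\pm i}(\lambda^{\pm})=1-m_{jp\mp i}(\lambda)$ for $1\le i\le i_0(\lambda)$, the identity $\ell(\lambda^{+})=\#\{t:\lambda_t\ge jp\}$, and the resulting formula expressing the position of a part $jp+i$ in $\lambda^{+}$ through its position in $\lambda$ and $a_{i-1}(\lambda)$.

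The comparison is then carried out by splitting at $i_0=i_0(\lambda)$. Above level $jp+i_0$ the multiplicities of $\lambda$ and $\lambda^{+}$ agree, hence, all higher multiplicities agreeing, so do the part-positions, and every interface \textup{(S2)}$'$- or \textup{(S4)}${}_{j}$-instance supported there transfers verbatim. At or below level $jp+i_0$, the complementation turns the datum ``$jp-i\in\lambda$'' read by \textup{(S5)}${}_{j}$ on the $\lambda$-side into ``$jp+i\notin\lambda^{+}$'', the negation of what \textup{(S4)}${}_{j}$ reads on the $\lambda^{+}$-side, so a low-level \textup{(S4)}${}_{j}$-obstruction of $\CONCAT{\mu}{\lambda^{+}}$ and a low-level \textup{(S5)}${}_{j}$-obstruction of $\CONCAT{\mu}{\lambda}$ never occur at the same parameter; and the rigidity $a_i(\lambda)\ge1$ for $0\le i<i_0$ (immediate from $a_0(\lambda)=m_{jp}(\lambda)$, \eqref{aexreason} and the definition of $i_0$) together with Lemma \ref{aproperty} ($a_i(\lambda)\le0$ for $i\ge i_0$) forces the remaining low-level obstructions on the two sides to match up. Assembling the two ranges with the vacuity from the previous step yields both implications.

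I expect the main obstacle to be this low-level matching: because $\lambda\mapsto\lambda^{+}$ changes the number of parts (by $\ell(\lambda^{-})$), the interface constraints -- which are conditions on part-positions, not on values -- must be re-indexed carefully, and one has to check that Lemma \ref{aproperty} together with $a_i(\lambda)\ge1$ for $i<i_0$ eliminates exactly the \textup{(S4)}${}_{j}$-instances that would otherwise break the correspondence. A secondary point requiring care is the multiplicity at $jp$, which $\lambda$ may carry with multiplicity $>1$ while $\lambda^{+}$ does not; it is reconciled only through the precise definition of $i_0$ and the identity $\ell(\lambda^{+})=\#\{t:\lambda_t\ge jp\}$.
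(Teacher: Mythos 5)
Your overall architecture agrees with the paper's: reduce membership of $\CONCAT{\mu}{\lambda}$ and $\CONCAT{\mu}{\lambda^+}$ to the ``interface'' instances of (S2)$'$, $\SFOUR{j}{d}$ and $\SFIVE{j}{d}$ (with (S1), (S3) and all other indices $k,k'$ never coupling the two blocks), observe that $\SFIVE{j}{d}$ is vacuous on the $\lambda^+$ side, and compare what remains through the multiplicity formulas of Definition \ref{magicmap} and the sign behaviour of $a_i(\lambda)$ around $i_0(\lambda)$. Those ingredients are all correct, including $a_i(\lambda)\geq 1$ for $i<i_0(\lambda)$ and $\ell(\lambda^+)=\#\{t\mid\lambda_t\geq jp\}$.

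The gap is in the mechanism you propose for the final comparison. You expect the interface obstructions to ``match up'' type-by-type and parameter-by-parameter once the vacuous $\SFIVE{j}{d}$'s are discarded, with instances above level $jp+i_0(\lambda)$ transferring verbatim and the low-level ones reconciled by complementation. That is not what happens, for two reasons. First, your dichotomy at level $jp+i_0(\lambda)$ omits the parts $jp-c$ of $\lambda$ with $c>i_0(\lambda)$: these are neither complemented nor present in $\lambda^+$, yet they do participate in interface (S2)$'$ and $\SFIVE{j}{d}$ instances. Second, and more importantly, the correspondence between obstructions genuinely permutes their types: an interface $\SFIVE{j}{d}$ violation of $\CONCAT{\mu}{\lambda}$ reappears in $\CONCAT{\mu}{\lambda^+}$ as an (S2)$'$ violation (the complementation inserts enough parts $jp+i$ to create $h+1$ parts inside a window of width $<p$); an (S2)$'$ violation of $\CONCAT{\mu}{\lambda}$ whose lower part lies below $jp$ reappears as a violation of the interval inequality $\sum_{k=jp+h+1-d}^{jp+h+d}m_k(\cdot)\leq d$ of Proposition \ref{indnum} (\ref{indnuma}), i.e.\ as an (S4)-type obstruction; and an (S2)$'$ violation of $\CONCAT{\mu}{\lambda^+}$ involving a newly created part pulls back to a violation of Proposition \ref{indnum} (\ref{indnumb}), an (S5)-type obstruction. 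A same-type, same-parameter matching does not exist, so the plan as written cannot be completed. Relatedly, transferring a violation across $\lambda\leftrightarrow\lambda^+$ only ever yields an inequality $(\lambda^{\pm})_b\geq v$ on the value at a given position (via the partial-sum identities of Proposition \ref{directcalc}, Corollary \ref{directcalc2} and Lemma \ref{trivlemma}), never the exact equalities demanded by the literal patterns (S4), (S5); one must therefore first replace those patterns by their monotone closures, which is precisely the content of Proposition \ref{indnum}. These two ideas --- the type rotation of obstructions and the strengthening of (S4), (S5) to interval-counting inequalities --- carry the paper's proof and are absent from yours.
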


\begin{Cor}\label{zigzagminus}
For $j\geq 1$ and take $\lambda\in S_p^{[jp-h,jp+h]}$ and $\nu\in S_p^{[(j-1)p-h,(j-1)p+h]}$.
Then, we have $\CONCAT{\lambda}{\nu}\in S_p$ if and only if $\CONCAT{\lambda^-}{\nu}\in S_p$.
\end{Cor}

\begin{proof}
Define $\lambda'$ by
$m_i(\lambda')=m_{2jp-i}(\lambda)$.
We easily see $i_0(\lambda)=i_0(\lambda')$ and $m_i((\lambda')^{\pm})=m_{2jp-i}(\lambda^{\mp})$. 
The rest is routine applications of Lemma \ref{invweakideal2} and Theorem \ref{zigzagplus}.
\end{proof}

\begin{Cor}\label{zigzagtotal}
For $j\geq 1$ and take $\lambda\in S_p^{[jp-h,jp+h]}$ and $\nu\in S_p^{[(j-1)p-h,(j-1)p+h]}$.
Then, we have $\CONCAT{\lambda}{\nu}\in S_p$ if and only if $\CONCAT{\lambda^-}{\nu^+}\in S_p$.
\end{Cor}

\begin{Cor}\label{ketsuron}
The following map is well-defined, and is a bijection.
\begin{align*}
  S_p &\longrightarrow
\SEIGEN_{j\geq 1} S_p^{[(j-1)p+1,jp-1]},\\
\lambda &\LONGMAPSTO
(\CONCAT{(\lambda|_{[jp-h,jp+h]})^{-}}{(\lambda_{[(j-1)p-h,(j-1)p+h]})^+})_{j\geq 1}.
\end{align*}
\end{Cor}

\begin{proof}
  Apply
  Lemma \ref{weakideal},
  Lemma \ref{invweakideal},
  Theorem \ref{pmbij} and
  Corollary \ref{zigzagtotal},
  noting $\nu=\INVMAP{0}(\nu,\varnothing)$ for $\nu\in S_p^{[1,h]}$.
  On the notations, recall \S\ref{noco}.
\end{proof}

After preparations, we prove Theorem \ref{zigzagplus} in \S\ref{zigzagproofb} and \S\ref{zigzagproofa}.


\begin{Prop}\label{directcalc}
For $j\geq 0$ and $\lambda\in S_p^{[jp-h,jp+h]}$, we have 
\begin{align*}
\sum_{k=c}^{h}m_{jp+k}(\lambda^+)=
\begin{cases}
\Delta^{+}+a_{c-1}(\lambda)=\Delta^{-}-(c-1) & \textrm{if $1\leq c\leq i_0(\lambda)$}, \\
\Delta^{+}=\Delta^{-}-(c-1+a_{c-1}(\lambda)) & \textrm{if $i_0(\lambda)<c\leq h$},
\end{cases}
\end{align*}
for $1\leq c\leq h$, where $\Delta^{+}=\sum_{k=c}^{h}m_{jp+k}(\lambda)$ and $\Delta^{-}=\sum_{k=1-c}^{h}m_{jp+k}(\lambda)$.
\end{Prop}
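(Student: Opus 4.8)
The plan is to read off $\sum_{k=c}^{h}m_{jp+k}(\lambda^{+})$ directly from the definition \eqref{henkanm} of $\lambda^{+}$, splitting the summation range at the threshold $i_0=i_0(\lambda)$, and then to simplify using nothing beyond the defining relation $a_{i_0}(\lambda)=0$ of \eqref{henkanm2} together with the definition of $a_i(\lambda)$ there. So the proof is a direct computation with no genuine obstacle; the only delicate point is an elementary interval-bookkeeping identity — that two ``outer'' subintervals are the complement of a symmetric central one — plus keeping the boundary case $c=i_0$ in mind.

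First I would dispose of the equalities that do not involve $\lambda^{+}$ at all and hold for every $1\le c\le h$. From the definitions of $\Delta^{\pm}$ and of $a_{c-1}(\lambda)$ one gets $\Delta^{-}-\Delta^{+}=\sum_{k=jp-(c-1)}^{jp+(c-1)}m_k(\lambda)=a_{c-1}(\lambda)+(c-1)$, i.e., $\Delta^{-}=\Delta^{+}+a_{c-1}(\lambda)+(c-1)$. Hence the claimed second equality in the range $1\le c\le i_0$ is exactly $\Delta^{+}+a_{c-1}(\lambda)=\Delta^{-}-(c-1)$, and in the range $i_0<c\le h$ it is exactly $\Delta^{+}=\Delta^{-}-(c-1+a_{c-1}(\lambda))$; both become automatic once the corresponding first equalities are proved.

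For $i_0<c\le h$ the first equality needs no work: \eqref{henkanm} gives $m_{jp+k}(\lambda^{+})=m_{jp+k}(\lambda)$ for all $k>i_0$, so $\sum_{k=c}^{h}m_{jp+k}(\lambda^{+})=\Delta^{+}$. For $1\le c\le i_0$, apply \eqref{henkanm} separately on $c\le k\le i_0$ and on $i_0<k\le h$ to obtain
\[
\sum_{k=c}^{h}m_{jp+k}(\lambda^{+})
=\sum_{k=c}^{i_0}\bigl(1-m_{jp-k}(\lambda)\bigr)+\sum_{k=i_0+1}^{h}m_{jp+k}(\lambda)
=\Delta^{+}+(i_0-c+1)-\Bigl(\sum_{k=jp-i_0}^{jp-c}m_k(\lambda)+\sum_{k=jp+c}^{jp+i_0}m_k(\lambda)\Bigr).
\]
The two ``outer'' intervals $[jp-i_0,jp-c]$ and $[jp+c,jp+i_0]$ are precisely the complement inside $[jp-i_0,jp+i_0]$ of the central interval $[jp-(c-1),jp+(c-1)]$, so the parenthesised sum equals $\sum_{k=jp-i_0}^{jp+i_0}m_k(\lambda)-\sum_{k=jp-(c-1)}^{jp+(c-1)}m_k(\lambda)=i_0-\bigl(a_{c-1}(\lambda)+(c-1)\bigr)$, using $a_{i_0}(\lambda)=0$ and the definition of $a_{c-1}(\lambda)$. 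Substituting this back collapses the right-hand side to $\Delta^{+}+a_{c-1}(\lambda)$, which is the remaining assertion; the boundary value $c=i_0$, where the second piece is empty, is covered by the same formulas.
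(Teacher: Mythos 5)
Your proof is correct and follows essentially the same route as the paper: a direct computation from \eqref{henkanm}, splitting the sum at $i_0(\lambda)$ and simplifying via $a_{i_0}(\lambda)=0$ and the definition of $a_{c-1}(\lambda)$ (the paper telescopes $\sum_{k=c}^{i_0}(-1+m_{jp-k}+m_{jp+k})=a_{i_0}-a_{c-1}$ where you use the complement-of-intervals identity, but these are the same computation). Your observation that $\Delta^{-}=\Delta^{+}+a_{c-1}(\lambda)+(c-1)$ makes the second equalities automatic is a slightly cleaner organization than the paper's separate verification, and the minor slip about which piece is empty at the boundary is harmless since all sums are interpreted as zero when empty.
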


\begin{proof}
Direct calculations using \eqref{henkanm2} and \eqref{henkanm} in Definition \ref{magicmap}.
Write \mbox{$i_0=i_0(\lambda)$}.

Assume $i_0<c\leq h$. Because $m_{jp+i}(\lambda^+)=m_{jp+i}(\lambda)$ for $i_0<i\leq h$, $\sum_{k=c}^{h}m_{jp+k}(\lambda^+)=\Delta^{+}$ is obvious. Similarly, $\Delta^{-}-(c-1+a_{c-1}(\lambda))
=\sum_{k=1-c}^{h}m_{jp+k}(\lambda)-\sum_{k=-(c-1)}^{c-1}m_{jp+k}(\lambda)=\sum_{k=c}^{h}m_{jp+k}(\lambda)=\sum_{k=c}^{h}m_{jp+k}(\lambda^+)$.

Assume $1\leq c\leq i_0(\lambda)$. By \eqref{henkanm},
$\sum_{k=c}^{h}m_{jp+k}(\lambda^+)=X$, where $X=\sum_{k=i_0+1}^{h}m_{jp+k}(\lambda)+\sum_{k=c}^{i_0}(1-m_{jp-k}(\lambda))$. Note $X=\Delta^{+}-\sum_{k=c}^{i_0}(-1+m_{jp-k}(\lambda)
+m_{jp+k}(\lambda))=\Delta^{+}-(a_{i_0}(\lambda)-a_{c-1}(\lambda))$.
Similarly, 
$X=\Delta^{-}-\sum_{k=1-c}^{i_0}m_{jp+k}(\lambda)+\sum_{k=c}^{i_0}(1-m_{jp-k}(\lambda))
=\Delta^{-}-\sum_{k=-i_0}^{i_0}m_{jp+k}(\lambda)+(i_0-c+1)=\Delta^{-}-(a_{i_0}(\lambda)+i_0)+(i_0-c+1)$.
\end{proof}





\subsection{Proof of $\CONCAT{\mu}{\lambda^+}\not\in S_p$ implies $\CONCAT{\mu}{\lambda}\not\in S_p$}\label{zigzagproofb}
Let $\nu=\CONCAT{\mu}{\lambda}$ and $\nu'=\CONCAT{\mu}{\lambda^+}$. Assume $\nu'\not\in S_p$. Then, there exist $1\leq a\leq \ell(\mu)$ and $1\leq b\leq \ell(\lambda^+)$ such that (X) or (Y). Here, $D'=(\ell(\mu)-a)+(b-1)$ is the number of parts in $\nu'$ between $\mu_a$ and $(\lambda^+)_b$ (not including them). 
\begin{enumerate}
\item[(X)]\label{zigzagaAA} $\mu_a-(\lambda^+)_b<p$ with $D'=h-1$.
\item[(Y)]\label{zigzagaBB} $\mu_a=jp+(h+d),(\lambda^+)_b=jp+h+1-d$ for some $1\leq d\leq h$ with $D'=d-1$.
\end{enumerate}
We write $(\lambda^+)_b=jp+c$ for some $1\leq c\leq h$.
Clearly, $\sum_{k=jp+c}^{jp+h}m_k(\lambda^+)=b\geq 1$.

The case $i_0(\lambda)<c$ is trivial because of $\lambda_b=(\lambda^+)_b$.
In the rest, we assume $1\leq c\leq i_0(\lambda)$.
By Proposition \ref{directcalc}, we have $\sum_{k=jp+1-c}^{jp+h}m_k(\lambda)=b+(c-1)$ and
$\lambda_{b+c-1}\geq jp+1-c$.
In each case, we see $\nu\not\in S_p$ as follows.
Here, $D=D'+(c-1)$ is the number of parts in $\nu$ between $\mu_a$ and $\lambda_{b+c-1}$. 
\begin{enumerate}
\item[(X)] $\mu_a\leq jp+p+(c-1)$ and $\lambda_{b+c-1}\geq jp-(c-1)$ with $D=(h-1)+(c-1)=h+c-2$ contradicts Proposition \ref{indnum} (\ref{indnumb}).
\item[(Y)] $\mu_a=jp+(h+d)$ and $\lambda_{b+h-d}\geq jp-(h-d)$ with $D=(d-1)+(h-d)=h-1$ violates (S2).
\end{enumerate}

\subsection{Proof of $\CONCAT{\mu}{\lambda}\not\in S_p$ implies $\CONCAT{\mu}{\lambda^+}\not\in S_p$}\label{zigzagproofa}
\subsubsection{Setting}
Let $\nu=\CONCAT{\mu}{\lambda}$ and $\nu'=\CONCAT{\mu}{\lambda^+}$.
Assume 
$\nu\not\in S_p$ in spite of $\lambda\in S_p^{[jp-h,jp+h]}$, $\mu\in S_p^{[(j+1)p-h,(j+1)p+h]}$. Then, there exist $1\leq a\leq \ell(\mu)$ and $1\leq b\leq \ell(\lambda)$ such that (P) or (Q) or (R). Here, $D=(\ell(\mu)-a)+(b-1)$ is the number of parts in $\nu$ between $\mu_a$ and $\lambda_b$. 
\begin{enumerate}
\item[(P)]\label{zigzagaA} $\mu_a-\lambda_b<p$ with $D=h-1$.
\item[(Q)]\label{zigzagaB} $\mu_a=jp+(h+d),\lambda_b=jp+h+1-d$ for some $1\leq d\leq h$ with $D=d-1$.
\item[(R)]\label{zigzagaC} $\mu_a=jp+(p+d),\lambda_b=jp-d$ for some $0\leq d\leq h$ with $D=h-1+d$.
\end{enumerate}

\subsubsection{The case $\lambda_b=jp+c$ for $1\leq c\leq h$}
This case occurs when (P) or (Q). Clearly, $\sum_{k=jp+c}^{jp+h}m_k(\lambda)= b(\geq 1)$.
By Proposition \ref{directcalc}, we have $\sum_{k=jp+c}^{jp+h}m_k(\lambda^+)\geq b$. Thus, we get
$1\leq b\leq \ell(\lambda^+)$ and $(\lambda^+)_b\geq \lambda_b$. In each case, we see $\nu'\not\in S_p$ as follows.
\begin{enumerate}
\item[(P)] $\mu_a-(\lambda^+)_b<p$ with $D=h-1$ violates (S2) again.
\item[(Q)] $\mu_a=jp+(h+d)$ and $(\lambda^+)_b\geq jp+h+1-d$ with $D=d-1$ contradicts Proposition \ref{indnum} (\ref{indnuma}).
\end{enumerate}

\subsubsection{The case $\lambda_b=jp-c$ for $0\leq c\leq h$}
It occurs when (P) or (R). Clearly, $\sum_{k=jp-c}^{jp+h}m_k(\lambda)\geq b(\geq 1)$. We may assume $c\ne h$ in each case as follows.
\begin{enumerate}
\item[(P)] If $\lambda_b=jp-h$, then $\mu_a-\lambda_b\geq p$ for any $jp+h+1\leq \mu_a(\leq jp+p+h)$.
\item[(R)] As in Remark \ref{redudef}, $\SFIVE{j}{h}$ is redundant.
\end{enumerate}
Let $c'=c+1$. Then $1\leq c'\leq h$, and we have $\sum_{k=jp+c'}^{jp+h}m_k(\lambda^+)\geq b-(c'-1)=b-c$
by Proposition \ref{directcalc} and Lemma \ref{aproperty}. We see $b>c$ in each case as follows.
\begin{enumerate}
\item[(P)] Since $\mu_a\leq jp+p-(1+c)$ and $\mu_{\ell(\mu)}\geq jp+p-h$, we have $\ell(\mu)-a\leq h-(c+1)$. Thus, $b-1=D-(\ell(\mu)-a)\geq c$.
\item[(R)] By Proposition \ref{charSp}, $\ell(\mu)\leq h$. Thus, $b-1\geq D-(h-1)=d(=c)$.
\end{enumerate}
Thus, we get $(\lambda^+)_{b-c}\geq jp+(c+1)$. In each case, we see $\nu'\not\in S_p$ as follows.
Here, $D'=D-c$ is the number of parts in $\nu'$ between $\mu_a$ and $(\lambda^+)_{b-c}$. 
\begin{enumerate}
\item[(P)] $\mu_a\leq jp+p-(1+c)=jp+h+(h-c)$ and $(\lambda^+)_{b-c}\geq jp+(c+1)=jp+h+1-(h-c)$ with $D'=h-1-c$ and $1\leq h-c\leq h$ contradicts Proposition \ref{indnum} (\ref{indnuma}).
\item[(R)] $\mu_a=jp+(p+d)$ and $(\lambda^+)_{b-d}\geq jp+(d+1)$ with $D'=h-1$ violates (S2).
\end{enumerate}

\section{Precrystal structures on $S_p$}\label{pkcsec}

\subsection{The category of precrystals}\label{pcrcat}
Let $\PFN$ (resp. $\SETS$) be the category of sets and partial functions
(resp. sets with a basepoint and basepoint-preserving maps).
A functor $\CONC:\PFN\LONGTO\SETS$ defined for a morphism $f\in\HOM_{\PFN}(A,B)$ by
\begin{align*}
\CONC(f:A\to B):(A\sqcup\{\ZERO\},\ZERO) &\LONGTO (B\sqcup\{\ZERO\},\ZERO),\\
a &\LONGMAPSTO \begin{cases}
f(a) & \textrm{if $a\in A$ and $f(a)$ is defined},\\
\ZERO & \textrm{if otherwise},
\end{cases}
\end{align*}
affords a category equivalence $\CONC:\PFN\ISOM\SETS$~\cite[Exercise 2.3.12]{Lei}. 
We denote a functor $\CONC|_{\SET}:\SET\to\SETS$ by $\CONCC$, regarding $\SET$ as a subcategory of $\PFN$.

\begin{Def}[{\cite[p.500]{KKMMNN}}]
For a nonempty set $I$, an $I$-precrystal structure on a set $B$ is a data of Kashiwara operators $\KE_i, \KF_i\in\HOM_{\PFN}(B,B)$ for $i\in I$ which satisfies the two conditions below.
\begin{enumerate}
\item For any $b\in B$, both of the following take finite values.
\begin{align*}
\varepsilon_i(b) &= \sup\{n\geq 0\mid (\CONC\KE_i)^n(b)\ne\ZERO\}, \\
\varphi_i(b) &= \sup\{n\geq 0\mid (\CONC\KF_i)^n(b)\ne\ZERO\}.
\end{align*}
\item 
  We have $\KE_i b=b'$ if and only if $b=\KF_i b'$
  for $b, b'\in B$.
\end{enumerate}
\end{Def}

\begin{Rem}\label{crgr}
A crystal graph $\CG(B)$ of an $I$-precrystal $B$ is an $I$-colored directed graph whose vertex set is $B$, and
adjacent relations $\AR{b}{i}{b'}$ (meaning that there is an $i$-colored directed edge from $b$ to $b'$) if and only if $\KF_ib=b'$, where $i\in I$ and $b,b'\in B$.
Obviously, $I$-precrystal structure can be recovered from the crystal graph.
\end{Rem}

We usually omit the index set $I$ when it is clear from the context.
Precrystals form a category by declaring that a morphism between $(B, (\KE_i)_{i\in I}, (\KF_i)_{i\in I})$ and $(B', (\KEP{i})_{i\in I}, (\KFP{i})_{i\in I})$ is a map $g:B\to B'$ such that, for $i\in I$, we have
\begin{align*}
\CONC(\KEP{i})\circ \CONCC(g)=\CONCC(g)\circ \CONC(\KE_i),\quad
\CONC(\KFP{i})\circ \CONCC(g)=\CONCC(g)\circ \CONC(\KF_i).
\end{align*}

\subsection{The tensor product rule}\label{tensec}
The category of $I$-precrystals 
is a strict monoidal category with a unit precrystal $\{\bullet\}$, where $\CONC(\KE_i)(\bullet)=\ZERO=\CONC(\KF_i)(\bullet)$ for $i\in I$.

\begin{Def}[{\cite[Definition 2.3.2]{Kas}}]\label{crystaltensor}
Given two precrystals $(B_1,(\KEO{i})_{i\in I}, (\KFO{i})_{i\in I})$ and $(B_2,(\KET{i})_{i\in I}, (\KFT{i})_{i\in I})$, we denote by $B_1\otimes B_2=B_1\times B_2$ the precrystal whose Kashiwara operators are defined according to the tensor product rule
\begin{align*}
\CONC(\KE_i)(b_1\otimes b_2) &= \begin{cases}
(\CONC(\KEO{i})b_1)\otimes b_2 & \textrm{if $\varphi_i(b_1)\geq \varepsilon_i(b_2)$}, \\
b_1\otimes (\CONC(\KET{i})b_2) & \textrm{if $\varphi_i(b_1)<\varepsilon_i(b_2)$},
\end{cases}\\
\CONC(\KF_i)(b_1\otimes b_2) &= \begin{cases}
(\CONC(\KFO{i})b_1)\otimes b_2 & \textrm{if $\varphi_i(b_1)>\varepsilon_i(b_2)$}, \\
b_1\otimes (\CONC(\KFT{i})b_2) & \textrm{if $\varphi_i(b_1)\leq\varepsilon_i(b_2)$}.
\end{cases}
\end{align*}
Here, $b_1\otimes b_2$ means $(b_1,b_2)\in B_1\times B_2$, and $\ZERO=\ZERO\otimes b_2=b_1\otimes\ZERO$ for $b_1\in B_1,b_2\in B_2$.
\end{Def}



\begin{Rem}
For precrystals $B_1,\dots,B_m$ and $b=b_1\otimes\cdots\otimes b_m\in B_1\otimes\cdots\otimes B_m$, we can see that (see ~\cite[\S2.1]{KN} and ~\cite[\S2.3]{Kas})
\begin{align}
\begin{split}
\varepsilon_i(b) &= \max \{\varepsilon_{i,k}:=\varepsilon_i(b_k)-\sum_{1\leq\mu<k}(\varphi_i(b_{\mu})-\varepsilon_i(b_{\mu}))\mid 1\leq k\leq m\},\\
\varphi_i(b) &= \max \{\varphi_{i,k}:=\varphi_i(b_k)+\sum_{k<\mu\leq m}(\varphi_i(b_{\mu})-\varepsilon_i(b_{\mu}))\mid 1\leq k\leq m\},\\
\KE_ib &= b_1\otimes\cdots\otimes (\KE_ib_{k_e})\otimes\cdots \otimes b_m,\\
\KF_ib &= b_1\otimes\cdots\otimes (\KF_ib_{k_f})\otimes\cdots \otimes b_m,
\end{split}
\label{tensorformula}
\end{align}
where $k_e$ (resp. $k_f$) is the smallest (resp. largest) $1\leq k\leq m$ such that
$\varepsilon_i(b)=\varepsilon_{i,k}$, which
is equivalent to $\varphi_i(b)=\varphi_{i,k}$,
if $\varepsilon_i(b)>0$ (resp. $\varphi_i(b)>0$), and $\CONC(\KE_i)b=\ZERO$ (resp. $\CONC(\KF_i)b=\ZERO$) if otherwise.
\end{Rem}

\begin{Rem}\label{algtensor}
There is an algorithmic translation of the tensor product rule.
Given $b=b_1\otimes\cdots\otimes b_m\in B_1\otimes \cdots\otimes B_m$, write a $\pm$-sequence (the $i$-signature of $b$)
\begin{align*}
\underbrace{-\cdots-}_{\varepsilon_i(b_1)}\underbrace{+\cdots+}_{\varphi_i(b_1)}
\quad\cdots\quad
\underbrace{-\cdots-}_{\varepsilon_i(b_m)}\underbrace{+\cdots+}_{\varphi_i(b_m)}.
\end{align*}
Then, we obtain the reduced $i$-signature of $b$ by deleting every occurrence of a pair $+-$.
It is of the form $\underbrace{-\cdots-}_{\varepsilon_i(b)}\underbrace{+\cdots+}_{\varphi_i(b)}$, and
$k_e$ (resp. $k_f$) in \eqref{tensorformula} corresponds to the right (resp. left) most $-$ (resp. $+$) if any (see ~\cite[\S2.1]{KN}).
\end{Rem}

\begin{Rem}\label{inftensor}
For a sequence $(B_n,b_n)_{n\geq 0}$ of pairs of precrystal $B_n$ and an element $b_n\in B_n$ such that $\varphi_i(b_{n+1})=\varepsilon_i(b_n)$ for $n\geq 0$,
the set $\otimes'_{n\geq 0} (B_n,b_n)$ defined as
\begin{align*}
\{
(x_n)_{n\geq 0}\in {{\prod_{n\geq 0}}}B_n\mid
\textrm{$x_n=b_n$ for sufficiently large $n$}
\}
\end{align*}
has a precrystal structure
by the formula \eqref{tensorformula} (e.g., see ~\cite[\S7.2]{Kas}), where
we regard $(x_n)_{n\geq 0}$ as
$\cdots\otimes x_1\otimes x_0$.
\end{Rem}

\subsection{Precrystals structures on $S_{p}^{[(j-1)p+1,jp-1]}$}
Throughout the paper, we put
\begin{align}
\AI{p}=\{0,\dots,h\},\quad
\AII{p}=\{0,\dots,h-1\}
\label{aidex}
\end{align}
for an odd integer $p=2h+1\geq 3$.

\begin{Def}\label{pcryplus}
For $j\geq 0$, we put an $\AII{p}$-precrystal structure on $S_p^{[jp+1,jp+h]}$ by
\begin{align*}
  \begin{array}{lcl}
\KF_0\lambda =
\lambda\sqcup\{jp+1\}, & {} & \textrm{if $jp+1\not\in\lambda$}, \\[\jot]
\KF_i\lambda =
\lambda\setminus\{jp+i\}\sqcup\{jp+i+1\}, & {} & \textrm{if $jp+i\in\lambda$ and $jp+i+1\not\in\lambda$}, 
\end{array}
  \end{align*}
where $1\leq i\leq h-1$ and $\lambda\in S_p^{[jp+1,jp+h]}$.
\end{Def}

\begin{Def}\label{pcryminus}
For $j\geq 1$, we put an $\AII{p}$-precrystal structure on $S_p^{[jp-h,jp-1]}$ by
\begin{align*}
\begin{array}{lcl}
  \KF_0\lambda =
\lambda\setminus\{jp-1\}, & {} & \textrm{if $jp-1\in\lambda$}, \\[\jot]
\KF_i\lambda =
\lambda\setminus\{jp-(i+1)\}\sqcup\{jp-i\}, & {} & \textrm{if $jp-i\not\in\lambda$ and $jp-(i+1)\in\lambda$}, 
\end{array}
\end{align*}
where $1\leq i\leq h-1$ and $\lambda\in S_p^{[jp-h,jp-1]}$.
\end{Def}


\begin{Prop}\label{crystraii}
Let $j\geq 1$ and $\lambda\in S_p^{[(j-1)p+1,jp-1]}$, and
put $\mu=\RES{\lambda}{jp-h}{jp-1}$, $\nu=\RES{\lambda}{(j-1)p+1}{(j-1)p+h}$.
For $i\in\AII{p}$, if $\KF_i(\mu\otimes\nu)=\mu'\otimes\nu'$ for $\mu'\in S_p^{[jp-h,jp-1]}$, $\nu'\in S_p^{[(j-1)p+1,(j-1)p+h]}$ (i.e., $\CONC(\KF_i)(\mu\otimes\nu)\ne\ZERO$), 
then $\CONCAT{\mu'}{\nu'}\in S_p^{[(j-1)p+1,jp-1]}$.
\end{Prop}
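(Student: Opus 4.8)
The plan is to collapse the whole statement into a single monotone inequality. First I would dispose of the easy structural part. Since $\lambda\in S_p^{(j-1)p+1,jp-1}$ is strict with all parts in $[(j-1)p+1,jp-1]=[(j-1)p+1,(j-1)p+h]\sqcup[jp-h,jp-1]$, we have $\lambda=\CONCAT{\mu}{\nu}$. By Lemma \ref{ideal3}, $S_p^{jp-h,jp-1}=\PARR{\textrm{$p$-str}}{jp-h}{jp-1}$ and $S_p^{(j-1)p+1,(j-1)p+h}=\PARR{\textrm{$p$-str}}{(j-1)p+1}{(j-1)p+h}$; the Kashiwara operators of Definition \ref{pcryplus} and Definition \ref{pcryminus} are partial endomaps of these two sets, so $\mu'\in S_p^{jp-h,jp-1}$ and $\nu'\in S_p^{(j-1)p+1,(j-1)p+h}$ automatically, and since the two intervals are disjoint and contain no multiple of $p$ the partition $\lambda':=\CONCAT{\mu'}{\nu'}$ is already strict with all parts in $[(j-1)p+1,jp-1]$. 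Hence by the characterization \eqref{charS} it remains only to verify $\ell(\lambda')\leq h$ together with $\SFOUR{j-1}{d}$ for all $1\leq d\leq h$.

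Second, I would repackage those conditions. For a strict partition $\kappa$ with parts in $[(j-1)p+1,jp-1]$ and $0\leq d\leq h$, let $f_\kappa(d)$ be the number of part-values of $\kappa$ in the interval $[(j-1)p+h+1-d,(j-1)p+h+d]$; thus $f_\kappa(0)=0$, $f_\kappa(h)=\ell(\kappa)$, and $f_\kappa(d)-f_\kappa(d-1)\in\{0,1,2\}$. Translating $\SFOUR{j-1}{d}$ into the assertion ``$f_\kappa(d-1)\neq d-1$ or $f_\kappa(d)\leq d$'' and combining with \eqref{charS} via the obvious induction on $d$, one gets that $\kappa\in S_p^{(j-1)p+1,jp-1}$ if and only if $f_\kappa(d)\leq d$ for every $0\leq d\leq h$. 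So the task reduces to: assuming $f_\lambda(d)\leq d$ for all $d$, show $f_{\lambda'}(d)\leq d$ for all $d$.

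Third, I would run the case analysis on $\KF_i$. Reading off Definition \ref{pcryplus} and Definition \ref{pcryminus} (the first tensor factor $\mu$ carries index $j$, the second factor $\nu$ carries index $j-1$), each $\CONC(\KF_i)$ either moves a single part up by one step between neighbouring values, deletes $jp-1$ from $\mu$, or adjoins $(j-1)p+1$ to $\nu$; consequently $f_{\lambda'}$ agrees with $f_\lambda$ except at one index $d^\ast$, where $f_{\lambda'}(d^\ast)=f_\lambda(d^\ast)\pm1$. A decrease is harmless, and an increase occurs only when $\KF_i$ acts on the second factor $\nu$. In that case the tensor product rule of Definition \ref{crystaltensor} forces $\varphi_i(\mu)\leq\varepsilon_i(\nu)$, while $\CONC(\KF_i)\nu\neq\ZERO$ already forces $\varepsilon_i(\nu)=0$ (in these precrystals $\varepsilon_i,\varphi_i\in\{0,1\}$ and $\KE_i,\KF_i$ are never both defined at the same point), so $\varphi_i(\mu)=0$. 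Reading $\varphi_i(\mu)=0$ and $\CONC(\KF_i)\nu\neq\ZERO$ as statements about which of the relevant values are parts of $\lambda$, and feeding in the a priori bound $f_\lambda(d^\ast+1)\leq d^\ast+1$ when $1\leq i\leq h-1$ (for $i=0$ one has $d^\ast=h$, and the two facts say $jp-1\notin\lambda$ and $(j-1)p+1\notin\lambda$, whence $f_\lambda(h)=f_\lambda(h-1)\leq h-1$), I would upgrade $f_\lambda(d^\ast)\leq d^\ast$ to $f_\lambda(d^\ast)\leq d^\ast-1$, giving $f_{\lambda'}(d^\ast)=f_\lambda(d^\ast)+1\leq d^\ast$. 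One last appeal to \eqref{charS} then gives $\lambda'\in S_p^{(j-1)p+1,jp-1}$.

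The step I expect to be the main obstacle is precisely this last upgrade for $1\leq i\leq h-1$: squeezing the strict inequality $f_\lambda(d^\ast)\leq d^\ast-1$ out of the weak tensor-product inequality $\varphi_i(\mu)\leq\varepsilon_i(\nu)$ together with the mere applicability of $\KF_i$ to $\nu$, rather than the a priori bound $f_\lambda(d^\ast)\leq d^\ast$. The reformulation in terms of $f$ is what keeps this bookkeeping short; carrying out the case analysis directly with the raw conditions $\SFOUR{j-1}{d}$ is noticeably more delicate.
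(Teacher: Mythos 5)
Your argument is correct and follows essentially the same route as the paper's proof: the same four-way case split coming from the tensor product rule, with membership controlled by the counting bound of Proposition \ref{indnum} (\ref{indnuma}) — your $f_\kappa(d)\le d$ is exactly that bound localized to the window $[(j-1)p+1,jp-1]$, and your ``upgrade'' to $f_\lambda(d^\ast)\le d^\ast-1$ in the increase cases is the paper's contradiction with $\SFOUR{j-1}{d}$ for $\lambda$ in disguise. Recasting \eqref{charS} as the equivalence $\kappa\in S_p^{(j-1)p+1,jp-1}\Leftrightarrow\forall d,\,f_\kappa(d)\le d$ is a tidy bookkeeping device that makes the two ``decrease'' cases immediate, but the combinatorial content is identical.
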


\begin{proof}
  Put $\lambda'=\CONCAT{\mu'}{\nu'}$.
  We see $\lambda'\in S_p^{[(j-1)p+1,jp-1]}$ 
case by case as follows. 
Note
\begin{align}
  S_p^{[(j-1)p+1,jp-1]}=\{\lambda\in\STRICT^{[(j-1)p+1,jp-1]}\!\mid
  \ell(\lambda)\leq h, \SFOUR{j-1}{d} (1\leq d\leq h)\}.
\label{charS}
\end{align}

\noindent{$\bullet$} $i=0$ and $\varphi_0(\mu)=1$, $\varepsilon_0(\nu)=0$.
Then, $\mu'=\mu\setminus\{jp-1\}$, $\nu'=\nu$, and thus $\lambda'\in S_p^{[(j-1)p+1,jp-1]}$ by Lemma \ref{weakideal}.

\noindent{$\bullet$} $i=0$ and $\varphi_0(\mu)=0$, $\varphi_0(\nu)=1$.
Then, $\mu'=\mu$, $\nu'=\nu\sqcup\{(j-1)p+1\}$.
Since $jp-1\not\in\mu$, $\SFOUR{j-1}{h}$ holds for $\lambda'$.
Thus, it suffices to prove $\ell(\lambda')\leq h$. 
In fact, $\ell(\lambda')>h$ implies $\sum_{k=(j-1)p+2}^{jp-2}m_k(\lambda)\geq h$.
It violates Proposition \ref{indnum} (\ref{indnuma}).

\noindent{$\bullet$} $1\leq i<h$ and $\varphi_i(\mu)=1$, $\varepsilon_i(\nu)=0$. 
Then, $\mu'=\mu\setminus\{jp-(i+1)\}\sqcup\{jp-i\}$, $\nu'=\nu$.
It is enough to show $\SFOUR{j-1}{h+1-i}$ holds for $\lambda'$.
Assume the contrary, i.e., $\sum_{k=(j-1)p+(i+1)}^{jp-(i+1)}m_k(\lambda')=h-i$. 
Thus, $\sum_{k=(j-1)p+(i+1)}^{jp-(i+1)}m_k(\lambda)=h+1-i$ which contradicts Proposition \ref{indnum} (\ref{indnuma}).

\noindent{$\bullet$} $1\leq i<h$ and $\varphi_i(\mu)=0$, $\varphi_i(\nu)=1$. 
Then, $\mu'=\mu$, $\nu'=\nu\setminus\{(j-1)p+i\}\sqcup\{(j-1)p+(i+1)\}$.
It is enough to show $\SFOUR{j-1}{h-i}$ holds for $\lambda'$.
Assume the contrary, i.e., $\sum_{k=(j-1)p+(i+2)}^{jp-(i+2)}m_{k}(\lambda')=h-1-i$ and $m_{jp-(i+1)}(\mu)=1$.
Note that $m_{jp-i}(\mu)=1$ follows from $\varphi_i(\mu)=0$. Thus, we have $\sum_{k=(j-1)p+(i+1)}^{jp-(i+1)}m_{k}(\lambda)=h-i$. It means $\SFOUR{j-1}{h+1-i}$ is violated for $\lambda$.
\end{proof}


\begin{Cor}\label{pcrysconcat}
For $j\geq 1$, we have a unique $\AI{p}$-precrystal structure on $S_{p}^{[(j-1)p+1,jp-1]}$ such that the obvious injection 
\begin{align*}
S_{p}^{[(j-1)p+1,jp-1]} &\LONGHOOKRIGHTARROW S_p^{[jp-h,jp-1]}\otimes S_p^{[(j-1)p+1,(j-1)p+h]},\\
\lambda &\LONGMAPSTO \RES{\lambda}{jp-h}{jp-1}\otimes\RES{\lambda}{(j-1)p+1}{(j-1)p+h},
\end{align*}
is an $\AII{p}$-precrystal morphism and $\KF_h$ is given by
\begin{align*}
\KF_h\lambda =
\lambda\setminus\{(j-1)p+h\}\sqcup\{jp-h\},\quad\textrm{if $(j-1)p+h\in\lambda$ and $jp-h\not\in\lambda$}.
\end{align*}
\end{Cor}

\begin{proof}
By Proposition \ref{crystraii} and its $\KE_i$ version, for which we do not duplicate a proof,
we just need to show
$\KF_h\lambda\in S_{p}^{[(j-1)p+1,jp-1]}$ (resp. $\KE_h\lambda\in S_{p}^{[(j-1)p+1,jp-1]}$)
for $\lambda\in S_p^{[(j-1)p+1,jp-1]}$ with $(j-1)p+h\in\lambda$, $jp-h\not\in\lambda$ (resp. $(j-1)p+h\not\in\lambda$, $jp-h\in\lambda$).
For this purpose, it is enough to check that $\SFOUR{j-1}{1}$ holds for $\KF_h\lambda$ (resp. $\KE_h\lambda$), which is trivial.
\end{proof}

\subsection{A behavior of Kashiwara operators}
Recall that $p=2h+1\geq 3$ is an odd integer.
A box in a row of a Young diagram has a residue obeying a pattern
$\young(01\cdots h\cdots 1001\cdots)$.
In other words, a box located at
$(jp+(i+1))$-th (resp. $((j+1)p-i)$-th) column has residue $i$ 
for $0\leq i\leq h$ and $j\geq 0$.
\begin{Prop}\label{onebox}
Let $j\geq 1$, and assume $\KF_i(\lambda\otimes\mu)=\lambda'\otimes\mu'$
for $\lambda,\lambda'\in S_p^{[jp+1,jp+h]}$, $\mu,\mu'\in S_p^{[jp-h,jp-1]}$ and
$i\in \AII{p}$.
Then, $\INVMAP{j}(\lambda',\mu')$ is obtained by adding a residue $i$-box to
$\INVMAP{j}(\lambda,\mu)$.
\end{Prop}

\begin{proof}
We need to consider 4 cases.
\begin{enumerate}
\item\label{caseX} $i=0$ and $\varphi_0(\lambda)=1$, $\varepsilon_0(\mu)=0$, i.e., $jp+1\not\in\lambda$, $jp-1\in\mu$.
Then, $\lambda'=\lambda\sqcup\{jp+1\}$, $\mu'=\mu$.
\item\label{caseY} $i=0$ and $\varphi_0(\lambda)=0$, $\varphi_0(\mu)=1$, i.e., $jp+1\in\lambda$, $jp-1\in\mu$.
Then, $\lambda'=\lambda$, $\mu'=\mu\setminus\{jp-1\}$. 
\item\label{caseZ} $1\leq i<h$ and $\varphi_i(\lambda)=1$, $\varepsilon_i(\mu)=0$, 
i.e., $jp+i\in\lambda$, $jp+(i+1)\not\in\lambda$ and
$\NOT(jp-i\in\mu,jp-(i+1)\not\in\mu)$. 
Then, $\lambda'=\lambda\setminus\{jp+i\}\sqcup\{jp+i+1\}$, $\mu'=\mu$.
Here, for two conditions $A$ and $B$, $\NOT(A,B)$ means that we do not have them simultaneously.
\item\label{caseW} $1\leq i<h$ and $\varphi_i(\lambda)=0$, $\varphi_i(\mu)=1$, 
i.e., $\NOT(jp+i\in\lambda,jp+(i+1)\not\in\lambda)$ and 
$jp-i\not\in\mu$, $jp-(i+1)\in\mu$. Then, $\lambda'=\lambda$, $\mu'=\mu\setminus\{jp-(i+1)\}\sqcup\{jp-i\}$.
\end{enumerate}
In each case, we see the claim holds as in \S\ref{pfcaseX}, \S\ref{pfcaseY}, \S\ref{pfcaseZ}, \S\ref{pfcaseW}.
\end{proof}

In the following, we define
$b_i=b_i(\lambda,\mu)$, $b'_i=b_i(\lambda',\mu')$ for $0\leq i\leq h$ and $j_0=j_0(\lambda,\mu)$, $j'_0=j'_0(\lambda',\mu')$, $b=b_{j_0}$, $b'=b'_{j'_0}$, $\MM{k}=m_{jp+k}(\INVMAP{j}(\lambda,\mu))$, $\MMM{k}=m_{jp+k}(\INVMAP{j}(\lambda',\mu'))$ for $-h\leq k\leq h$ (see Definition \ref{invdef}).
It is enough to show that the difference between $(\MM{k})_{k=-h}^{h}$ and $(\MMM{k})_{k=-h}^{h}$ is
one of the following for some $L\geq 0$ (resp. $L=0$) in case (\ref{caseX}), (\ref{caseY})
(resp. case (\ref{caseZ}), (\ref{caseW})).
\begin{itemize}
\item[(A)] $(\MM{i},\MM{i+1})=(L+1,0)$, $(\MMM{i},\MMM{i+1})=(L,1)$, and
  $\MM{j}=\MMM{j}$ for $j\ne i,i+1$.
\item[(B)] $(\MM{-(i+1)},\MM{-i})=(1,L)$, $(\MMM{-(i+1)},\MMM{-i})=(0,L+1)$, and
  $\MM{j}=\MMM{j}$ for $j\ne -(i+1),-i$.
\end{itemize}

\subsubsection{Proof in case (\ref{caseX})}\label{pfcaseX}
Since $b'_k=b_k+1$ for $k\geq 1$ and $b_1=0$, we consider:
\begin{itemize}
\item $b=0$. Then, $b'=1$, $j_0=0$, $j'_0=1$.
\item $b\geq 1$. Then, $b'=b+1$, $j_0=j'_0\geq 2$.
\end{itemize}
In each case, $\MMM{0}=\MM{0}+1$, $\MM{1}=\MMM{1}=\MMM{-1}=0$, $\MM{-1}=1$. 
Thus, (B) occurs. 

\subsubsection{Proof in case (\ref{caseY})}\label{pfcaseY}
We have $b'_k=b_k-1$ for $k\geq 1$ and $b_1=1$. The rest is similar to \S\ref{pfcaseX}, and we see
that (A) occurs. 
Because it is straightforward, we omit a detail.

\subsubsection{Proof in case (\ref{caseZ})}\label{pfcaseZ}
Let $x=b_{i-1}=b'_{i-1}$.
We divide (\ref{caseZ}) into 3 cases as follows, and we see
that one of (A) or (B) occurs. 
\begin{itemize}
\item $jp-i\not\in\mu$, $jp-(i+1)\in\mu$. 
Note $\MM{i}=1$, $\MM{-i}=0$, $\MM{i+1}=0$, $\MM{-(i+1)}=1$. Since $b'_i=x-1$, $b_i=b_{i+1}=b'_{i+1}=x$, $b_k=b'_{k}$ for 
$k>i+1$, we consider:
\begin{itemize}
\item[\SPE] $j_0=j'_0<i$, $b=b'\geq x$. Then, $\MMM{\pm i}=0$, $\MMM{\pm(i+1)}=1$.
\item[\SPE] $j_0=j'_0>i+1$, $b=b'\geq x+1$. Then, $\MMM{\pm i}=1$, $\MMM{\pm(i+1)}=0$.
\end{itemize}
\item $jp-i\not\in\mu$, $jp-(i+1)\not\in\mu$.
Note $\MM{i}=1$, $\MM{-i}=0$, $\MMM{i+1}=1$, $\MMM{-(i+1)}=0$.
Since $b_i=x$, $b'_i=b_{i+1}=b'_{i+1}=x-1$, $b_k=b'_{k}$ for 
$k>i+1$, we consider:
\begin{itemize}
\item[\SPE] $j_0=j'_0<i$, $b=b'\geq x$. 
Then, $\MM{\pm(i+1)}=0$, $\MMM{\pm i}=0$.
\item[\SPE] $j_0=j'_0>i+1$, $b=b'\geq x+1$. 
Then, $\MM{\pm(i+1)}=1$, $\MMM{\pm i}=1$.
\end{itemize}
\item $jp-i\in\mu$, $jp-(i+1)\in\mu$.
Note $\MM{i+1}=0$, $\MM{-(i+1)}=1$, $\MMM{i}=0$, $\MMM{-i}=1$.
Since $b'_i=x$, $b_i=b_{i+1}=b'_{i+1}=x+1$, $b_k=b'_{k}$ for 
$k>i+1$, we consider:
\begin{itemize}
\item[\SPE] $j_0=j'_0<i$, $b=b'\geq x+1$. 
Then, $\MM{\pm i}=1$, $\MMM{\pm(i+1)}=1$.
\item[\SPE] $j_0=j'_0>i+1$, $b=b'\geq x+2$. 
Then, $\MM{\pm i}=0$, $\MMM{\pm(i+1)}=0$.
\item[\SPE] $b=b'=x+1$, $j_0=i$, $j'_0=i+1$. 
Then, $\MM{\pm i}=0$, $\MMM{\pm(i+1)}=0$.
\end{itemize}
\end{itemize}

\subsubsection{Proof in case (\ref{caseW})}\label{pfcaseW}
The argument is similar to \S\ref{pfcaseZ}, and we see
that one of (A) or (B) occurs. 
Because it is straightforward, we omit a detail.


\subsection{Precrystal structures via zigzag property}\label{obinj}
By Corollary \ref{ketsuron} and 
Corollary \ref{pcrysconcat},
$S_p$ has a precrystal structure
imported from $\PRODPERFCRY$.

\begin{Thm}\label{finonebox}
If $\KF_i\lambda$ is defined (i.e., $\CONC(\KF_i)\lambda\ne\ZERO$) for $\lambda\in S_p$ and $i\in\AI{p}$, then it is obtained by adding a residue $i$-box to $\lambda$.
\end{Thm}

\begin{proof}
For $i\in \AII{p}$, the statement is the same as Proposition \ref{onebox}.
Put $\lambda^{(j)}=\lambda|_{[jp-h,jp+h]}$ for $j\geq 0$. Note that $\lambda^{(0)}=(\lambda^{(0)})^+$.
Assume that in
\begin{align*}
\KF_h(\cdots\otimes((\lambda^{(2)})^-\otimes(\lambda^{(1)})^+)\otimes((\lambda^{(1)})^-\otimes\lambda^{(0)})),
\end{align*}
$\KF_h$ is applied to $(\lambda^{(j)})^-\otimes(\lambda^{(j-1)})^+$ for some $j\geq 1$. 
In particular, 
we have
$\KF_h((\lambda^{(j)})^-\otimes(\lambda^{(j-1)})^+)=\mu\otimes\nu$, 
where $\mu=(\lambda^{(j)})^-\sqcup\{jp-h\}$ and $\nu=(\lambda^{(j-1)})^+\setminus\{(j-1)p+h\}$.
To prove the statement for $i=h$, it is enough to show 
\begin{align}
\begin{split}
  \INVMAP{j}((\lambda^{(j)})^+,\mu) &= \INVMAP{j}((\lambda^{(j)})^+,(\lambda^{(j)})^-)\sqcup\{jp-h\}, \\
\INVMAP{j-1}(\nu,(\lambda^{(j-1)})^-) &= \INVMAP{j-1}((\lambda^{(j-1)})^+,(\lambda^{(j-1)})^-)\setminus\{(j-1)p+h\}.
\end{split}
\label{onebox3}
\end{align}
For this purpose, it is enough to show (see Definition \ref{invdef})
\begin{align}
j_0((\lambda^{(j)})^+,\mu) &= j_0((\lambda^{(j)})^+,(\lambda^{(j)})^-)<h,\label{onebox1} \\
j_0(\nu,(\lambda^{(j-1)})^-) &= j_0((\lambda^{(j-1)})^+,(\lambda^{(j-1)})^-)<h.\label{onebox2}
\end{align}
We may assume $j\geq 2$ when proving \eqref{onebox2}.
Now \eqref{onebox1} (resp. \eqref{onebox2}) follows from $jp+h\not\in(\lambda^{(j)})^+$ (resp. $(j-1)p-h\not\in(\lambda^{(j-1)})^-$), which is shown as follows.

Assume $jp+h\in(\lambda^{(j)})^+$ (resp. $(j-1)p-h\in(\lambda^{(j-1)})^-$). 
If $(j+1)p-h\not\in(\lambda^{(j+1)})^-$ (resp. $(j-2)p+h\not\in(\lambda^{(j-2)})^+$), then
it contradicts the assumption that $\KF_h$ is applied to $(\lambda^{(j)})^-\otimes(\lambda^{(j-1)})^+$ 
(see Remark \ref{algtensor}). 
If $(j+1)p-h\in(\lambda^{(j+1)})^-$ (resp. $(j-2)p+h\in(\lambda^{(j-2)})^+$), then it contradicts
$\CONCAT{(\lambda^{(j+1)})^-}{(\lambda^{(j)})^+}\in S_p$ (resp. $\CONCAT{(\lambda^{(j-1)})^-}{(\lambda^{(j-2)})^+}\in S_p$) 
because $\SFOUR{j}{1}$ (resp. $\SFOUR{j-2}{1}$) is violated.
\end{proof}


\section{Kashiwara crystal theory}\label{kcsec}
We recall the necessary Lie theory for proof of Theorem \ref{maintheorem2}, yet basically, we assume that readers are familiar with Kac-Moody Lie algebras, quantum groups and Kashiwara crystal theory (\cite{Kac},~\cite{Kas}, ~\cite{Lus} are standard references).

\subsection{Kashiwara crystals}\label{kacr}
For a generalized Cartan matrix (GCM, for short) $A$ (see ~\cite[\S1.1]{Kac}), we fix a Cartan data, which is a 4-tuple $(P,P^{\CHECK},\Pi,\Pi^{\CHECK})$ such that
\begin{enumerate}
\item $P$ is a free $\Z$-module of rank $2|I|-\RANK A$, and $P^{\CHECK}=\HOM_{\Z}(P,\Z)$,
\item $\Pi=\{\alpha_i\mid i\in I\}$ are $\Z$-linearly independent elements in $P$,
\item $\Pi^{\CHECK}=\{h_i\mid i\in I\}$ are $\Z$-linearly independent elements in $P^{\CHECK}$,
\item $a_{ij}=\alpha_j(h_i)$ for $i,j\in I$.
\end{enumerate}

We denote by $\langle,\rangle:P^{\CHECK}\times P\to\Z$ the canonical pairing, and
denote by $P^+ = \{\lambda\in P\mid \textrm{$\langle h_i,\lambda\rangle\geq0$ for $i\in I$}\}$ the set of dominant integral weights.

\begin{Def}[{\cite[\S4.2]{Kas}}]\label{defkcr}
Let $A=(a_{ij})_{i,j\in I}$ be a GCM. 
An $A$-crystal is a $P$-weighted $I$-precrystal $B$, i.e., it is equipped with a map 
$\WT:B\to P$
such that
\begin{enumerate}
\item $\varphi_i(b)-\varepsilon_i(b)=\langle h_i,\lambda\rangle$ for $b\in B_{\lambda}:=\{b\in B\mid \WT(b)=\lambda\}$,
\item $\CONC(\KE_i)B_{\lambda}\subseteq B_{\lambda+\alpha_i}\sqcup\{\ZERO\}$ and $\CONC(\KF_i)B_{\lambda}\subseteq B_{\lambda-\alpha_i}\sqcup\{\ZERO\}$,
\end{enumerate}
where $i\in I$ and $\lambda\in P$.
\end{Def}

An $A$-crystal morphism $g:B\to B'$ is defined to be a precrystal 
morphism (see \S\ref{pcrcat}) such that $\WT(g(b))=\WT(b)$ for $b\in B$.
The tensor product precrystal $B_1\otimes B_2$ of $A$-crystals $B_1$ and $B_2$
is  an $A$-crystal by
$\WT(b_1\otimes b_2)=\WT(b_1)+\WT(b_2)$.

\subsection{Classical crystals}
Let $A=(a_{ij})_{i,j\in I}$ be an affine GCM as in ~\cite[Table Aff 1, Aff 2, Aff 3]{Kac}.
There exist unique $(a_i)_{i\in I},(a^{\CHECK}_i)_{i\in I}\in \mathbb{Z}^I_{\geq 1}$ characterized by $\GCD(a_i\mid i\in I)=1=\GCD(a^{\CHECK}_i\mid i\in I)$ and
$\sum_{i\in I}a^{\CHECK}_ia_{ij}=0=\sum_{i\in I}a_{ji}a_i$ for $j\in I$.

We take $\AIZ=0$ (resp. $\AIZ=n$)
if $A\ne A^{(2)}_{2n}$ (resp. $A=A^{(2)}_{2n})$
so that we always have $a_{\AIZ}=1$.
We also fix a Cartan data $(P,P^{\CHECK},\Pi,\Pi^{\CHECK})$ for $A$ as
\begin{align*}
P = \bigoplus_{i\in I}\Z\Lambda_i\oplus\Z\delta &\supseteq \Pi=\{\alpha_i=\sum_{j\in I}a_{ji}\Lambda_j+\delta_{i,\AIZ}\cdot\delta\},\\
P^{\CHECK} = \HOM_{\Z}(P,\Z) &\supseteq \Pi^{\CHECK}=\{h_i\mid i\in I\}\textrm{ by $\langle h_i,\Lambda_j\rangle=\delta_{ij}$ and $\langle h_i,\delta\rangle=0$}.
\end{align*}

Note that $P^+=\bigoplus_{i\in I}\Z_{\geq0}\Lambda_i\oplus\Z\delta\supseteq\Pi$ and $\delta=\sum_{i\in I}a_i\alpha_i$ in $P$.
We set $\PCL=P/\Z\delta$, and reserve $\CL:P\twoheadrightarrow\PCL$.
We also define the dual $\PCLC$ of $\PCL$ by
\begin{align*}
\PCLC=\HOM_{\Z}(\PCL,\Z)\cong\{h\in P^{\CHECK}\mid\langle h,\delta\rangle=0\}=\bigoplus_{i\in I}\Z h_i\subseteq P^{\CHECK}.
\end{align*}
Set $c=\sum_{i\in I}a^{\CHECK}_ih_i\in\PCLC\subseteq P^{\CHECK}$ so that $\langle c,\CL(\alpha_i)\RANGLE=0=\langle c,\alpha_i\rangle$ for $i\in I$, where
$\langle,\RANGLE:\PCLC\times\PCL\to\Z$ is the induced canonical pairing. 
Note that $\{h_i\mid i\in I\}\subseteq\PCLC$ are linearly independent, whereas $\{\CL(\alpha_i)\mid i\in I\}\subseteq\PCL$ are not.

\begin{Def}[{\cite[\S3.3]{KKMMNN2}}]
Let $A$ be an affine GCM. 
A classical $A$-crystal is defined by
replacing $P$ and $\langle,\rangle$ in Definition \ref{defkcr} with $\PCLL$ and $\langle,\RANGLE$, respectively.
\end{Def}

\subsection{Perfect crystals}\label{PERF}
When $A$ is affine, the highest weight crystal $B(\lambda)$ for $\lambda\in P^+$
can be realized as an infinite tensor product 
of level $\ell=\langle c,\lambda\rangle$ perfect crystal. 

\begin{Def}[{\cite[Definition 4.6.1]{KKMMNN2}}]
Let $A$ be an affine GCM and let $\ell\in\Z_{\geq 1}$.
We say that a classical $A$-crystal $B$ is a perfect crystal of level $\ell$ if
\begin{enumerate}
\item there exists a finite-dimensional integrable $U'_q$-module with a crystal base, which is isomorphic to $B$ (we omit an
  explanation in detail),
\item the crystal graph $\CG(B\otimes B)$ is connected (see Remark \ref{crgr}),
\item there exists $\lambda_0\in\PCLL$ such that $\WT(B)\subseteq \lambda_0+\sum_{i\ne \AIZ}\Z_{\leq 0}\CL(\alpha_i)$ and
$|B_{\lambda_0}|=1$,
\item $\langle c,\varepsilon(b)\RANGLE\geq \ell$ for $b\in B$,
\item the restrictions of $\varepsilon,\varphi:B\to\PCLL^+$ to 
$B_{\min}:=\{b\in B\mid \langle c,\varepsilon(b)\RANGLE=\ell\}$ are both 
  bijections to $(\PCLL^+)_{\ell}:=\{\lambda\in\PCLL^+\mid\langle c,\lambda\RANGLE=\ell\}$, where
\begin{align*}
\varepsilon(b)=\sum_{i\in I}\varepsilon_i(b)\CL(\Lambda_i),\quad
\varphi(b)=\sum_{i\in I}\varphi_i(b)\CL(\Lambda_i).
\end{align*}
\end{enumerate}
\end{Def}

It is conjectured (see ~\cite[Conjecture 2.1]{HKOTY} and ~\cite[Conjecture 2.1]{HKOTZ}) that 
\begin{enumerate}
\item 
Kirillov-Reshetikhin module $\KRM{i}{s}$ 
has a crystal base $\KRC{i}{s}$ (KR crystal, for short), where $i\in I\setminus\{\AIZ\}$ and $s\geq 1$,
\item $\KRC{i}{s}$ is perfect of level $s/t_i$ if $s/t_i\in\Z$, 
  where $t_i=\max(1,a_i/a^{\CHECK}_i)$. 
\end{enumerate}
They are settled affirmatively when $A$ is nonexceptional~\cite{FOS,OS}.
It is also expected~\cite{KNO} that any perfect crystal is a tensor product (see ~\cite{OSS}) of KR crystals.

In the paper, KR crystal $\KRC{n}{2}$ for $A=\AD{2n}$ (the Landlands dual of $A^{(2)}_{2n}$, see Figure \ref{twisted}) plays a crucial role. 
Recall our convention for $\AD{2n}$. In particular, 
\begin{align}
I=\{0,1,\dots,n\},\quad
\AIZ=0,\quad
t_s=\max(1,a_i/a^{\CHECK}_i)=1+\delta_{n,s},
\label{convdual}
\end{align}
where $s\in I\setminus\{\AIZ\}$.
The precrystal structure of $\KRC{n}{2}$ was determined by Jing-Misra-Okado~\cite[\S3]{JMO}, building on a work of Kashiwara-Nakashima~\cite[\S5]{KN}.

\begin{figure}
\[
\begin{array}{r@{\quad}l@{\qquad}l@{\quad}l}
A_2^{(2)} &\node{}{0} {\quad\!\!\!\!}\Llleftarrow{\quad\!\!\!\!\!} \node{}{1} &
A_{2n}^{(2)}  &\node{}{0}\Leftarrow \node{}{1}-\cdots-\node{}{{n-1}}\Leftarrow\node{}{{n}} \\
\AD{2} &\node{}{0} {\quad\!\!\!\!\!\!\!\!}\Rrrightarrow{\quad\!\!\!\!\!} \node{}{1} & 
\AD{2n}  &\node{}{0}\Rightarrow \node{}{1}-\cdots-\node{}{{n-1}}\Rightarrow\node{}{{n}} \\
D_\ell& \node{}{\alpha_1} - \node{}{\alpha_2} - \cdots - \node{\ver{}{\alpha_{\ell-1}}}{\alpha_{\ell-2}} -\node{}{\alpha_{\ell}} &
D_{n+1}^{(2)}  &\node{}{0}\Leftarrow \node{}{1}-\cdots-\node{}{n-1}\Rightarrow\node{}{n}
\end{array}
\]
\caption{The diagrams $A^{(2)}_{\textrm{even}}$, $\AD{\textrm{even}}$, $D_\ell$, $D^{(2)}_{n+1}$ ($n\geq 2, \ell\geq 4$)}
\label{twisted}
\end{figure}

\begin{Def}[{\cite[\S5.2, (5.3.2)]{KN}}]\label{fundcry}\label{knb}
For $n\geq 1$, we put $\KNB{n}{1}=\{\NODE{k}\mid -n\leq k\leq n\}$, 
and define its $(I\setminus\{\AIZ\})$-precrystal structure so that
\begin{align*}
\NODE{1}\stackrel{1}{\to}
\NODE{2}\stackrel{2}{\to}
\cdots\stackrel{n-2}{\to}
\NODE{n-1}\stackrel{n-1}{\to}
\NODE{n}\stackrel{n}{\to}
\NODE{0}\stackrel{n}{\to}
\NODE{-n}\stackrel{n-1}{\to}
\NODE{-(n-1)}\stackrel{n-2}{\to}
\cdots
\stackrel{2}{\to}
\NODE{-2}\stackrel{1}{\to}
\NODE{-1}
\end{align*}
is the crystal graph (see Remark \ref{crgr}).
For $1\leq s\leq n$, let
$\KNB{n}{s}$ is a subset of $(\KNB{n}{1})^{\otimes s}$
consists of $\NODE{i_1}\otimes\cdots\otimes\NODE{i_s}$ such that
\begin{enumerate}
\item $i_j\ORD i_{j+1}$ for $1\leq j<s$, and $i_j=i_{j+1}$ implies $i_j=0$, 
\item\label{knb2} $i_k=t,i_{\ell}=-t$ for $1\leq k<\ell\leq s$ and $1\leq t\leq n$ implies $\ell-k>s-t$.
\end{enumerate}
Here, $\ORD$ is a total order on $\{-n,\dots,n\}$ by $1\ORD\cdots\ORD n\ORD 0\ORD -n\ORD\cdots\ORD -1$.
\end{Def}

\begin{Prop}\label{KNsametime}
For $u=\NODE{i_1}\otimes\cdots\otimes\NODE{i_n}\in \KNB{n}{n}$, at least one of $\NODE{n}$, $\NODE{0}$, $\NODE{-n}$ is in $u$.
\end{Prop}

\begin{proof}
Assume the contrary, and we see $n\geq 2$ and that there exist $1\leq d<n$ and $1\leq k<\ell\leq n$ such that $i_k=d,i_{\ell}=-d$.
Take the largest $d$, and we have $\ell-k\leq n-d$ by $d<|i_{k+1}|,\dots,|i_{\ell-1}|<n$.
It contradicts Definition \ref{knb} (\ref{knb2}).
\end{proof}

\begin{Prop}[{\cite[Proposition 5.3.1]{KN}},{\cite[Theorem 3.2]{JMO}}]\label{KNJMO}
Let $1\leq s\leq n$.
\begin{enumerate}
\item\label{KNJMOa} There is a unique $(I\setminus\{\AIZ\})$-precrystal structure on $\KNB{n}{s}$ such that the obvious injection
$\KNB{n}{s}\LONGHOOKRIGHTARROW (\KNB{n}{1})^{\otimes s}$
is an $(I\setminus\{\AIZ\})$-precrystal morphism.
\item\label{KNJMOb} Moreover, $\KNB{n}{s}$ becomes a classical $\AD{2n}$-crystal such that $\KNB{n}{s}\cong \KRC{s}{t_s}$, after 
defining suitable weight $\WT:\KNB{n}{s}\to P_{\normalfont{\textrm{cl}}}$ and $\KE_0,\KF_0\in\HOM_{\PFN}(\KNB{n}{s},\KNB{n}{s})$ by
\begin{align*}
\KF_{0}\left(\NODE{i_1}\otimes\cdots\otimes\NODE{i_s}\right) =
\NODE{1}\otimes\NODE{i_1}\otimes\cdots\otimes\NODE{i_{s-1}},\quad\textrm{if $i_s=-1$},
\end{align*}
Note that we never have $(i_1,i_s)=(1,-1)$ by Definition \ref{knb} (\ref{knb2}).
\end{enumerate}
\end{Prop}

\begin{Ex}
The crystal graph $\CG(\KNB{n}{1})$ is obtained by adding a $0$-arrow from $\NODE{-1}$ to $\NODE{1}$
in Definition \ref{fundcry} (see ~\cite[p.481]{KKMMNN2}).
\end{Ex}


\begin{Thm}[{\cite[Theorem 4.5.2]{KKMMNN2}}]\label{kyotopath}
Let $B$ be a perfect crystal of level $\ell$ for an affine GCM $A$.
For $\lambda\in P^+$ with $\langle c,\lambda\rangle=\ell$,
let $\lambda_0=\CL(\lambda)$ and $\lambda_n=\varepsilon(\varphi^{-1}(\lambda_{n-1}))$ for $n\geq 1$.
There is 
a unique classical $A$-crystal isomorphism such that 
\begin{align}
  B(\lambda)
  \ISOM 
  \otimes'_{n\geq 0}(B,\varphi^{-1}(\lambda_n)),\quad
u_{\lambda}\LONGMAPSTO \cdots\otimes\varphi^{-1}(\lambda_1)\otimes \varphi^{-1}(\lambda_0),
\label{kyotopath2}
\end{align}
where $u_{\lambda}$ is the maximum element in $B(\lambda)$.
\end{Thm}


\begin{Cor}\label{kyotointhepaper}
There is a unique $\AD{2n}$-crystal isomorphism such that
\begin{align*}
B(\Lambda_n)\cong \otimes'_{k\geq 0}(\KNB{n}{s},\NODE{0}^{\otimes s}),\quad
u_{\Lambda_n}\LONGMAPSTO\cdots\otimes\NODE{0}^{\otimes s}\otimes\NODE{0}^{\otimes s}
\end{align*}
for any $1\leq s\leq n$.
\end{Cor}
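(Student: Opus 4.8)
The plan is to obtain this as an immediate specialization of the iterated Kyoto path realization \eqref{kyoto}, taking the perfect crystal $B=\KNB{n}{s}$ and the dominant weight $\lambda=\Lambda_n$. First I would record the two inputs needed for Theorem \ref{kyotopath} to apply. By Proposition \ref{KNJMO} (\ref{KNJMOb}) together with \eqref{convdual}, $\KNB{n}{s}\cong\KRC{s}{t_s}$ with $t_s=1+\delta_{n,s}$, so the classification of Kirillov-Reshetikhin crystals for nonexceptional affine types recalled in \S\ref{KNJMOS} makes $\KNB{n}{s}$ a perfect crystal of level $t_s/t_s=1$. Moreover, since $\AD{2n}$ is the Langlands dual of $A^{(2)}_{2n}$, whose affine node is $\AIZ=n$ with $a_n=1$, the comark $a^{\CHECK}_n$ of $\AD{2n}$ equals $1$, so $\langle c,\Lambda_n\rangle=a^{\CHECK}_n=1$ matches the level.

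Next I would identify the ground-state path explicitly, by showing $\varepsilon_i(\NODE{0}^{\otimes s})=\varphi_i(\NODE{0}^{\otimes s})=\delta_{i,n}$ for all $i\in I$, hence $\varepsilon(\NODE{0}^{\otimes s})=\varphi(\NODE{0}^{\otimes s})=\CL(\Lambda_n)$ and $\NODE{0}^{\otimes s}\in B_{\min}$. For $1\leq i<n$ and for $i=0$ this is immediate: Definition \ref{fundcry} shows $\KE_i\NODE{0}=\KF_i\NODE{0}=\ZERO$, and the formulas for $\KE_0,\KF_0$ in Proposition \ref{KNJMO} (\ref{KNJMOb}) kill $\NODE{0}^{\otimes s}$. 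For $i=n$, Definition \ref{fundcry} gives $\varepsilon_n(\NODE{0})=\varphi_n(\NODE{0})=1$, so the $n$-signature of $\NODE{0}^{\otimes s}$ consists of $s$ copies of the block $-+$, which reduces to $-+$ (Remark \ref{algtensor}), giving $\varepsilon_n(\NODE{0}^{\otimes s})=\varphi_n(\NODE{0}^{\otimes s})=1$. Since $\varphi$ restricts to a bijection $B_{\min}\to(\PCL^+)_1$, this says exactly that $\varphi^{-1}(\CL(\Lambda_n))=\NODE{0}^{\otimes s}$ and $\varepsilon(\varphi^{-1}(\CL(\Lambda_n)))=\CL(\Lambda_n)$.

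Finally I would feed this into \eqref{kyoto}: the sequence $\lambda_0=\CL(\Lambda_n)$, $\lambda_k=\varepsilon(\varphi^{-1}(\lambda_{k-1}))$ is then constant equal to $\CL(\Lambda_n)$ by induction on $k$, with $\varphi^{-1}(\lambda_k)=\NODE{0}^{\otimes s}$ for every $k\geq 0$, which produces the asserted classical $\AD{2n}$-crystal isomorphism $\BCL(\CL(\Lambda_n))\cong\otimes'_{k\geq 0}(\KNB{n}{s},\NODE{0}^{\otimes s})$; iterating the assignment $u_{\CL(\lambda)}\LONGMAPSTO u_{\varepsilon(\varphi^{-1}(\lambda))}\otimes\varphi^{-1}(\lambda)$ in \eqref{kyotopath2} shows $u_{\CL(\Lambda_n)}\LONGMAPSTO\cdots\otimes\NODE{0}^{\otimes s}\otimes\NODE{0}^{\otimes s}$, and uniqueness is inherited from the uniqueness clause of Theorem \ref{kyotopath}. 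The genuinely nonformal points I expect to rely on are the level-$1$ perfectness of $\KNB{n}{s}$ and the comark identity $a^{\CHECK}_n=1$ for $\AD{2n}$; the middle paragraph (identifying $\NODE{0}^{\otimes s}$ as the ground state) is the technical heart but amounts to a short unwinding of the tensor product rule.
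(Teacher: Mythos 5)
Your proposal is correct and follows essentially the same route as the paper: the paper's proof consists precisely of the observation that, given the level-one perfectness of $\KNB{n}{s}$ recorded in \S\ref{KNJMOS}, the only thing to verify is $\varphi(\NODE{0}^{\otimes s})=\CL(\Lambda_n)=\varepsilon(\NODE{0}^{\otimes s})$ via Definition \ref{fundcry} and the signature rule of Remark \ref{algtensor}, after which \eqref{kyotopath2}/\eqref{kyoto} give the constant ground-state path and the isomorphism. Your additional checks (the level computation $t_s/t_s=1$ and the comark identity $\langle c,\Lambda_n\rangle=a^{\CHECK}_n=1$) are correct and merely make explicit what the paper leaves implicit.
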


\begin{proof}
  Because it is known that
  \eqref{kyotopath2} is lifted to an $A$-crystal isomorphism~\cite[Proposition 4.5.4]{KKMMNN2} by the energy function, it is enough to check $\varphi(\NODE{0}^{\otimes s})=\CL(\Lambda_n)=\varepsilon(\NODE{0}^{\otimes s})$, which follows from
Definition \ref{fundcry} and Remark \ref{algtensor}.
\end{proof}

\subsection{Proof of Theorem \ref{maintheorem2}}\label{pfsec}

By Corollary \ref{kyotointhepaper} for $s=n$ and Theorem \ref{finonebox},
it is enough to show 
Proposition \ref{finprop} (\ref{finprop3})
to prove Theorem \ref{maintheorem2}.

\begin{Prop}\label{finprop}
Let $h\geq 1$ and $p=2h+1$.
For $j\geq 1$, we define a bijection
\begin{align*}
r_j:\KNB{h}{1}\setminus\{\NODE{0}\} &\ISOM \{(j-1)p+1,\dots,jp-1\},\\
  \NODE{i} &\LONGMAPSTO
\begin{cases}
jp-(h+1-i) & \textrm{if $1\leq i\leq h$},\\
(j-1)p+(h+1+i) & \textrm{if $-h\leq i\leq-1$}.
\end{cases}
\end{align*}
\begin{enumerate}
\item\label{finprop1}
For $1\leq s\leq h$, 
  the map $\NONAME{h}{s}:\KNB{h}{s}\LONGTO S_p^{[(j-1)p+1,jp-1]}$ 
\begin{align*}
\NODE{i_1}\otimes\cdots\otimes\NODE{i_s}\LONGMAPSTO (r_j(i_{a-1}),\dots,r_j(i_1),r_j(i_s),\dots,r_j(i_{b+1}))
\end{align*}
is well-defined.
Here, $1\leq a\leq s+1$ (resp. $0\leq b\leq s$) is defined by
the condition $i_k>0\Leftrightarrow 1\leq k<a$ (resp. $i_k<0\Leftrightarrow b<k\leq s$). 
\item\label{covar} For $i\in \AI{p}\setminus\{h\}$ (see \eqref{aidex}), $\NONAME{h}{s}$ satisfies
\begin{align}
\begin{split}
\CONC(\KE_{h-i})\circ \CONCC(\NONAME{h}{s}) &= \CONCC(\NONAME{h}{s})\circ\CONC(\KE_{i}),\\
\CONC(\KF_{h-i})\circ\CONCC(\NONAME{h}{s}) &= \CONCC(\NONAME{h}{s})\circ\CONC(\KF_{i}).
\end{split}
\label{covarr}
\end{align}
\item\label{finprop3} For $s=h$, (\ref{covarr}) also holds for $i=h$, and $\NONAME{h}{h}$ is a bijection.
\end{enumerate}
\end{Prop}

\begin{proof}
We fix $u=\NODE{i_1}\otimes\cdots\otimes\NODE{i_s}\in \KNB{h}{s}$, and take $a$, $b$ as in (\ref{finprop1}). Note that we have $i_k=0$ if and only if $a\leq k\leq b$.

Recall \eqref{charS}.
To prove (\ref{finprop1}), put $\lambda=\NONAME{h}{s}(u)$, and
assume the contrary that there exist $1\leq x<y\leq \ell(\lambda)(\leq s)$ such that 
$\lambda_x=(j-1)p+(h+d)=jp-(h+1-d)$, $\lambda_y=(j-1)p+(h+1-d)$
for $d=y-x$. Note that $a\ne1$ and $b\ne s$.
Take $1\leq x'<a$ and $b<y'\leq s$ such that $r_j(i_{x'})=\lambda_x$ and $r_j(i_{y'})=\lambda_y$.
The assumption implies 
$i_{y'}=-d,i_{x'}=d$ with $y'-x'= s-d$, which contradicts Definition \ref{knb} (\ref{knb2}).

By Corollary \ref{pcrysconcat} and Proposition \ref{KNJMO} (\ref{KNJMOb}),
we have \eqref{covarr} for $i=0$.
To prove \eqref{covarr} for $i\in\AII{p}\setminus\{0,h\}$
(resp. $i=h$ and $s=h$ as in \eqref{finprop3}), we divide 16 (resp. $7=8-1$, see Proposition \ref{KNsametime}) cases
depending on each of $i$, $i+1$, $-(i+1)$, $-i$ (resp. $h$, $0$, $-h$)
is in $u$ or not.
Because it is straightforward
to check (\ref{covarr}) in each case by
Definition \ref{pcryplus},
Definition \ref{pcryminus},
Corollary \ref{pcrysconcat},
Definition \ref{fundcry},
Proposition \ref{KNJMO} (\ref{KNJMOa})
and
Remark \ref{algtensor},
we omit a detail.

To finish proving \eqref{finprop3}, it suffices to show that a map $S_p^{[(j-1)p+1,jp-1]}\to\KNB{h}{h}$ 
\begin{align*}
(\lambda_1,\dots,\lambda_{\ell}) \mapsto r_j^{-1}(\lambda_w)\otimes\cdots\otimes r_j^{-1}(\lambda_1)\otimes\NODE{0}^{\otimes(h-\ell)}
\otimes r_j^{-1}(\lambda_{\ell})\otimes\cdots\otimes r_j^{-1}(\lambda_{w+1})
\end{align*}
is well-defined, where $0\leq w\leq \ell$ is defined by the condition $jp-h\leq\lambda_i\leq jp-1\Leftrightarrow 1\leq i\leq w$.
The argument is similar to (\ref{finprop1}), and uses \eqref{charS} and Definition \ref{knb} (\ref{knb2}).
\end{proof}

\subsection{Proof of Theorem \ref{maintheorem}}\label{finalproof}
It is immediate
by virtue of Theorem \ref{maintheorem2} (see also ~\cite[(14.4.4)]{Kac}).


\section{Combinatorics of $\SCHUR_p$}\label{defsch}
\subsection{Definition of $\SCHUR_{p,\GOOD}$}\label{defsch1}
Let $p=2h+1\geq 3$ be an odd integer. 
Recall \S\ref{noco}. 
Assume that $\GOOD:S_p^{[p-h,p+h]}\hookrightarrow \STRICT^{[p-h,p+h]}$ is an injection such that 
\begin{align}
  \textrm{$\beta(\varnothing)=\varnothing$ and $\beta(S_p^{[p+1,p+h]})\subseteq\STRICT^{[p+1,p+h]}$}. 
\label{goodbeta}
\end{align}
Note $S_p^{[p+1,p+h]}=\STRICT^{[p+1,p+h]}$ (see Lemma \ref{ideal3}).
We have a well-defined injection 
\begin{align*}
  \BETAAA_{\GOOD}:S_p\LONGHOOKRIGHTARROW \STRICT,\quad
  \lambda \LONGMAPSTO \dots\sqcup\beta^{(2)}(\lambda|_{[2p-h,2p+h]})\sqcup\beta^{(1)}(\lambda|_{[p-h,p+h]})\sqcup\beta^{(0)}(\lambda|_{[1,h]}),
\end{align*}
by Lemma \ref{invshift} and Lemma \ref{weakideal}, where
\begin{align}
\GOOD^{(j)} = \SHIFT^{(j-1)}_{p}\circ\GOOD\circ\SHIFT^{-(j-1)}_{p}:S_p^{[jp-h,jp+h]}\LONGHOOKRIGHTARROW \STRICT^{[jp-h,jp+h]}
  \label{betajei}
\end{align}
for $j\geq 0$. 
Note that $\SHIFT_p^k(\varnothing)=\varnothing$ for $k\in\mathbb{Z}$.

\begin{Def}
We put $\SCHUR_{p,\GOOD}=\BETAAA_{\GOOD}(S_p)$ for $\GOOD$ that
satisfies \eqref{goodbeta}.
\end{Def}

\begin{Rem}\label{SpScpPT}
If $\GOOD$ is size preserving, then obviously we have $\SCHUR_{p,\GOOD}\PT S_p$.
\end{Rem}

\subsection{Pattern avoidance}
In this subsection, we see that
$\SCHUR_{p,\GOOD}$ is characterized by pattern avoidance
for $\GOOD$ satisfying certain conditions (see Proposition \ref{PROPSCHUR}).

\begin{Def}\label{gooddef}
  We say that a map $\GOOD:S_p^{[p-h,p+h]}\to\STRICT^{[p-h,p+h]}$ is good
  if 
\begin{align}
  \textrm{$\varnothing\in\FSS_{p,\GOOD}^{(i)}$, and $(\lambda_1,\dots,\lambda_\ell)\in\FSS_{p,\GOOD}^{(i)}$ implies $(\lambda_q,\dots,\lambda_r)\in\FSS_{p,\GOOD}^{(i)}$}
\label{weakcC}
\end{align}
for $i=1$ and $1\leq q\leq r\leq\ell$. Here, for $i\geq 1$, we put 
  \begin{align*}
    \FSS_{p,\GOOD}^{(i)}=\{\GOOD^{(i+1)}(\nu|_{[(i+1)p-h,(i+1)p+h]})\sqcup\GOOD^{(i)}(\nu|_{[ip-h,ip+h]})
    \mid \nu\in S_p^{[ip-h,(i+1)p+h]}\}. 
  \end{align*}
\end{Def}

\begin{Rem}\label{weakw}
  Note that \eqref{weakcC} holds for $i\geq 1$ if we have it for $i=1$, i.e., $\GOOD$ is good,
  because we have $\SHIFT^a_p(\FSS^{(b)}_{p,\GOOD})= \FSS^{(a+b)}_{p,\GOOD}$ 
for any map $\GOOD:S_p^{[p-h,p+h]}\to\STRICT^{[p-h,p+h]}$, where $a\in\mathbb{Z}$ and $b\geq 1$ such that
$a+b\geq 1$. This follows from the obvious formula
\begin{align*}
  \SHIFT_p^a(\GOOD^{(b+1)}(\mu)\sqcup\GOOD^{(b)}(\nu))
  =\GOOD^{(a+b+1)}(\SHIFT_p^{a}(\mu))\sqcup\GOOD^{(a+b)}(\SHIFT_p^{a}(\nu)))
\end{align*}
for $\mu\in S_p^{[(b+1)p-h,(b+1)p+h]}$, $\nu\in S_p^{[bp-h,bp+h]}$ and
Lemma \ref{invshift}.
Thus, we have $\SHIFT^a_p(\FS^{(b)}_{p,\GOOD})= \FS^{(a+b)}_{p,\GOOD}$. Here, for $i\geq 1$, we define a finite set $\FS^{(i)}_{p,\GOOD}$ by
\begin{align*}
  \FS^{(i)}_{p,\GOOD} = \STRICT^{[ip-h,(i+1)p+h]}\setminus\FSS^{(i)}_{p,\GOOD}.
\end{align*}
\end{Rem}

\begin{Rem}\label{weakw3}
  Assume that a map $\GOOD:S_p^{[p-h,p+h]}\to\STRICT^{[p-h,p+h]}$ satisfies $\beta(S_p^{[p+1,p+h]})\subseteq\STRICT^{[p+1,p+h]}$.
  Since $\GOOD^{(0)}$ is well-defined,
  $\FSS^{(0)}_{p,\GOOD}$ (resp. $\FS^{(0)}_{p,\GOOD}$) is well-defined by the same formula for $\FSS^{(i)}_{p,\GOOD}$ (resp. $\FS^{(i)}_{p,\GOOD}$) for $i\geq 1$.
  It is also easy to see
  $\SHIFT_p(\FSS^{(0)}_{p,\GOOD})\subseteq\FSS^{(1)}_{p,\GOOD}$.
\end{Rem}

\begin{Rem}\label{weakw34}
  In Remark \ref{weakw3}, 
  assume further that the restriction $\beta|_{S_p^{[p+1,p+h]}}:S_p^{[p+1,p+h]}\to\STRICT^{[p+1,p+h]}$
  is bijective. 
  Then, we easily have
  $\SHIFT_p(\FS^{(0)}_{p,\GOOD})\subseteq\FS^{(1)}_{p,\GOOD}$.
  For completeness, we give a proof in the rest of this Remark. It is enough to show that
  $\SHIFT_p(\lambda)=\FSS^{(1)}_{p,\GOOD}$ for $\lambda\in\STRICT^{[1,p+h]}$ implies
  $\lambda\in\FSS^{(0)}_{p,\GOOD}$. Let $\SHIFT_p(\lambda)=\GOOD^{(2)}(\mu)\sqcup\GOOD(\nu)$
  for $\mu\in S_p^{[2p-h,2p+h]}$ and $\nu\in S_p^{[p-h,p+h]}$ such that $\mu\sqcup\nu\in S_p$.
  By $\SHIFT_p(\lambda)\in\STRICT^{[p+1,2p+h]}$ and the assumption of $\GOOD$, we have $\nu\in S_p^{[p+1,p+h]}$ and thus we have $\lambda=\GOOD(\SHIFT_p^{-1}(\mu))\sqcup \GOOD^{(0)}(\SHIFT_p^{-1}(\nu))$.
  By Lemma \ref{invshift}, we have $\SHIFT_p^{-1}(\mu)\sqcup\SHIFT_p^{-1}(\nu)=\SHIFT_p(\mu\sqcup\nu)\in S_p$ and thus we have $\lambda\in\FSS^{(0)}_{p,\GOOD}$.
\end{Rem}

  
\begin{Cor}\label{weakcc}
Assume that a map $\GOOD:S_p^{[p-h,p+h]}\to\STRICT^{[p-h,p+h]}$ is good in the sense of Definition \ref{gooddef}.  
If a strict partition $\lambda$ contains $\SHIFT^{j}_p(\mu)$
for $j\geq 0$ and $\mu\in \FS^{(1)}_{p,\GOOD}$, then we have $\lambda|_{[(j+1)p-h,(j+2)p+h]}\in\FS^{(j+1)}_{p,\GOOD}$.
\end{Cor}

\begin{proof}
  Let $\nu=\lambda|_{[(j+1)p-h,(j+2)p+h]}$ and assume $\nu\in\FSS^{(j+1)}_{p,\GOOD}$. 
  By Remark \ref{weakw}, $\SHIFT_p^{-j}(\nu)\in\FSS^{(1)}_{p,\GOOD}$ and it contains $\mu\in\FS^{(1)}_{p,\GOOD}$. It contradicts the goodness of $\GOOD$.
\end{proof}

\begin{Lem}\label{weakc3}
  Assume that a map $\GOOD:S_p^{[p-h,p+h]}\to\STRICT^{[p-h,p+h]}$ is good in the sense of Definition \ref{gooddef}, satisfies $\GOOD(S_p^{[p+1,p+h]})\subseteq\STRICT^{[p+1,p+h]}$ and
  $\GOOD|_{S_p^{[p+1,p+h]}}:S_p^{[p+1,p+h]}\to\STRICT^{[p+1,p+h]}$ is bijective.
  We have \eqref{weakcC} for $i=0$ (see Remark \ref{weakw3}).
\end{Lem}

\begin{proof}
  Because $\varnothing\in\FSS^{(0)}_{p,\GOOD}$ is obvious, we prove the latter condition
  in \eqref{weakcC} for $i=0$.
  Let $\lambda=(\lambda_1,\dots,\lambda_{\ell})\in\FSS^{(0)}_{p,\GOOD}$ and assume
  $\lambda'=(\lambda_q,\dots,\lambda_r)\in\FS^{(0)}_{p,\GOOD}$. We have $\SHIFT_p(\lambda')\in\FS^{(1)}_{p,\GOOD}$
  and $\SHIFT_p(\lambda)\in\FSS^{(1)}_{p,\GOOD}$ by Remark \ref{weakw3} and Remark \ref{weakw34}, which contradicts
  the goodness of $\GOOD$. Thus, we have $\lambda'\in\FSS^{(0)}_{p,\GOOD}$.
\end{proof}

\begin{Prop}\label{PROPSCHUR}
  Assume that a map $\GOOD:S_p^{[p-h,p+h]}\to\STRICT^{[p-h,p+h]}$ is injective, satisfies \eqref{goodbeta} and good in the sense of Definition \ref{gooddef}.
For $\lambda\in\STRICT$, we have $\lambda\in\SCHUR_{p,\GOOD}$ if and only if $\lambda$ does not contain any $\SHIFT^{k}_p(\mu)$
for $k\geq 0$ and $\mu\in \FS^{(1)}_{p,\GOOD}\cup\FS^{(0)}_{p,\GOOD}$. 
\end{Prop}

\begin{proof}
  Note that the injectivity of $\GOOD$ and \eqref{goodbeta} imply
  that $\GOOD(S_p^{[p+1,p+h]})\subseteq\STRICT^{[p+1,p+h]}$ and
  $\GOOD|_{S_p^{[p+1,p+h]}}:S_p^{[p+1,p+h]}\to\STRICT^{[p+1,p+h]}$ is bijective.
  Note also that, for $\lambda\in\STRICT$, we have $\lambda\in\SCHUR_{p,\GOOD}$ if and only if $\lambda^{\LANGLEL j\RANGLER}=\lambda|_{[(j-1)p-h,jp+h]}$ is not in $\FS^{(j-1)}_{p,\GOOD}$
  (i.e., $\lambda^{\LANGLEL j\RANGLER}\in\FSS^{(j-1)}_{p,\GOOD}$) for all $j\geq 1$ by 
  Lemma \ref{invweakideal} and the injectivity of $\beta$.  

  Assume $\lambda\not\in \SCHUR_{p,\GOOD}$, and take $j$ such that $\lambda^{\LANGLEL j\RANGLER}\in\FS^{(j-1)}_{p,\GOOD}$.
  If $j\geq 2$ (resp $j=1)$, then $\lambda$ contains $\lambda^{\LANGLEL j\RANGLER}=\SHIFT_p^{j-2}(\SHIFT_p^{2-j}(\lambda^{\LANGLEL j\RANGLER}))$ (resp. $\lambda^{\LANGLEL 1\RANGLER}=\SHIFT^0_p(\lambda^{\LANGLEL 1\RANGLER})$), where note $\SHIFT_p^{2-j}(\lambda^{\LANGLEL j\RANGLER})\in\FS^{(1)}_{p,\GOOD}$ (resp.
  $\lambda^{\LANGLEL 1\RANGLER}\in\FS^{(0)}_{p,\GOOD}$) by Remark \ref{weakw}.
  
  Assume that $\lambda$ contains $\SHIFT^{k}_p(\mu)$
  for $k\geq 0$ and $\mu\in \FS^{(1)}_{p,\GOOD}\cup\FS^{(0)}_{p,\GOOD}$.
  If $\mu\in\FS^{(1)}_{p,\GOOD}$, then $\lambda^{\LANGLEL k+2\RANGLER}\in\FS^{(k+1)}_{p,\GOOD}$ by Corollary \ref{weakcc} and we have $\lambda\not\in\SCHUR_{p,\GOOD}$.
  If $\mu\in\FS^{(0)}_{p,\GOOD}$ and $k\geq 1$, then $\SHIFT^k_p(\mu)\in\SHIFT_p^{k-1}(\FS^{(1)}_{p,\GOOD})$ by Remark \ref{weakw34} and we have $\lambda\not\in\SCHUR_{p,\GOOD}$.
  If $\mu\in\FS^{(0)}_{p,\GOOD}$ and $k=0$, then $\lambda^{\LANGLEL 1\RANGLER}\in\FS^{(0)}_{p,\GOOD}$ by Lemma \ref{weakc3} and we have $\lambda\not\in\SCHUR_{p,\GOOD}$.
\end{proof}

\subsection{Definition of $\SCHUR_p$}\label{defsch2}
In this subsection, we
fix $j\geq 0$ as in \S\ref{magicmapsec}. Recall
$b_i=b_i(\mu_+,\mu_-)$ for $\mu_+\in S_p^{[jp+1,jp+h]}$ and $\mu_-\in S_p^{[jp-h,jp-1]}$
in Definition \ref{invdef}.

For $k=0,\dots,b$, where $b=b(\mu_+,\mu_-)=\max\{b_i\mid 0\leq i\leq h\}$,
we define
\begin{align*}
\gamma_k= \min \{0\leq i\leq h\mid b_i=k\},
\end{align*}
and promise $\gamma_{b+1}=h$.
This gives a decomposition of the half interval
\begin{align}
  (0,h]=(\gamma_0,\gamma_1]\sqcup 
      \dots
        \sqcup(\gamma_b,\gamma_{b+1}], \textrm{ where $(c,d]=\{x\in\mathbb{R}\mid c<x\leq d\}$}.
\label{decompinterval}
\end{align}
Note that $\gamma_0=0$, and the last half interval $(\gamma_b,\gamma_{b+1}]$ may be empty.
\begin{Def}\label{invdefstr}
For $\mu_+\in S_p^{[jp+1,jp+h]}$ and $\mu_-\in S_p^{[jp-h,jp-1]}$, the formula
\begin{align}
\begin{split}
  \INVMAPSTR{j}(\mu_+,\mu_-)_{jp\pm i}=\begin{cases}
0 & \textrm{if $i=0$ and $b\in 2\mathbb{Z}$}, \\
1 & \textrm{if $i=0$ and $b\not\in 2\mathbb{Z}$}, \\
1-m_{jp\mp i}(\mu_{\mp}) & \textrm{if $i\in(\gamma_{k},\gamma_{k+1}]$ and $k-b\not\in 2\mathbb{Z}$}, \\
m_{jp\pm i}(\mu_{\pm}) & \textrm{if $i\in(\gamma_{k},\gamma_{k+1}]$ and $k-b\in 2\mathbb{Z}$},
  \end{cases}
\end{split}
\label{opstr}
\end{align}
defines $\INVMAPSTR{j}(\mu_+,\mu_-)\in \STRICT^{[jp-h,jp+h]}$, where $0\leq i\leq h$
and $0\leq k\leq b$.
\end{Def}

\begin{Rem}\label{b01}
We have
$\INVMAPSTR{j}(\mu_+,\mu_-)=\INVMAP{j}(\mu_+,\mu_-)$ if $b=0,1$. 
\end{Rem}

There is an operational description 
for $\INVMAPSTR{j}(\mu_+,\mu_-)$ similar to $\INVMAP{j}(\mu_+,\mu_-)$.

\begin{Ex}
  Take $p$, $j$, $\mu_{\pm}$ as in Example \ref{pict}.
The picture below gives
$\gamma_0=0$, $\gamma_1=4$, $\gamma_2=8$, $\gamma_3=14$, $\gamma_4=17$,
and
\begin{align*}
  \INVMAPSTR{j}(\mu_+,\mu_-)=(44,42,41,40,34,33,32,31,27,26,25,24,23,21,18).
  \end{align*}
\begin{align*}
  \begin{array}{l|ccccccccccccccccc}
    i & 16 & 15 & 14 & 13 & 12 & 11 & 10 & 9 & 8 & 7 & 6 & 5 & 4 & 3 & 2 & 1 & 0 \\ \hline
    b_i & 2 & 3 & 3 & 2 & 1 & 1 & 1 & 1 & 2 & 1 & 0 & 0 & 1 & 0 & -1 & -1 & 0 \\ \hline
    \mu_+ & \circ & \circ & 47 & 46 & \circ & 44 & \circ & \circ & 41 & 40 & \circ & \circ & 37 & 36 & \circ & \circ \\
    \mu_- & \circ & 18 & 19 & 20 & 21 & \circ & 23 & \circ & 25 & 26 & 27 & \circ & 29 & 30 & 31 & \circ \\ \hline
    \phantom{\mu_+} & \circ & \circ & \circ & \circ & \circ & 44 & \circ & 42 & 41 & 40 & \circ & \circ & \circ & \circ & \circ & 34 \\
    \phantom{\mu_-} & \circ & 18 & \circ & \circ & 21 & \circ & 23 & 24 & 25 & 26 & 27 & \circ & \circ & \circ & 31 & 32  
\end{array}
\end{align*}
\end{Ex}

\begin{Prop}\label{oneboxstrict}
Let $j\geq 1$, and assume $\KF_i(\lambda\otimes\mu)=\lambda'\otimes\mu'$
for $\lambda,\lambda'\in S_p^{[jp+1,jp+h]}$, $\mu,\mu'\in S_p^{[jp-h,jp-1]}$ and
$i\in \AII{p}$ (recall \eqref{aidex}).
Then, $\INVMAPSTR{j}(\lambda',\mu')$ is obtained by adding a residue $i$-box to
$\INVMAPSTR{j}(\lambda,\mu)$.
\end{Prop}

\begin{proof}
  We consider 4 cases (\ref{caseX}), (\ref{caseY}), (\ref{caseZ}), (\ref{caseW})
  in the proof of Proposition \ref{onebox}.
  For the case (\ref{caseX}), we need to consider 3 cases: $b=0$, $b\geq 1$ is odd, and $b\geq 1$ is even.
  Because the argument is straightforward and similar to the proof of Proposition \ref{onebox}, we omit a detail.
  The other cases are similar.
\end{proof}



\begin{Lem}\label{nobiru}
For $\mu_+\in S_p^{[jp+1,jp+h]}$ and $\mu_-\in S_p^{[jp-h,jp-1]}$, we have
\begin{enumerate}
\item\label{NAGASAAUX} $\ell(\INVMAP{j}(\mu_+,\mu_-))=\ell(\INVMAPSTR{j}(\mu_+,\mu_-))=\ell(\mu_+)+\ell(\mu_-)-b(\mu_+,\mu_-)$ (we denote it by $\ell)$,
\item\label{SIZEAUX} $|\INVMAP{j}(\mu_+,\mu_-)|=|\INVMAPSTR{j}(\mu_+,\mu_-)|=|\mu_+|+|\mu_-|-b(\mu_+,\mu_-)jp$,
\item\label{PSIAUX} $\INVMAPSTR{j}(\mu_+,\mu_-)_1\geq \INVMAP{j}(\mu_+,\mu_-)_1$ and $\INVMAPSTR{j}(\mu_+,\mu_-)_{\ell}\leq \INVMAP{j}(\mu_+,\mu_-)_{\ell}$ if $\ell\geq 1$.
\end{enumerate}
\end{Lem}

\begin{proof}
Let $b=2b'+1$. Then, the operation for $i\in(\gamma_{k},\gamma_{k+1}]$ in \eqref{opstr}, where $k=0,2,\dots,2b'$, decreases the length by 2 (resp. size by $2jp$).
  Because the part $jp$ is added, the length (resp. size) is decreased by $2(b'+1)-1=b$
  (resp. $bjp$) as
  a total. The case $b\in 2\mathbb{Z}$ is similar.
  This proves (\ref{NAGASAAUX}), (\ref{SIZEAUX}) for $\INVMAPSTR{j}$, and
  proof for $\INVMAP{j}$ is similar.
  To prove (\ref{PSIAUX}), 
  we may assume $\mu_+\ne\varnothing$ since $\INVMAPSTR{j}(\varnothing,\mu_-)=\INVMAP{j}(\varnothing,\mu_-)=\mu_-$. Let $(\mu_+)_1=jp+x$ for $1\leq x\leq h$.
  We may assume $b\geq 2$ by Remark \ref{b01}, and may assume $x=\gamma_b$.
  If there exists $i\in(\gamma_{b-1},\gamma_b]$ such that
    $jp-i\not\in\mu_{-}$, then we have $\INVMAPSTR{j}(\mu_+,\mu_-)_1=\INVMAP{j}(\mu_+,\mu_-)_1$. Otherwise, we have $\INVMAPSTR{j}(\mu_+,\mu_-)_1=jp+\gamma_{b-1}> \INVMAP{j}(\mu_+,\mu_-)_1$. 
    The inequality $\INVMAPSTR{j}(\mu_+,\mu_-)_{\ell}\leq \INVMAP{j}(\mu_+,\mu_-)_{\ell}$ is proved similarly.
\end{proof}

\begin{Prop}\label{injstrict}
The map $\INVMAPSTR{j}:S_p^{[jp+1,jp+h]}\times S_p^{[jp-h,jp-1]}\to \STRICT^{[jp-h,jp+h]}$
is injective.
\end{Prop}

\begin{proof}
For $\mu_+\in S_p^{[jp+1,jp+h]}$ and $\mu_-\in S_p^{[jp-h,jp-1]}$,
take the decomposition \eqref{decompinterval}, and put
$\lambda=\INVMAPSTR{j}(\mu_+,\mu_-)$.
Recall $a_i(\lambda)$ in Definition \ref{magicmap}.
It is easy to see 
\begin{align*}
a_i(\lambda) = \begin{cases}
1 & \textrm{if $i=0$ and $b$ is odd}, \\
0 & \textrm{if $i=0$ and $b$ is even}, \\
b_i(\mu_+,\mu_-)-(k-1) & \textrm{if $i\in(\gamma_{k-1},\gamma_{k}]$ for $1\leq k\leq b+1$ such that $b-k\not\in 2\mathbb{Z}$}, \\
k-b_i(\mu_+,\mu_-) & \textrm{if $i\in(\gamma_{k-1},\gamma_{k}]$ for $1\leq k\leq b+1$ such that $b-k\in 2\mathbb{Z}$},
  \end{cases}
\end{align*}
where $0\leq i\leq h$. 
In particular, we have $\gamma_k=\gamma'_k$ if we define
\begin{align*}
  \gamma'_k = \min(\{\gamma'_{k-1}<i\leq h\mid a_i(\lambda)
  = \PARITY(a_0(\lambda)-k)\}\cup\{h\})
\end{align*}
for $k\geq 1$ with $\gamma'_{k-1}<h$, where $\gamma'_0=0$, and $\PARITY$ of an even (resp. odd)
integer is defined to be 0 (resp. 1).
These imply that $\mu_\pm$ can be recovered from $\lambda$,
similarly to that $\mu_{\pm}$ can be recovered from $\INVMAP{j}(\mu_+,\mu_-)$
as in \S\ref{magicmapsec}.
\end{proof}

\begin{Cor}\label{binj}
  There exists a unique size and length preserving injection $\GOOD_{p,j}:S_p^{[jp-h,jp+h]}\to \STRICT^{[jp-h,jp+h]}$
such that $\INVMAPSTR{j}(\mu_+,\mu_-)=\GOOD_{p,j}(\INVMAP{j}(\mu_+,\mu_-))$
for $\mu_+\in S_p^{[jp+1,jp+h]}$ and $\mu_-\in S_p^{[jp-h,jp-1]}$.
\end{Cor}

\begin{proof}
This is immediate by Theorem \ref{pmbij}, Lemma \ref{nobiru} and Proposition \ref{injstrict}.
\end{proof}

\begin{Rem}\label{s1h}
Note that, by constructions of $\INVMAP{j}$ and $\INVMAPSTR{j}$, we have
$\GOOD_{p,j}= \GOOD_{p,1}^{(j)}$ (see \eqref{betajei}), and 
$\GOOD_{p,1}(\lambda)=\lambda$ for $\lambda\in S_p^{[p+1,p+h]}$.
\end{Rem}

\begin{Def}
  For an odd integer $p\geq 3$, we put $\SCHUR_p=\SCHUR_{p,\GOOD_{p}}$,
  where $\GOOD_p=\GOOD_{p,1}$.
\end{Def}

\begin{Rem}\label{remscpPT}
  Through the bijection $\BETAAA_{\GOOD_{p}}:S_p\ISOM \SCHUR_p$, we regard $\SCHUR_p$ as an
  $A^{(2)}_{p-1}$-crystal, which is isomorphic to $B(\Lambda_0)$.
By Remark \ref{SpScpPT}, we have $\SCHUR_p\PT S_p$.
\end{Rem}

\subsection{Examples}\label{exschur}
By Remark \ref{b01}, we have $\GOODZERO{p}(\lambda)=\lambda$ if $m_{p}(\lambda)\leq 1$.
As shown below, $\SCHUR_{p}$ for $p=3,5$ is nothing but $S_3$ and $\SCHUR_5$ in Theorem \ref{BMOGen}, respectively.

\begin{enumerate}
\item For $p=3$, we have $\GOODZERO{3}=\ID_{S_3^{[2,4]}}$.
\item For $p=5$, we have $\GOODZERO{5}((5,5))=(6,4)$.
Note that $\BETAAA_{\GOOD_{5}}=\BETAA_5$ in \S\ref{expschur}. 
\end{enumerate}

  For $p=7$, we have $\GOODZERO{7}((7,7))=(8,6)$, $\GOODZERO{7}((7,7,7))=(9,7,5)$,
$\GOODZERO{7}((7,7)\sqcup(a))=(9,5)\sqcup(a)$,
$\GOODZERO{7}((7,7)\sqcup(b))=(8,6)\sqcup(b)$, 
where $a=8,6$ and $b=10,9,5,4$. 

\subsection{Properties of $\SCHUR_p$}

\begin{Lem}\label{ichidan}
  For an odd integer $p=2h+1\geq 3$, we have $(\lambda_q,\dots,\lambda_{r})\in\GOOD_{p}(S_p^{[p-h,p+h]})$
  for $(\lambda_1,\dots,\lambda_{\ell})\in\GOOD_{p}(S_p^{[p-h,p+h]})$ and $1\leq q\leq r\leq\ell$.
\end{Lem}

\begin{proof}
  It is enough to show $(\lambda_2,\dots,\lambda_{\ell})\in\GOOD_{p}(S_p^{[p-h,p+h]})$ and
  $(\lambda_1,\dots,\lambda_{\ell-1})\in\GOOD_{p}(S_p^{[p-h,p+h]})$ for 
  $\lambda=(\lambda_1,\dots,\lambda_\ell)=\INVMAPSTR{1}(\mu_+,\mu_-)$,
  where $\mu_+\in \STRICT^{[p+1,p+h]}$ and
  $\mu_-\in \STRICT^{[p-h,p-1]}$,
  if $\ell\geq 1$, i.e., $(\mu_+,\mu_-)\ne(\varnothing,\varnothing)$.
  Because both proof are similar, we show the former.
  We write $\nu'=\nu\setminus\{\nu_1\}$ for a nonempty strict partition $\nu$.
  
  Because of
  $\INVMAPSTR{1}(\varnothing,\mu_-)=\mu_-$, we may assume
  $\mu_+\ne\varnothing$. 
  Take the decomposition \eqref{decompinterval}.
  We may assume $b\geq 1$
  because otherwise we have $\lambda=\mu_+\sqcup\mu_-=\INVMAPSTR{1}(\mu_+,\mu_-)$
  and $\lambda'=(\mu_+)'\sqcup\mu_-=\INVMAPSTR{1}((\mu_+)',\mu_-)$ by Remark \ref{b01}.
  We may also assume
  $\lambda_1<p+\gamma_b$ because otherwise we have
  $\lambda_1>p+\gamma_b$ and $\lambda'=\INVMAPSTR{1}((\mu_+)',\mu_-)$.

  If $\lambda_1=p+\gamma_{b-1}$, then it is easy to see $\lambda'=\INVMAPSTR{1}(\mu^{\circ}_+,\mu^{\circ}_-)$, where (see \S\ref{noco})
  \begin{align*}
    \mu^{\circ}_-=\mu_-\setminus\{p-\gamma_b\},\quad
    \mu^{\circ}_+=
    \begin{cases}
      (\mu_+\setminus\{p+\gamma_b\})\setminus\{p+\gamma_{b-1}\} & \textrm{if $b\geq 2$,}\\
      \mu_+\setminus\{p+\gamma_b\} & \textrm{if $b=1$.}
    \end{cases}
  \end{align*}
  The remaining case is $\lambda_1=p+x$ for some $\gamma_{b-1}<x<\gamma_{b}$.
  Put 
  \begin{align*}
    \mu^{\circ}_-=(\mu_-\setminus\{p-\gamma_b\})\sqcup\{p-x\},\quad
    \mu^{\circ}_+=\mu_+\setminus\{p+\gamma_b\}.
  \end{align*}
    It is not difficult to see $\lambda'=\INVMAPSTR{1}(\mu^{\circ}_+,\mu^{\circ}_-)$.
\end{proof}

\begin{Prop}\label{propgoodbeta}
  For an odd integer $p\geq 3$, $\GOOD_{p}$
  is good (see Definition \ref{gooddef}).
\end{Prop}

\begin{proof}
  It is enough to show $(\lambda_2,\dots,\lambda_{\ell})\in\FSS^{(1)}_{p,\GOOD_{p}}$ 
  and $(\lambda_1,\dots,\lambda_{\ell-1})\in\FSS^{(1)}_{p,\GOOD_{p}}$ for 
  \begin{align*}
    \lambda=(\lambda_1,\dots,\lambda_\ell)=\INVMAPSTR{2}(\mu_+,\mu_-)\sqcup\INVMAPSTR{1}(\nu_+,\nu_-),
  \end{align*}
  where $\mu_+\in \STRICT^{[2p+1,2p+h]}$,
  $\mu_-\in \STRICT^{[2p-h,2p-1]}$,
  $\nu_+\in \STRICT^{[p+1,p+h]}$,
  $\nu_-\in \STRICT^{[p-h,p-1]}$,
  if $\ell\geq 1$. Because both proof are similar, we show the former.

  By Lemma \ref{ichidan}, we may assume $2p-h\leq\lambda_1\leq2p+h$.
  Therefore, the goal is showing, by virtue of Corollary \ref{zigzagtotal},
  $(\mu^{\circ}_-,\nu_+)\in S_p$ assuming
  $(\mu_-,\nu_+)\in S_p$
  for
\begin{align*}
  (\lambda_2,\dots,\lambda_{\ell})=\INVMAPSTR{2}(\mu^{\circ}_+,\mu^{\circ}_-)\sqcup\INVMAPSTR{1}(\nu_+,\nu_-).
\end{align*}

  Take the decomposition \eqref{decompinterval} for $(\mu_+,\mu_-)$.
  By the proof of Lemma \ref{ichidan}, we have 
  \begin{align*}
    \textrm{$\mu^{\circ}_-=\mu_-$\quad
      or\quad
      $\mu^{\circ}_-=\mu_-\setminus\{2p-\gamma_b\}$\quad
      or\quad
    $\mu^{\circ}_-=(\mu_-\setminus\{2p-\gamma_b\})\sqcup\{2p-x\}$,}
  \end{align*}
  where the second and the third equalities hold only if $b\geq 1$, and
  the third equality holds only if $\lambda_1=2p+x$ and $\gamma_{b-1}<x<\gamma_b$.
  In each case, it is easy to see $(\mu^{\circ}_-,\nu_+)\in S_p$ by \eqref{charS} and Proposition \ref{indnum}.
\end{proof}

\begin{Prop}\label{proofPSI}
Any $\lambda\in\SCHUR_p$ satisfies the $p$-Schur inequality. 
\end{Prop}

\begin{proof}
  For $1\leq i\leq\ell(\lambda)-h$, we show $\lambda_i-\lambda_{i+h}\geq p$, and
  the inequality is strict if $\lambda_i\in p\mathbb{Z}$. Let $\lambda'=(\lambda_i,\dots,\lambda_{i+h})$.
  We may assume (A) or (B) below.
  \begin{enumerate}
  \item[(A)] $jp-h\leq\lambda_{i+h}, \lambda_i\leq jp+h$ for some $j\geq 0$.
  \item[(B)] $jp-h\leq\lambda_{i+h}\leq jp+h$ and $(j+1)p-h\leq\lambda_i\leq (j+1)p+h$ for some $j\geq 0$.
  \end{enumerate}
  Note $\lambda'\in\SCHUR_p$ by Proposition \ref{PROPSCHUR}, Corollary \ref{binj}, Remark \ref{s1h} and
  Proposition \ref{propgoodbeta}.
  Assume (A), and take $\mu\in S_p^{[jp-h,jp+h]}$ such that $\GOOD_p^{(j)}(\mu)=\lambda'$.
  By Lemma \ref{nobiru}, we have $\ell(\mu)=h+1$ and
  $\lambda_i-\lambda_{i+h}\geq \mu_1-\mu_{h+1}$, where
  we have the equality if and only if $\lambda_i=\mu_1$ and $\lambda_{i+h}=\mu_{h+1}$.
  Therefore, our goal follows from the $p$-Schur inequality for $\mu$. 
The case (B) is similar.
\end{proof}

\subsection{Proof of Theorem \ref{maintheorem3}}
(\ref{maintheorem3a}) (resp. (\ref{maintheorem3b})) is
the same as Proposition \ref{proofPSI} (resp. Proposition \ref{PROPSCHUR}, Corollary \ref{binj}, Remark \ref{s1h}, Proposition \ref{propgoodbeta}).
For $\lambda\in\SCHUR_p$ and $i\in\AI{p}\setminus\{h\}$, 
$\KF_i\lambda$ is obtained by adding a residue $i$-box to $\lambda$ 
by Proposition \ref{oneboxstrict}, if defined.
It is also valid for $i=h$
by the same argument as that for Theorem \ref{finonebox} (i.e.,
the $\INVMAPSTR{j}$-version of \eqref{onebox3} is clearly
valid 
under the conditions \eqref{onebox1}, \eqref{onebox2}).





\end{document}